\newcommand{\lc}{[\![}
\newcommand{\rc}{]\!]}
\newcommand{\1}[1]{{\boldsymbol 1_{\{#1\}}}}
\newcommand{\oo}{\boldsymbol 1}
\newcommand{\E}{\mathbb{E}}
\newcommand{\F}{\mathbb{F}}
\newcommand{\N}{\mathbb{N}}
\renewcommand{\P}{\mathbb{P}}
\newcommand{\Q}{\mathbb{Q}}
\newcommand{\R}{\mathbb{R}}
\newcommand{\dd}{\mathrm{d}}
\newcommand{\Ecal}{{\mathcal E}}
\newcommand{\Fcal}{{\mathcal F}}
\newcommand{\Gcal}{{\mathcal G}}
\newcommand{\Tcal}{{\mathcal T}}
\theoremstyle{plain}
\newtheorem{theorem}{Theorem}
\newtheorem{corollary}[theorem]{Corollary}
\newtheorem{assumption}[theorem]{Assumption}
\newtheorem{lemma}[theorem]{Lemma}
\theoremstyle{definition}
\newtheorem{definition}[theorem]{Definition}
\theoremstyle{definition}
\newtheorem{remark}[theorem]{Remark}
\newtheorem{important remark}[theorem]{Important remark}
\newtheorem{example}[theorem]{Example}
\numberwithin{equation}{section}
\numberwithin{theorem}{section}
\begin{document}

\title{Convergence of Local Supermartingales\thanks{This paper was part of a preprint titled ``Convergence of local supermartingales and Novikov-Kazamaki type conditions for processes with jumps.'' 
We thank Tilmann Bl\"ummel, Pavel Chigansky, Sam Cohen, Christoph Czichowsky, Freddy Delbaen, Moritz D\"umbgen, Hardy Hulley, Jan Kallsen, Ioannis Karatzas, Kostas Kardaras,  Kasper Larsen,  and Nicolas Perkowski for discussions on the subject matter of this paper.  
We are also very grateful to two anonymous referees for their careful reading and helpful comments. 
}}
\author{
Martin Larsson\thanks{Department of Mathematical Sciences, Carnegie Mellon University, Pittsburgh, Pennsylvania 15213, USA. E-mail: martinl@andrew.cmu.edu}
\and
Johannes Ruf\thanks{Department of Mathematics, London School of Economics and Political Science, Columbia House, Houghton St, London WC2A 2AE, United Kingdom. E-mail:     j.ruf@lse.ac.uk}
}

\date{\today}

\maketitle

\begin{abstract}
We characterize the event of convergence of a local supermartingale. Conditions are given in terms of its predictable characteristics and quadratic variation. The notion of stationarily local integrability plays a key role.

\medskip
{\it R\'esum\'e:} Nous caract\'erisons l'\'ev\'enement de convergence d'une surmartingale locale. Les conditions sont exprim\'ees en termes de ses caract\'eristiques pr\'evisibles et de sa variation quadratique. La notion d'int\'egrabilit\'e stationnairement locale joue un r\^ole cl\'e.

\medskip
{\bf Keywords:} Supermartingale convergence, stationary localization.

{\bf MSC2010 subject classification:} Primary 60G07; secondary: 60G17, 60G44. 
\end{abstract}

\section{Introduction}

Among the most fundamental results in the theory of martingales are the martingale and supermartingale convergence theorems of \citet{Doob:1953}. One of Doob's results states that if $X$ is a nonnegative supermartingale, then $\lim_{t\to\infty}X_t$ exists almost surely. If $X$ is not nonnegative, or more generally fails to satisfy suitable integrability conditions, then the limit need not exist, or may only exist with some probability. One is therefore naturally led to search for convenient characterizations of the event of convergence $D=\{\lim_{t\to\infty}X_t \textnormal{ exists in }\R\}$. An archetypical example of such a characterization arises from the Dambis--Dubins--Schwarz theorem: if~$X$ is a continuous local martingale, then $D=\{[X,X]_{\infty-}<\infty\}$ almost surely. This equality fails in general, however, if $X$ is not continuous, in which case it is natural to ask for a description of how the two events differ. The first main goal of the present paper is to address questions of this type: how can one describe the event of convergence of a process $X$, as well as of various related processes of interest? We do this in the setting where $X$ is a {\em local supermartingale on a stochastic interval $\lc0,\tau\lc$}, where $\tau$ is a foretellable time. (Precise definitions are given below, but we remark already here that every predictable time is foretellable.)

While the continuous case is relatively simple, the general case offers a much wider range of phenomena. For instance, there exist locally bounded martingales $X$ for which both $\lim_{t\to \infty}X_t$ exists in $\R$ and $[X,X]_{\infty-}=\infty$, or for which $\liminf_{t\to\infty}X_t=-\infty$, $\limsup_{t\to\infty}X_t=\infty$, and $[X,X]_{\infty-}<\infty$ hold simultaneously almost surely. We provide a large number of examples of this type. To tame this disparate behavior, some form of restriction on the jump sizes is needed. The correct additional property is that of {\em stationarily local integrability}, which is a modification of the usual notion of local integrability.

Our original motivation for considering questions of convergence came from the study of Novikov--Kazamaki type conditions for a nonnegative local martingale $Z=\Ecal(M)$ to be a uniformly integrable martingale. Here $\Ecal(\cdot)$ denotes the stochastic exponential and $M$ is a local martingale. This problem was originally posed by \citet{Girsanov_1960}, and is of great importance in a variety of applications, for example in mathematical finance. An important milestone is due to \citet{Novikov} who proved that if $M$ is continuous, then $\E[e^{\frac{1}{2}[M,M]_{\infty-}}]<\infty$ implies that $Z$ is a uniformly integrable martingale. 

Let us indicate how questions of convergence arise naturally in this context, assuming for simplicity that $M$ is continuous and $Z$ strictly positive, which is the situation studied by~\citet{Ruf_Novikov}. For any bounded stopping time $\sigma$ we have
\[
\E_\P\left[ e^{ \frac{1}{2} [M,M]_\sigma } \right] = \E_\P\left[Z_\sigma e^{ -M_\sigma + [M,M]_\sigma} \right].
\]
While {\em a priori} $Z$ need not be a uniformly integrable martingale, one can still find a probability measure~$\Q$, sometimes called the {\em F\"ollmer measure}, under which $Z$ may explode, say at time $\tau_\infty$, and such that $\dd \Q / \dd \P |_{\mathcal F_\sigma} = Z_\sigma$ holds for any bounded stopping time $\sigma<\tau_\infty$, see \citet{Perkowski_Ruf_2014}. For such stopping times,
\[
\E_\P\left[ e^{ \frac{1}{2} [M,M]_\sigma } \right] = \E_\Q\left[ e^{X_\sigma} \right],
\]
where $X=-M+[M,M]$ is a local $\Q$--martingale on $\lc0,\tau_\infty\lc$. The key point is that  $Z$ is a uniformly integrable martingale under $\P$ if and only if $\Q(\lim_{t\to\tau_\infty}X_t\textnormal{ exists in }\R)=1$. The role of Novikov's condition is to guarantee that the latter holds. In the continuous case there is not much more to say; it is the extension of this methodology to the general jump case that requires more sophisticated convergence criteria for the process $X$, as well as for certain related processes. Moreover, the fact that $\tau_\infty$ may {\em a priori} be finite explains why we explicitly allow $X$ to be defined on a stochastic interval when we work out the theory. We develop this approach in the companion paper \citet{Larsson:Ruf:NK}, where we formulate a necessary and sufficient Novikov-Kazamaki-type condition.

Besides the literature mentioned in the first paragraph of the introduction, \citet{Chow:1963, Chow:1965}, \citet{Robbins:Siegmund:1971}, \citet{Rao:1979}, and \citet{Kruglov:2008} provide related results. 
In this paper, we focus on almost sure convergence. We do not discuss convergence in probability or distribution, but refer the interested reader to \citet{Baez-Duarte}, \citet{Gilat:1972}, and \citet{Pitman:2015}.

The rest of the paper is organized as follows. Section~\ref{S:prelim} contains notational conventions and mathematical preliminaries. Section~\ref{S:EL} introduces the notion of stationary localization and establishes some general properties. Our main convergence theorems and a number of corollaries are given in Section~\ref{S:convergence}. Section~\ref{S:examp} contains counterexamples illustrating the sharpness of the results obtained in Section~\ref{S:convergence}.

\section{Notation and preliminaries} \label{S:prelim}

In this section we establish some basic notation that will be used throughout the paper. For further details and definitions the reader is referred to \citet{JacodS}.

We work on a stochastic basis $(\Omega,\mathcal F, \mathbb F, \P)$ where $\mathbb F=(\Fcal_t)_{t\ge0}$ is a right-continuous filtration, not necessarily augmented by the $\P$--nullsets.  Given a c\`adl\`ag process $X=(X_t)_{t\ge0}$ we write $X_-$ for its left limits and $\Delta X=X-X_-$ for its jump process, using the convention $X_{0-}=X_0$.  The jump measure of $X$ is denoted by $\mu^X$, and its compensator by $\nu^X$. We let $X^\tau$ denote the process stopped at a stopping time~$\tau$. If $X$ is a semimartingale, $X^c$ denotes its continuous local martingale part, and $H\cdot X$ is the stochastic integral of an $X$--integrable process $H$ with respect to $X$. The stochastic integral of a predictable function $F$ with respect to a random measure $\mu$ is denoted $F*\mu$. For two stopping times $\sigma$ and $\tau$, the stochastic interval $\lc\sigma,\tau\lc$ is the set
\[
\lc\sigma,\tau\lc = \{ (\omega,t)\in\Omega\times\R_+ : \sigma(\omega)\le t<\tau(\omega)\}.
\]
Stochastic intervals such as $\rc\sigma,\tau\rc$ are defined analogously. Note that all stochastic intervals are disjoint from $\Omega\times\{\infty\}$.

A process on a stochastic interval $\lc0,\tau\lc$, where $\tau$ is a stopping time, is a measurable map $X:\lc0,\tau\lc\to\overline\R$. We also view $X$ as a process on $\lc0,\infty\lc$ by setting $X_t=0$ for all $t\ge\tau$. 
In this paper, $\tau$ will be a foretellable time; that is, a $[0,\infty]$--valued stopping time that admits a nondecreasing sequence $(\tau_n)_{n \in \N}$ of stopping times with $\tau_n<\tau$ almost surely for all $n \in \N$ on the event $\{\tau>0\}$ and $\lim_{n \to \infty} \tau_n = \tau$ almost surely. Such a sequence is called an announcing sequence. 

If $\tau$ is a foretellable time and $X$ is a process on $\lc0,\tau\lc$, we say that $X$ is a semimartingale on $\lc0,\tau\lc$ if there exists an announcing sequence $(\tau_n)_{n\in \N}$ for $\tau$ such that $X^{\tau_n}$ is a semimartingale for each $n \in \N$. Local martingales and local supermartingales on $\lc0,\tau\lc$ are defined analogously. Basic notions for semimartingales carry over by localization to semimartingales on stochastic intervals. For instance, if $X$ is a semimartingale on $\lc0,\tau\lc$, its quadratic variation process $[X,X]$ is defined as the process on $\lc0,\tau\lc$ that satisfies $[X,X]^{\tau_n} = [X^{\tau_n}, X^{\tau_n}]$ for each $n \in \N$. Its jump measure~$\mu^X$ and compensator~$\nu^X$ are defined analogously, as are stochastic integrals with respect to $X$ (or $\mu^X$, $\nu^X$, $\mu^X-\nu^X$). In particular, $H$ is called $X$--integrable if it is $X^{\tau_n}$--integrable for each $n\in\N$, and $H\cdot X$ is defined as the semimartingale on $\lc 0, \tau\lc$ that satisfies $(H \cdot X)^{\tau_n} = H \cdot X^{\tau_n}$ for each $n \in \N$. Similarly, $G_{\rm loc}(\mu^X)$ denotes the set of predictable functions $F$ for which the compensated integral $F*(\mu^{X^{\tau_n}}-\nu^{X^{\tau_n}})$ is defined for each $n\in\N$ (see Definition~II.1.27 in~\citet{JacodS}), and $F*(\mu^X-\nu^X)$ is the semimartingale on $\lc0,\tau\lc$ that satisfies $(F*(\mu^X-\nu^X))^{\tau_n}=F*(\mu^{X^{\tau_n}}-\nu^{X^{\tau_n}})$ for all $n\in\N$. One easily verifies that all these notions are independent of the particular sequence $(\tau_n)_{n\in\N}$. We refer to \citet{Maisonneuve1977}, \citet{Jacod_book}, and Appendix~A in~\citet{CFR2011} for further details on local martingales on stochastic intervals.

Since we do not require $\Fcal$ to contain all $\P$--nullsets, we may run into measurability problems with quantities like $\sup_{t<\tau}X_t$ for an optional (predictable, progressive) process $X$ on $\lc0,\tau\lc$. However, the left-continuous process $\sup_{t<\cdot}X_t$ is adapted to the $\P$--augmentation $\overline\F$ of $\F$; see the proof of Theorem~IV.33 in~\citet{Dellacherie/Meyer:1978}. Hence it is $\overline \F$--predictable, so we can find an $\F$--predictable process $U$ that is indistinguishable from it; see Lemma~7 in Appendix~1 of~\citet{Dellacherie/Meyer:1982}. Thus the process $V=U\vee X$ is $\F$-optional (predictable, progressive) and indistinguishable from $\sup_{t\le\cdot}X_t$. When writing the latter, we always refer to the indistinguishable process $V$.

We define the set
\[
\Tcal = \{ \tau : \textnormal{ $\tau$ is a bounded stopping time} \}.
\]
Finally, we emphasize the convention $Y(\omega)\oo_A(\omega)=0$ for all (possibly infinite-valued) random variables~$Y$, events $A\in\Fcal$, and $\omega\in\Omega\setminus A$.

\section{The notion of stationary localization} \label{S:EL}
The following strengthening of the notion of local integrability and boundedness turns out to be very useful. It is a mild variation of the notion of $\gamma$-localization  by~\citet{CS:2005}.

\begin{definition}[Stationarily locally integrable / bounded]  \label{D:stationarily}
Let $\tau$ be a foretellable time and $X$ a progressive process on $\lc0,\tau\lc$. Let $D\in\Fcal$. We call $X$ {\em stationarily locally integrable on $D$} if there exists a nondecreasing sequence $(\rho_n)_{n\in\N}$ of stopping times as well as a sequence $(\Theta_n)_{n \in \N}$ of integrable random variables such that the following two conditions hold almost surely:\begin{enumerate}
\item $\sup_{t\ge0} |X^{\rho_n}_t|\le\Theta_n$ for each $n\in\N$.
\item\label{D:stationarily:ii} $D\subset\bigcup_{n\in\N}\{\rho_n \geq \tau\}$.
\end{enumerate}
If $D=\Omega$, we simply say that $X$ is {\em stationarily locally integrable}. Similarly, we call $X$ {\em stationarily locally bounded (on $D$)} if $\Theta_n$ can be taken deterministic for each $n \in \N$. \qed
\end{definition}

Stationary localization naturally suggests itself when one deals with questions of convergence. The reason is the simple inclusion $D\subset\bigcup_{n\in\N}\{X_t=X^{\rho_n}_t \textnormal{ for all } t \geq 0\}$, where $D$ and $(\rho_n)_{n \in \N}$ are as in Definition~\ref{D:stationarily}. This inclusion shows that to prove that $X$ converges on $D$, it suffices to prove that each $X^{\rho_n}$ converges on $D$. If $X$ is stationarily locally integrable on $D$, one may thus assume when proving such results that $X$ is in fact uniformly bounded by an integrable random variable. This stationary localization procedure will be used repeatedly throughout the paper.

It is clear that a process is stationarily locally integrable if it is stationarily locally bounded.  We now provide some further observations on this strengthened notion of localization.

\begin{lemma}[Properties of stationary localization]  \label{L:ELI}
Let $\tau$ be a foretellable time, $D\in\Fcal$, and $X$ a process on $\lc0,\tau\lc$.
\begin{enumerate}
\item\label{L:ELI:1} If $X=X'+X''$, where $X'$ and $X''$ are stationarily locally integrable (bounded) on $D$, then $X$ is stationarily locally integrable (bounded) on $D$.
\item\label{L:ELI:2} If there exists a nondecreasing sequence $(\rho_n)_{n \in \N}$ of stopping times with $D\subset\bigcup_{n\in\N}\{\rho_n \geq \tau\}$ such that $X^{\rho_n}$ is stationarily locally integrable (bounded) on $D$ for each $n \in \N$, then $X$ is stationarily locally integrable (bounded) on $D$. 
\item\label{L:ELI:3} Suppose $X$ is c\`adl\`ag adapted. Then $\sup_{t<\tau}|X_t|<\infty$ on $D$ and $\Delta X$ is stationarily locally integrable (bounded) on $D$ if and only if $X$ is stationarily locally integrable (bounded) on $D$.
\item\label{L:ELI:5} Suppose $X$ is c\`adl\`ag adapted. Then $x \oo_{x > 1} * \mu^X$ is stationarily locally integrable on $D$ if and only if $x \oo_{x > 1} * \nu^X$ is stationarily locally integrable on $D$.  Any of these two conditions imply that $(\Delta X)^+$ is stationarily locally integrable on $D$.
\item\label{L:ELI:6} Suppose $X$ is optional. If $\sup_{\sigma\in\Tcal}\E[|X_\sigma|\1{\sigma<\tau}]<\infty$ then $X$ is stationarily locally integrable.
\item\label{L:ELI:4} Suppose $X$ is predictable. Then  $\sup_{t<\tau}|X_t|<\infty$ on $D$ if and only if $X$ is stationarily locally bounded on $D$ if and only if $X$ is stationarily locally integrable on $D$.
\end{enumerate}
\end{lemma}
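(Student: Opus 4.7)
Parts (i) and (ii) are structural, proved by explicit construction of witnessing sequences. For (i), if $(\rho'_n,\Theta'_n)$ and $(\rho''_n,\Theta''_n)$ witness stationary local integrability (or boundedness) of $X'$ and $X''$ on $D$, then $\rho_n=\rho'_n\wedge\rho''_n$ and $\Theta_n=\Theta'_n+\Theta''_n$ witness it for $X=X'+X''$, with the covering $D\subset\bigcup_n\{\rho_n\ge\tau\}$ following from monotonicity of both input sequences. For (ii), given outer $(\rho_n)$ and inner $(\sigma_{n,k},\Theta_{n,k})$, I would enumerate $\N\times\N$ via $m\mapsto(n(m),k(m))$ and form the diagonal $\eta_m=\max_{j\le m}(\rho_{n(j)}\wedge\sigma_{n(j),k(j)})$, $\Xi_m=\sum_{j\le m}\Theta_{n(j),k(j)}$. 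At the index $j^\ast$ achieving the maximum one has $|X^{\eta_m}|\le\Theta_{n(j^\ast),k(j^\ast)}\le\Xi_m$, while every pair $(n,k)$ is eventually enumerated, supplying the covering. Both constructions preserve determinism of the $\Theta$'s, handling the bounded case simultaneously.

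For (iii)$(\Leftarrow)$ I use $|\Delta X|\le 2\sup|X|$ directly. For (iii)$(\Rightarrow)$, set $\sigma_n=\inf\{t:|X_t|\ge n\}$, a stopping time since $X$ is c\`adl\`ag adapted; then $|X_{t-}|\le n$ for $t\le\sigma_n$ gives $|X^{\sigma_n}_t|\le n+|\Delta X_{\sigma_n}|$, so combining with the witnessing sequence for $\Delta X$ by further stopping shows each $X^{\sigma_n}$ is stationarily locally integrable (bounded) on $D$, and part~(ii) assembles these using $D\subset\bigcup_n\{\sigma_n\ge\tau\}$ (from the assumed finiteness of $\sup_{t<\tau}|X_t|$ on $D$). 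For (iv), set $Y=x\oo_{x>1}*\mu^X$ and $Z=x\oo_{x>1}*\nu^X$; both are nondecreasing, and $Z$ is the predictable compensator of $Y$. Localizing along an announcing sequence of $\tau$ and applying the identity $\E[Y_\sigma]=\E[Z_\sigma]$ for bounded stopping times $\sigma$ (with monotone convergence for unbounded $\sigma$) shows that stationary local integrability on one side forces integrability of the running supremum of the other along the same $(\rho_n)$, proving the equivalence. The implication to $(\Delta X)^+$ follows from the pathwise bound $(\Delta X_t)^+\le 1+\Delta X_t\,\oo_{\Delta X_t>1}$, which yields $\sup_{t\le\rho_n}(\Delta X_t)^+\le 1+Y_{\rho_n}\le 1+\Theta_n$.

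For (v), the first step is to show $\sup_{t<\tau}|X_t|<\infty$ almost surely via an optional section argument: if this fails with probability $p>0$, then for each $n$ the optional set $\{(\omega,t):t<\tau,\,|X_t|>n\}$ has projection of probability at least $p$, so the optional section theorem produces a stopping time $\sigma_n$ with $|X_{\sigma_n}|>n$ and $\sigma_n<\tau$ on a set of probability at least $p/2$; truncating to $\sigma_n\wedge m\in\Tcal$ and passing $m\to\infty$ forces $n\cdot p/2$ to be bounded by $\sup_{\sigma\in\Tcal}\E[|X_\sigma|\oo_{\sigma<\tau}]$, a contradiction. With finiteness established, $\rho_n=\inf\{t:|X_t|>n\}$ (obtained by optional section) is a nondecreasing family of stopping times; a truncation-plus-Fatou argument yields $\E[|X_{\rho_n}|\oo_{\rho_n<\tau}]<\infty$, and the bound $\sup_t|X^{\rho_n}_t|\le n+|X_{\rho_n}|$ (using $X_t=0$ for $t\ge\tau$) supplies stationary local integrability with $\Theta_n=n+|X_{\rho_n}|$, with covering from the a.s.\ finiteness. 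Part (vi) is largely definitional: SLB$\Rightarrow$SLI is trivial, and SLI$\Rightarrow \sup_{t<\tau}|X_t|<\infty$ on $D$ follows directly from the uniform bound along the witnessing sequence. For the remaining direction, predictability of $X$ makes $\sigma_n=\inf\{t:|X_t|\ge n\}$ a predictable stopping time (d\'ebut of the closed predictable set $\{|X|\ge n\}$) with announcing sequence $\sigma_n^k<\sigma_n$, giving the deterministic bound $|X^{\sigma_n^k}|\le n$, and parts~(i)--(ii) combine these across $n$ and $k$ into a single SLB witnessing sequence covering $D\subset\{\sup_{t<\tau}|X_t|<\infty\}$.

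The principal obstacle, I expect, is the optional section step in part~(v): the section theorem usually demands a $\P$-complete filtration, whereas $\F$ is only right-continuous here, so the $\sigma_n$ and $\rho_n$ produced are a priori $\overline\F$-stopping times and must be replaced by $\F$-stopping times via the measurability detour through $\overline\F$ sketched in the Preliminaries. A related subtlety arises in part~(vi), where identifying $\sigma_n=\inf\{t:|X_t|\ge n\}$ as a predictable $\F$-stopping time (rather than merely an $\overline\F$-predictable stopping time) requires the same care. Beyond these measurability points, the remaining bookkeeping—especially threading the convention $X_t=0$ for $t\ge\tau$ through the expectations $\E[|X_\sigma|\oo_{\sigma<\tau}]$ and the compensator identity $\E[Y_\sigma]=\E[Z_\sigma]$ on the stochastic interval $\lc 0,\tau\lc$—is routine but needs to be done carefully.
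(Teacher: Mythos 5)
Your treatment of parts (i)--(v) is essentially correct and close to the paper's own argument. The one real departure is in (ii): instead of the paper's construction (which, for each outer index $n$, selects a single inner index $m_n$ with $\P(D\cap\{\rho^{(n)}_{m_n}<\rho_n\})\le 2^{-n}$ and then applies Borel--Cantelli to the iteratively defined times $\widehat\rho_n=\rho_n\wedge(\rho^{(n)}_{m_n}\vee\widehat\rho_{n-1})$), you enumerate $\N\times\N$ and take the running maximum $\eta_m$ with the summed bound $\Xi_m$. This works: at the maximizing index $j^\ast$ one indeed has $X^{\eta_m}=X^{\rho_{n(j^\ast)}\wedge\sigma_{n(j^\ast),k(j^\ast)}}$, the $\Xi_m$ are integrable (resp.\ deterministic), and every pair is eventually enumerated, so the covering holds. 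Your version trades the probabilistic index selection for a larger dominating variable; both are fine.

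There is, however, a genuine gap in part (vi). You claim that $\sigma_n=\inf\{t:|X_t|\ge n\}$ is a predictable time because it is ``the d\'ebut of the closed predictable set $\{|X|\ge n\}$.'' Closedness of $[n,\infty)$ in the state space is not the relevant property: the d\'ebut of a predictable set is guaranteed to be predictable only when its graph is contained in the set, and $\{|X|\ge n\}$ need not contain the graph of its d\'ebut. Concretely, take $X=n\,\oo_{\rc\rho,\infty\lc}$ with $\rho$ totally inaccessible; then $X$ is predictable (since $\rc\rho,\infty\lc$ is the complement of the predictable set $\lc 0,\rho\rc$), yet $\sigma_n=\rho$ admits no announcing sequence, so your construction cannot even begin. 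The correct argument --- this is precisely Lemma~I.3.10 in Jacod--Shiryaev, which the paper invokes at this point --- splits $\lc\sigma_n\rc$ into the part lying inside $\{|X|\ge n\}$, which the predictable section theorem shows can be exhausted by predictable times and hence announced, and the part lying outside it, on which $|X^{\sigma_n}|\le n$ already holds and no announcing is needed. A second, smaller defect in the same part: an announcing sequence satisfies $\sigma_n^k<\sigma_n$, so on events where $\tau=\infty$ (or where $\sigma_n=\tau$) none of the $\sigma_n^k$ can satisfy $\sigma_n^k\ge\tau$ and your covering $D\subset\bigcup_{n,k}\{\sigma_n^k\ge\tau\}$ fails; the paper avoids this by first reducing to $\tau<\infty$ via a time change and extending $\sup_{t<\cdot}|X_t|$ beyond $\tau$ by its left limit before localizing. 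With these two repairs, part (vi) goes through, and the remainder of your proposal stands.
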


\begin{proof}
The statement in \ref{L:ELI:1} follows by defining a sequence $(\rho_n)_{n \in \N}$ of stopping times by $\rho_n = \rho_n' \wedge \rho_n''$, where $(\rho_n')_{n \in \N}$ and $(\rho_n'')_{n \in \N}$ localize $X'$ and $X''$ stationarily.  For \ref{L:ELI:2}, suppose without loss of generality that $\rho_n\le\tau$ for all $n\in\N$, and let $(\rho_m^{(n)})_{m \in \N}$ localize $X^{\rho_n}$ stationarily, for each $n \in \N$.  Let $m_n$ be the smallest index such that $\P(D \cap \{\rho_{m_n}^{(n)} < \rho_n\}) \leq 2^{-n}$ for each $n \in \N$. Next, define  $\widehat\rho_0=0$ and then iteratively $\widehat\rho_n = \rho_n \wedge (\rho_{m_n}^{(n)} \vee \widehat\rho_{n-1})$ for each $n \in \N$, and check, by applying Borel-Cantelli, that the sequence $(\widehat \rho_n)_{n \in \N}$ satisfies the conditions of Definition~\ref{D:stationarily}.

For \ref{L:ELI:3} define the sequence $(\rho_n)_{n \in \N}$ of crossing times by $\rho_n=\inf\{t \geq 0:|X_t|\ge n\}$. 
Note also the inequalities $|X^{\rho_n}|\le n+|\Delta X_{\rho_n}|\1{\rho_n<\tau}$ and $|\Delta X^{\rho_n}|\le 2 n+|X^{\rho_n}|$ for each $n \in \N$.  This yields the equivalence for  $X^{\rho_n}$ for each $n \in \N$  and the statement follows by applying \ref{L:ELI:2}.

To prove~\ref{L:ELI:5}, suppose first $x\oo_{x>1}*\mu^X$ is stationarily locally integrable on~$D$. In view of~\ref{L:ELI:2} we may assume by localization that it is dominated by some integrable random variable~$\Theta$, which then yields $\E[x\oo_{x>1}*\nu^X_{\tau-}]\le\E[\Theta]<\infty$. Thus $x\oo_{x>1}*\nu^X$ is dominated by the integrable random variable $x\oo_{x>1}*\nu^X_{\tau-}$, as required. For the converse direction simply interchange $\mu^X$ and $\nu^X$. The fact that $(\Delta X)^+ \leq 1 + x \oo_{x > 1} * \mu^X$ then allows us to conclude.

We now prove \ref{L:ELI:6}, supposing without loss of generality that $X\ge0$. Let $\overline\Fcal$ be the $\P$-completion of $\Fcal$, and write $\P$ also for its extension to $\overline\Fcal$. Define $C=\{\sup_{t<\tau}X_t=\infty\}\in\overline\Fcal$. We first show that $\P(C)=0$, and assume for contradiction that $\P(C)>0$. For each $n\in\N$ define the optional set $O_n=\{t<\tau \textnormal{ and } X_t\ge n\}\subset\Omega\times\R_+$. Then $C=\bigcap_{n\in\N}\pi(O_n)$, where $\pi(O_n)\in\overline \Fcal$ is the projection of $O_n$ onto $\Omega$. The optional section theorem, see Theorem~IV.84 in~\citet{Dellacherie/Meyer:1978}, implies that for each $n\in\N$ there exists a stopping time $\sigma_n$ such that
\begin{equation} \label{eq:L:ELI:section}
\lc\sigma_n\rc\subset O_n \qquad\textnormal{and}\qquad \P\left( \{\sigma_n=\infty\}\cap\pi(O_n) \right) \le \frac{1}{2}\,\P(C).
\end{equation}
Note that the first condition means that $\sigma_n<\tau$ and $X_{\sigma_n}\ge n$ on $\{\sigma_n<\infty\}$ for each $n \in \N$. Thus,
\[
\E[X_{m\wedge\sigma_n}\1{m\wedge\sigma_n<\tau}]
\ge\E[X_{\sigma_n}\oo_{\{\sigma_n\le m\}\cap C}]
\ge n\P(\{\sigma_n<m\}\cap C) \to n\P(\{\sigma_n<\infty\}\cap C)
\]
as $m\to\infty$ for each $n \in \N$. By hypothesis, the left-hand side is bounded by a constant $\kappa$ that does not depend on~$m \in \N$ or~$n \in \N$. Hence, using  that $C\subset\pi(O_n)$ for each $n \in \N$ as well as \eqref{eq:L:ELI:section}, we get
\[
\kappa \ge n\P(\{\sigma_n<\infty\}\cap C) \ge n\Big( \P(C) - \P(\{\sigma_n=\infty\}\cap \pi(O_n))\Big) \ge \frac{n}{2}\,\P(C).
\]
Letting $n$ tend to infinity, this yields a contradiction, proving $\P(C)=0$ as desired. Now define $\rho_n=\inf\{t \geq 0 :X_t\ge n\} \wedge n$ for each $n\in\N$. By what we just proved, $\P(\bigcup_{n\in\N}\{\rho_n\ge\tau\})=1$. Furthermore, for each $n\in\N$ we have $0\le X^{\rho_n}\le n+X_{\rho_n}\1{\rho_n<\tau}$, which is integrable by assumption and an application of Fatou's lemma. Thus $X$ is stationarily locally integrable.

For \ref{L:ELI:4}, let $U=\sup_{t<\cdot}|X_t|$. It is clear that stationarily local boundedness on $D$ implies stationarily local integrability on $D$ implies $U_{\tau-}<\infty$ on $D$. Hence it suffices to prove that $U_{\tau-}<\infty$ on $D$ implies stationarily local boundedness on $D$. To this end, we may assume that $\tau < \infty$, possibly after a change of time. We now define a process $U'$ on $\lc 0, \infty \lc$ by $U' = U \oo_{\lc 0, \tau\lc} + U_{\tau-} \oo_{\lc  \tau, \infty\lc} $, and follow the proof of Lemma~I.3.10 in \citet{JacodS} to conclude.
\end{proof}

An anonymous referee pointed out that the implication in Lemma~\ref{L:ELI}\ref{L:ELI:6} fails if $X$ is not optional, but only progressive. Indeed, IV.91 in~\citet{Dellacherie/Meyer:1978} contains an example of a progressive set $H$ with almost surely uncountable sections, and still containing no graph of a stopping time. Any process of the form $X=Y\oo_H$ for some progressive process $Y$ then satisfies the hypothesis of Lemma~\ref{L:ELI}\ref{L:ELI:6}, but can easily be constructed to fail stationarily local integrability.


\begin{example}
	If $X$ is a uniformly integrable martingale then $X$ is stationarily locally integrable.  This can be seen by considering first crossing times of $|X|$, as in the proof of Lemma~\ref{L:ELI}\ref{L:ELI:3}. \qed 
\end{example}

\section{Convergence of local supermartingales} \label{S:convergence}

In this section we state and prove a number of theorems regarding the event of convergence of a local supermartingale on a stochastic interval. The results are stated in Subsections~\ref{S:conv statements} and~\ref{S:conv locmg}, while the remaining subsections contain the proofs.

\subsection{Convergence results in the general case} \label{S:conv statements}

Our general convergence results will be obtained under the following basic assumption.

\begin{assumption} \label{A:1}
It is assumed that $\tau>0$ be a foretellable time with announcing sequence  $(\tau_n)_{n \in \N}$ and $X = M-A$ a local supermartingale on~$\lc 0,\tau\lc$, where $M$ and $A$ are a local martingale and a nondecreasing predictable process on $\lc 0,\tau\lc$, respectively, both starting at zero. 
\end{assumption}

\begin{theorem}[Characterization of the event of convergence] \label{T:conv}
Suppose Assumption~\ref{A:1} holds and fix $D\in\Fcal$. The following conditions are equivalent:
\begin{enumerate}[label={\rm(\alph{*})}, ref={\rm(\alph{*})}]
\item\label{T:conv:a} $\lim_{t\to\tau}X_t$ exists in $\R$ on $D$ and $(\Delta X)^- \wedge X^-$ is stationarily locally integrable on $D$.
\item\label{T:conv:a'} $\liminf_{t\to\tau}X_t>-\infty$  on $D$ and $(\Delta X)^- \wedge X^-$ is stationarily locally integrable on $D$.
\item\label{T:conv:b'} $X^-$  is stationarily locally integrable on $D$.
\item\label{T:conv:b''} $X^+$  is stationarily locally integrable on $D$ and $A_{\tau-} < \infty$ on $D$.
\item\label{T:conv:e} $X$ is stationarily locally integrable on $D$.
\item\label{T:conv:c} $[X^c,X^c]_{\tau-} + (x^2\wedge|x|)*\nu^X_{\tau-} + A_{\tau-} <\infty$ on $D$.
\item\label{T:conv:f} $[X,X]_{\tau-}< \infty$ on $D$, $\limsup_{t\to\tau}X_t>-\infty$  on $D$, and $(\Delta X)^- \wedge X^-$ is stationarily locally integrable on $D$.
\end{enumerate}
If additionally  $X$ is constant after $\tau_J=\inf\{t \geq 0:\Delta X_t=-1\}$, the above conditions are equivalent to the following condition:
	\begin{enumerate}[resume, label={\rm(\alph{*})}, ref={\rm(\alph{*})}]
		\item\label{T:conv:g} Either $\lim_{t\to\tau}\Ecal(X)_t \textnormal{ exists in } \R\setminus\{0\}$ or  $\tau_J < \tau$ on $D$, and $(\Delta X)^- \wedge X^-$ is stationarily locally integrable on $D$.
	\end{enumerate}
\end{theorem}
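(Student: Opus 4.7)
The plan is to prove the equivalence of conditions (a)--(f) via a cycle of implications, and then treat (g) separately via the Dol\'eans-Dade exponential. The unifying device is stationary localization: whenever a process is SLI on $D$, Definition~\ref{D:stationarily} and Lemma~\ref{L:ELI} let us pass to stopped copies $X^{\rho_n}$ dominated by integrable random variables, apply classical convergence and quadratic-variation estimates, and then transfer conclusions back to $X$ using $D\subset\bigcup_n\{\rho_n\ge\tau\}$.

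The easy implications run as follows. The bound $(\Delta X)^-\wedge X^-\le X^-\le|X|$ gives (e) $\Rightarrow$ (b'), and (a) $\Rightarrow$ (a') is immediate. For (b') $\Rightarrow$ (a), I would stationarily localize $X^-$; each $X^{\rho_n}$ is then a local supermartingale bounded below by an integrable random variable, hence a true supermartingale by Fatou, convergent almost surely by Doob, with the SLI of $(\Delta X)^-\wedge X^-$ inherited from that of $X^-$. For (e) $\Rightarrow$ (b'') one combines (e) $\Rightarrow$ (a) with the identity $A_{\tau-} = \lim_{t\to\tau}(M_t - X_t)$: since $M^{\rho_n}$ is a local martingale bounded below by $-\Theta_n$ it converges a.s., forcing $A_{\tau-}<\infty$ on $D$; the symmetric argument shows (b'') $\Rightarrow$ (a).

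The heart of the argument lies in (a') $\Rightarrow$ (c) and (c) $\Rightarrow$ (e), which close the cycle and also yield (f) $\Rightarrow$ (c) by noting that $[X,X]_{\tau-}<\infty$ bounds $[X^c,X^c]_{\tau-}$ and the small-jump piece $(x^2\oo_{|x|\le1})*\mu^X_{\tau-}$ directly. For (a') $\Rightarrow$ (c) I would stationarily localize $(\Delta X)^-\wedge X^-$ so that negative jumps and negative values of $X^{\rho_n}$ are simultaneously dominated by a common integrable random variable; combined with $\liminf X>-\infty$ this produces a uniform lower bound on $X^{\rho_n}$, and the supermartingale property together with the Burkholder--Davis--Gundy inequality then force
\[
[X^c,X^c]_{\tau-}+(x^2\wedge|x|)*\nu^X_{\tau-}+A_{\tau-}<\infty
\]
on $D$. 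For (c) $\Rightarrow$ (e) I would decompose
\[
M = M^c + h(x)*(\mu^X-\nu^X) + (x-h(x))*(\mu^X-\nu^X), \qquad h(x)=x\oo_{|x|\le 1},
\]
verify SLI of each summand on $D$ using Lemma~\ref{L:ELI}\ref{L:ELI:5} for the large-jump compensated integral, BDG for $M^c$ and the small-jump integral, and Lemma~\ref{L:ELI}\ref{L:ELI:4} for the predictable nondecreasing process $A$ (whose terminal value is finite on $D$), and finally combine using Lemma~\ref{L:ELI}\ref{L:ELI:1}.

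For the equivalence with (g) under the hypothesis that $X$ is constant after $\tau_J$, I would use the Dol\'eans-Dade formula
\[
\Ecal(X)_t = \exp\left(X_t - \tfrac{1}{2}[X^c,X^c]_t\right) \prod_{0<s\le t}(1+\Delta X_s)e^{-\Delta X_s}.
\]
On $\{\tau_J<\tau\}\cap D$ the process is frozen after $\tau_J$, so both the alternative in (g) and condition (a) hold trivially. On $\{\tau_J\ge\tau\}\cap D$ we have $\Delta X>-1$ throughout $\lc 0,\tau\lc$ so each factor in the product lies in $(0,1]$, and by Taylor expansion the product converges to a strictly positive limit if and only if $\sum_s\bigl((\Delta X_s)^2\wedge|\Delta X_s|\bigr)<\infty$, which via Lemma~\ref{L:ELI}\ref{L:ELI:5} is equivalent to $(x^2\wedge|x|)*\nu^X_{\tau-}<\infty$; combined with (a) $\Leftrightarrow$ (c) this yields (a) $\Leftrightarrow$ (g). The main obstacle I foresee is the implication (a') $\Rightarrow$ (c): extracting $L^2$ control of $M$ and finiteness of $A$ from only a one-sided bound on $X$ plus SLI of $(\Delta X)^-\wedge X^-$, since shifting $X$ by a lower-bound constant yields a nonnegative supermartingale that controls $X^-$ but not the quadratic variation directly.
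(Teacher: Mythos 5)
Your overall architecture (stationary localization, supermartingale convergence applied to stopped pieces bounded below by integrable variables, and a decomposition of the martingale part by jump size for (c) $\Rightarrow$ (e)) matches the paper's, and the easy implications are handled correctly. However, there are two genuine gaps. The first concerns (f). You assert that (f) $\Rightarrow$ (c) follows ``directly'' because $[X,X]_{\tau-}<\infty$ bounds $[X^c,X^c]_{\tau-}$ and $x^2\oo_{|x|\le1}*\mu^X_{\tau-}$. But (c) is a statement about the compensator $\nu^X$ and about $A_{\tau-}$, neither of which is controlled by $[X,X]$ alone, and your sketch never uses the hypothesis $\limsup_{t\to\tau}X_t>-\infty$ --- which the paper shows (Example~\ref{E:2}) cannot be dropped: there is a martingale with $[X,X]_{\infty-}<\infty$ almost surely and $\limsup_{t\to\infty}X_t=-\infty$ with positive probability. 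The paper's proof of (f) $\Rightarrow$ (a) is the most delicate step of the theorem: after localizing so that $X=X^\rho$ with $\rho$ the first time of a jump exceeding $\kappa_1$ in modulus, one shows that $X'=X^c+x\oo_{|x|\le\kappa_1}*(\mu^X-\nu^X)$ and $\widehat X=x\oo_{x<-\kappa_1}*(\mu^X-\nu^X)$ converge (the latter via the ad hoc bound $\widehat X^-\le(\Delta X)^-\wedge X^-+\kappa_2$ and supermartingale convergence), and only then does the $\limsup$ condition force the remaining nondecreasing terms in the decomposition $X=X'+\widehat X+x\oo_{x>\kappa_1}*\mu^X-x\oo_{x>\kappa_1}*\nu^X-A$ to converge. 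None of this appears in your proposal, and without it the cycle does not close.

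The second gap is the step you yourself flag as the main obstacle, namely getting (c) out of (a) or (a'). The tool you propose, Burkholder--Davis--Gundy, can deliver $[M,M]_{\tau-}<\infty$ from an $L^1$ bound on $\sup_t|M^{\rho_n}_t|$, but it yields neither $|x|\oo_{|x|>1}*\nu^X_{\tau-}<\infty$ nor $A_{\tau-}<\infty$. The paper's argument is structural rather than inequality-based: after localizing so that $|\Delta X|\le\Theta$ and $X=X^\rho$ for a crossing time $\rho$, the process is bounded below by an integrable variable, so Lemma~\ref{L:SMC} extends it to a supermartingale $\overline X$ on $[0,\infty]$ with canonical decomposition $\overline M-\overline A$; then $A_{\tau-}=\overline A_\infty<\infty$, and since $\overline X$ is a special semimartingale on $[0,\infty]$, Proposition~II.2.29 of Jacod and Shiryaev gives $(x^2\wedge|x|)*\nu^{X}_{\tau-}<\infty$. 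Some such ingredient is indispensable. A smaller point: in your (g) argument you cite Lemma~\ref{L:ELI}\ref{L:ELI:5} to pass from $\sum_{s}\bigl((\Delta X_s)^2\wedge|\Delta X_s|\bigr)<\infty$ to $(x^2\wedge|x|)*\nu^X_{\tau-}<\infty$, but that lemma only relates $x\oo_{x>1}*\mu^X$ and $x\oo_{x>1}*\nu^X$ and does not give this equivalence; the paper instead establishes (a) \& (f) $\Rightarrow$ (g) directly from $\log|\Ecal(X)|=X-[X^c,X^c]/2-(x-\log|1+x|)*\mu^X$ together with $x-\log(1+x)\le x^2$ for $x\ge-1/2$, and (g) $\Rightarrow$ (a') from the resulting lower bound on $X$.
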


\begin{remark} \label{R:4.3}
We make the following observations concerning Theorem~\ref{T:conv}. As in the theorem, we suppose Assumption~\ref{A:1} holds and fix $D\in\Fcal$:
\begin{itemize}
\item For any local supermartingale $X$, the jump process $\Delta X$ is locally integrable. This is however not enough to obtain good convergence theorems as the examples in Section~\ref{S:examp} show. The crucial additional assumption is that localization be in the stationary sense. In Subsections~\ref{A:SS:lack} and \ref{A:SS:one}, several examples are collected that illustrate that the conditions of Theorem~\ref{T:conv} are non-redundant, in the sense that the implications fail for some local supermartingale $X$ if some of the conditions is omitted.
\item If any of the conditions \ref{T:conv:a}--\ref{T:conv:f} holds then $\Delta X$ is stationarily locally integrable on $D$. This is a by-product of the proof of the theorem. The stationarily local integrability of $\Delta X$ also follows, {\em a posteriori}, from Lemma~\ref{L:ELI}\ref{L:ELI:3}.
\item If any of the conditions \ref{T:conv:a}--\ref{T:conv:f} holds and if  $X = M^\prime - A^\prime$ for some local supermartingale $M^\prime$ and some nondecreasing (not necessarily predictable) process $A'$ with $A'_0 = 0$, then $\lim_{t\to\tau} M^\prime_t$ exists in~$\R$ on $D$ and $A^\prime_{\tau-} < \infty$ on $D$. Indeed, $M^\prime \geq X$ and thus the implication {\ref{T:conv:b'}} $\Longrightarrow$  {\ref{T:conv:a}} applied to $M^\prime$ yields that $\lim_{t \to \tau} M^\prime_t$ exists in $\R$, and therefore also $A_{\tau-}^\prime<\infty$.
\item One might conjecture that Theorem~\ref{T:conv} can be generalized to special semimartingales $X=M+A$ on~$\lc0,\tau\lc$ by replacing $A$ with its total variation process ${\rm Var}(A)$ in {\ref{T:conv:b''}} and {\ref{T:conv:c}}. However, such a generalization is not possible in general. As an illustration of what can go wrong, consider the deterministic finite variation process $X_t=A_t=\sum_{n=1}^{[t]}(-1)^n n^{-1}$, where $[t]$ denotes the largest integer less than or equal to~$t$. Then $\lim_{t\to \infty}X_t$ exists in~$\R$, being an alternating series whose terms converge to zero. Thus {\ref{T:conv:a}}--{\ref{T:conv:b'}} \& \ref{T:conv:e} \& \ref{T:conv:f} hold with $D=\Omega$. However, the total variation ${\rm Var}(A)_{\infty-}=\sum_{n=1}^\infty n^{-1}$ is infinite, so {\ref{T:conv:b''}} \& {\ref{T:conv:c}} do not hold with $A$ replaced by ${\rm Var}(A)$. Related questions are addressed by~\citet{CS:2005}.
\item One may similarly ask about convergence of local martingales of the form $X=x*(\mu-\nu)$ for some integer-valued random measure $\mu$ with compensator $\nu$. Here nothing can be said in general in terms of $\mu$ and $\nu$; for instance, if $\mu$ is already predictable then $X=0$.
\qed
\end{itemize}
\end{remark}

Theorem~\ref{T:conv} is stated in a  general form and its power appears when one considers specific events $D \in \mathcal F$.  For example, we may let $D = \{\lim_{t\to\tau}X_t \textnormal{ exists in }\R\}$ or $D = \{\liminf_{t\to\tau}X_t>-\infty\}$.  Choices of this kind lead directly to the following corollary.

\begin{corollary}[Stationarily local integrability from below] \label{C:conv2}
Suppose Assumption~\ref{A:1} holds and $(\Delta X)^- \wedge X^-$ is stationarily locally integrable on $\{ \textnormal{$\limsup_{t\to\tau}X_t>-\infty$} \}$. Then the following events are almost surely equal:
 \begin{align}
\label{T:conv2:1}
&\Big\{ \textnormal{$\lim_{t\to\tau}X_t$ exists in $\R$} \Big\}; \\
&\Big\{ \textnormal{$\liminf_{t\to\tau}X_t>-\infty$} \Big\};\label{T:conv2:6}\\
\label{T:conv2:2}
&\Big\{ \textnormal{$[X^c,X^c]_{\tau-} + (x^2 \wedge |x|) * \nu_{\tau-}^X + A_{\tau-} <\infty$} \Big\}; \\
&\Big\{ \textnormal{$[X,X]_{\tau-} <\infty$} \Big\} \cap \Big\{\textnormal{$\limsup_{t\to\tau}X_t > -\infty$} \Big\}.\label{T:conv2:5}
\end{align}
\end{corollary}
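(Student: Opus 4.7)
The plan is to apply Theorem~\ref{T:conv} four times, once with each of the candidate events in \eqref{T:conv2:1}--\eqref{T:conv2:5} serving as the set $D$. The observation that drives everything is that each of these four events is contained in $D^* := \{\limsup_{t\to\tau}X_t > -\infty\}$: the first, second, and fourth directly from their definitions, and the third a posteriori, as a consequence of the argument below. Because the corollary's hypothesis supplies stationarily local integrability of $(\Delta X)^- \wedge X^-$ on $D^*$, it descends to every subset of $D^*$, and this is what will let me invoke whichever of the theorem's equivalent conditions is most directly visible on each event.

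Concretely, I would first establish \eqref{T:conv2:6} $\subset$ \eqref{T:conv2:1} by taking $D$ equal to the event in \eqref{T:conv2:6}: the hypothesis yields stationarily local integrability on $D \subset D^*$, and $\liminf X_t > -\infty$ holds on $D$ by construction, so condition \ref{T:conv:a'} is in place, and its equivalence with \ref{T:conv:a} delivers the inclusion. The same recipe, with condition \ref{T:conv:f} in place of \ref{T:conv:a'}, handles \eqref{T:conv2:5} $\subset$ \eqref{T:conv2:1}, since on \eqref{T:conv2:5} both $[X,X]_{\tau-} < \infty$ and $\limsup X_t > -\infty$ hold by definition. The cleanest of the three forward inclusions is \eqref{T:conv2:2} $\subset$ \eqref{T:conv2:1}: condition \ref{T:conv:c} is literally the defining condition of \eqref{T:conv2:2}, so Theorem~\ref{T:conv} produces \ref{T:conv:a} immediately, with no appeal to the corollary's hypothesis needed. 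All three reverse inclusions are then obtained uniformly by applying Theorem~\ref{T:conv} once more with $D = $ \eqref{T:conv2:1}: since $\lim_{t\to\tau} X_t \in \R$ on $D$ and the hypothesis applies on $D \subset D^*$, condition \ref{T:conv:a} holds, and reading off the equivalent conditions \ref{T:conv:a'}, \ref{T:conv:c}, and \ref{T:conv:f} yields the reverse inclusions into \eqref{T:conv2:6}, \eqref{T:conv2:2}, and \eqref{T:conv2:5} respectively.

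There is no serious obstacle: the corollary is essentially a repackaging of Theorem~\ref{T:conv} in which the stationarily-local-integrability assumption has been weakened from the specific subset $D$ to the larger set $D^*$, and the only point requiring any care is to check that each event in question sits inside $D^*$, so that this weakened hypothesis still propagates correctly.
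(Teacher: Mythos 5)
Your proposal is correct and coincides with the paper's own (very terse) proof, which likewise just applies Theorem~\ref{T:conv} with $D$ taken to be each of the four events in turn, using that stationarily local integrability on $\{\limsup_{t\to\tau}X_t>-\infty\}$ passes to subsets and that the inclusion \eqref{T:conv2:2} $\subset$ \eqref{T:conv2:1} needs no integrability hypothesis since condition \ref{T:conv:c} carries none. Nothing further is required.
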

\begin{proof}
The statement follows directly from Theorem~\ref{T:conv}, where for each inclusion the appropriate event $D$ is fixed.
\end{proof}

We remark that the identity \eqref{T:conv2:1} $=$ \eqref{T:conv2:6} appears already in Theorem~5.19 of \citet{Jacod_book} under slightly more restrictive assumptions, along with the equality
\begin{equation} \label{eq:XMA}
	\Big\{ \textnormal{$\lim_{t\to\tau}X_t$ exists in $\R$} \Big\} = \Big\{ \textnormal{$\lim_{t\to\tau}M_t$ exists in $\R$} \Big\} \cap \Big\{ A_{\tau-} < \infty \Big\}.
\end{equation}
Corollary~\ref{C:conv2} yields that this equality in fact holds under assumptions strictly weaker than in \citet{Jacod_book}. Note, however, that some assumption is needed; see Example~\ref{ex:semimartingale}. Furthermore, a special case of the equivalence  \ref{T:conv:f}  $\Longleftrightarrow$  \ref{T:conv:g}  in Theorem~\ref{T:conv} appears in Proposition~1.5 of \citet{Lepingle_Memin_Sur}. Moreover, under additional integrability assumptions on the jumps, Section~4 in \citet{Kabanov/Liptser/Shiryaev:1979} provides related convergence conditions. In general, however, we could not find any of the implications  in Theorem~\ref{T:conv}---except, of course, the trivial implication \ref{T:conv:a} $\Longrightarrow$ \ref{T:conv:a'}---in this generality in the literature. Some of the implications are easy to prove, some of them are more involved. Some of these implications were expected, while others were surprising to us; for example, the limit superior in \ref{T:conv:f} is needed even if~$A=0$ so that~$X$ is a local martingale on $\lc0,\tau\lc$. Of course, whenever the stationarily local integrability condition appears, then, somewhere in the corresponding proof, so does a reference to the classical supermartingale convergence theorem, which relies on Doob's upcrossing inequality.

\begin{corollary}[Stationarily local integrability]   \label{C:convergence_QV}
Under Assumption~\ref{A:1}, if $|\Delta X| \wedge |X|$ is stationarily locally integrable we have, almost surely,
\begin{align*}
\Big\{ \textnormal{$\lim_{t\to\tau}X_t$ exists in $\R$} \Big\} = \Big\{ \textnormal{$[X,X]_{\tau-} <\infty$} \Big\} \cap \Big\{ A_{\tau-}<\infty \Big\}.
\end{align*}
\end{corollary}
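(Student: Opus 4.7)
The inclusions will be established separately.

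For $\{\lim_{t\to\tau} X_t\text{ exists in }\R\}\subseteq\{[X,X]_{\tau-}<\infty\}\cap\{A_{\tau-}<\infty\}$, let $D$ denote the left-hand side. Since $(\Delta X)^-\wedge X^-\le|\Delta X|\wedge|X|$, the hypothesis supplies the stationary local integrability needed for Theorem~\ref{T:conv} on $D$. The implication \ref{T:conv:a}$\Rightarrow$\ref{T:conv:b''} yields $A_{\tau-}<\infty$ on $D$, and \ref{T:conv:a}$\Rightarrow$\ref{T:conv:e} together with Lemma~\ref{L:ELI}\ref{L:ELI:3} yields that $X$, and hence $\Delta X$, is stationarily locally integrable on $D$. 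Since $A$ is predictable with $A_{\tau-}<\infty$ on $D$, Lemma~\ref{L:ELI}\ref{L:ELI:4} makes $A$ stationarily locally bounded on $D$, and Lemma~\ref{L:ELI}\ref{L:ELI:1} then makes $M:=X+A$ stationarily locally integrable on $D$. Choosing a stationary localizing sequence $(\rho_n)$ with $\sup_t|M^{\rho_n}_t|\le\Theta_n$ for integrable random variables $\Theta_n$, the Burkholder--Davis--Gundy inequality gives $\E[[M,M]_{\rho_n}^{1/2}]\le c\,\E[\Theta_n]<\infty$, so $[M,M]_{\tau-}<\infty$ on $D$. The identity $[X,X]=[M,M]-2[M,A]+[A,A]$, the bound $[A,A]_{\tau-}\le A_{\tau-}^2<\infty$, and the Kunita--Watanabe inequality $|[M,A]|\le[M,M]^{1/2}[A,A]^{1/2}$ then give $[X,X]_{\tau-}<\infty$ on $D$.

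For the reverse inclusion, set $D_2:=\{[X,X]_{\tau-}<\infty,A_{\tau-}<\infty\}$ and $M:=X+A$. The computation just made shows $[M,M]_{\tau-}\le 2[X,X]_{\tau-}+2A_{\tau-}^2<\infty$ on $D_2$, and since $\Delta A,A\ge0$, we also have $(\Delta M)^-\wedge M^-\le(\Delta X)^-\wedge X^-\le|\Delta X|\wedge|X|$, which is stationarily locally integrable. Applying Corollary~\ref{C:conv2} to the local martingale $M$ (so Assumption~\ref{A:1} holds with vanishing finite-variation part) yields
\[
\{\lim_{t\to\tau}M_t\text{ exists in }\R\}\stackrel{\mathrm{a.s.}}{=}\{[M,M]_{\tau-}<\infty\}\cap\{\limsup_{t\to\tau}M_t>-\infty\}.
\]
On $D_2\cap\{\lim M\text{ exists}\}$, $\lim X=\lim M-A_{\tau-}$ exists in $\R$, so the proof is complete once one shows $\P(D_2\cap\{M_t\to-\infty\})=0$.

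To establish the latter, I would argue by contradiction: if $F_0:=D_2\cap\{M_t\to-\infty\}$ had positive probability, stationary localization of $(\Delta M)^-\wedge M^-$ by $(\rho_n)$ with integrable bounds $(\Theta_n)$ would let us pick $n$ and a constant $K$ such that $F:=F_0\cap\{\rho_n\ge\tau,\Theta_n\le K\}$ has positive probability. On $F$, $M^-\to\infty$ forces $(\Delta M)^-\le K$ eventually, whence at the crossing times $\sigma_N:=\inf\{t:M_t\le-N\}$ (with $N>K$) one has $M_{\sigma_N}\in[-N-K,-N]$. Taking a sequence $(\xi_p)$ localizing $M$ into uniformly integrable martingales and applying optional stopping at the bounded time $\xi_p\wedge\sigma_N$ gives $\E[M_{\xi_p\wedge\sigma_N}]=0$; decomposing over $F$ and $F^c$ and using both $M_{\sigma_N}\le-N$ on $F\cap\{\sigma_N\le\xi_p\}$ and the Jensen-type bound $\E[|M_{\xi_p\wedge\sigma_N}|]\le\E[|M_{\xi_p}|]$ valid for the UI martingale $M^{\xi_p}$, one obtains
\[
N\,\P\bigl(F\cap\{\sigma_N\le\xi_p\}\bigr)\le 2\,\E[|M_{\xi_p}|],
\]
and a careful choice of $p$ (large enough that $\P(F\cap\{\sigma_N\le\xi_p\})$ is close to $\P(F)$) together with $N$ (balanced against the growth of $\E[|M_{\xi_p}|]$) then yields the desired contradiction. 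The main obstacle is executing this balancing act: because stationary localization only provides integrable (not bounded) controls $\Theta_n$, one must carefully coordinate $(\rho_n)$, $(\xi_p)$, $(\sigma_N)$ and the cut-off $K$; every other step reduces mechanically to Theorem~\ref{T:conv}, Corollary~\ref{C:conv2}, and the lemmas of Section~\ref{S:EL}.
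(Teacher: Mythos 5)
Your reductions are sound up to the last step: the forward inclusion is correct (though it follows in one line from \eqref{T:conv2:1} $\subset$ \eqref{T:conv2:2} $\cap$ \eqref{T:conv2:5} in Corollary~\ref{C:conv2}, or from \ref{T:conv:a} $\Longrightarrow$ \ref{T:conv:f} in Theorem~\ref{T:conv}, without any need for Burkholder--Davis--Gundy), and for the reverse inclusion you correctly pass to $M=X+A$, verify $[M,M]_{\tau-}<\infty$ on $D_2$, and isolate the crux: showing $\P(D_2\cap\{M_t\to-\infty\})=0$. But the optional-stopping argument you sketch for that crux is a genuine gap, and not one that can be patched by a more careful ``balancing act.'' Your argument uses only the control on $(\Delta M)^-\wedge M^-$ (the bounds $\Theta_n$, the cut-off $K$ at the downcrossing times $\sigma_N$). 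Example~\ref{E:2} of the paper exhibits a local martingale with $(\Delta X)^-\equiv 0$, $[X,X]_{\infty-}<\infty$ almost surely, and $\P(\lim_{t\to\infty}X_t=-\infty)>0$; for that process every input your contradiction argument invokes is available (with $\Theta_n=0$), yet the conclusion is false. Concretely, the inequality $N\,\P(F\cap\{\sigma_N\le\xi_p\})\le 2\,\E[|M_{\xi_p}|]$ is useless unless $\sup_p\E[|M_{\xi_p}|]<\infty$, i.e.\ unless $M$ is $L^1$--bounded along the localizing sequence --- and in Example~\ref{E:2} one indeed has $\E[|X_t|]\to\infty$, so no choice of $p$ and $N$ closes the argument.

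The missing ingredient is the \emph{positive}-part control, which your hypothesis does provide: since $M^+\le X^+ + A$ and $(\Delta M)^+\le(\Delta X)^+ + \Delta A$, Lemma~\ref{L:ELI}\ref{L:ELI:4} and \ref{L:ELI:1} show that $(\Delta M)^+\wedge M^+=(\Delta(-M))^-\wedge(-M)^-$ is stationarily locally integrable on $\{A_{\tau-}<\infty\}$. This is exactly what the paper exploits: on the problematic set $\{[M,M]_{\tau-}<\infty\}\cap\{\limsup_{t\to\tau}M_t=-\infty\}$ one has $\limsup_{t\to\tau}(-M_t)=+\infty>-\infty$, so the implication \ref{T:conv:f} $\Longrightarrow$ \ref{T:conv:a} of Theorem~\ref{T:conv} applied to the local martingale $-M$ forces $\lim_{t\to\tau}M_t$ to exist in $\R$ there, which makes that set null. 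Replacing your bare-hands stopping argument by this one-line application to $-M$ (together with \eqref{eq:XMA} to pass back from $M$ to $X$) completes the proof.
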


\begin{proof}
 The inclusion ``$\subset$'' is immediate from \eqref{T:conv2:1} $\subset$ \eqref{T:conv2:2} $\cap$ \eqref{T:conv2:5} in Corollary~\ref{C:conv2}. For the reverse inclusion, 
note that $\{[X,X]_{\tau-}<\infty\}=\{[M,M]_{\tau-}<\infty\}$ and $|\Delta M| \wedge |M|$ is stationarily locally integrable on $\{A_{\tau-}<\infty\}$, by Lemma~\ref{L:ELI}\ref{L:ELI:4}. In view of~\eqref{eq:XMA}, it suffices now to show that 
\[
	 \Big\{ \textnormal{$[M,M]_{\tau-} <\infty$} \Big\} \cap  \Big\{ A_{\tau-}<\infty \Big\}\subset \Big\{ \textnormal{$\lim_{t\to\tau}M_t$ exists in $\R$} \Big\}.
\]
To this end, note that
\begin{align*}
\Big\{ \textnormal{$[M,M]_{\tau-} <\infty$} \Big\} &= \left(\Big\{ \textnormal{$[M,M]_{\tau-} <\infty$} \Big\} \cap \Big\{ \limsup_{t\to \tau } M_t > -\infty \Big\}\right)\\
&\qquad \cup \left(\Big\{ \textnormal{$[M,M]_{\tau-} <\infty$} \Big\} \cap \Big\{ \limsup_{t\to \tau } M_t = -\infty \Big\} \cap \Big\{ \limsup_{t\to \tau } (-M_t) > -\infty \Big\}\right).
\end{align*}
We obtain now the desired inclusion by applying the implication  {\ref{T:conv:f}} $\Longrightarrow$  {\ref{T:conv:a}} in Theorem~\ref{T:conv}  once to $M$ and once to $-M$.
\end{proof}

\begin{corollary}[$L^1$--boundedness]    \label{C:conv001}
Suppose Assumption~\ref{A:1} holds, and let $f:\R\to\R_+$ be any nondecreasing function with $f(x)\ge x$ for all sufficiently large $x$. Then the following conditions are equivalent:
\begin{enumerate}[label={\rm(\alph*)},ref={\rm(\alph*)}]
\item\label{T:conv1:a} $\lim_{t\to\tau}X_t$ exists in $\R$  and $(\Delta X)^- \wedge X^-$ is stationarily locally integrable.
\item\label{T:conv1:c} $A_{\tau-}<\infty$ and for some  stationarily locally integrable optional process $U$,
\begin{align}  \label{eq:T:conv:exp}
\sup_{\sigma \in \mathcal{T}} \E\left[f(X_\sigma - U_\sigma)  \oo_{\{\sigma<\tau\}} \right] < \infty.
\end{align}
\item\label{T:conv1:d} For some  stationarily locally integrable optional process $U$, \eqref{eq:T:conv:exp} holds with $x\mapsto f(x)$ replaced by $x\mapsto f(-x)$.
\item\label{C:conv001:e}  The process $\overline X=X\oo_{\lc0,\tau\lc}+(\limsup_{t\to\tau}X_t)\oo_{\lc\tau,\infty\lc}$, extended to $[0,\infty]$ by $\overline X_\infty  = \limsup_{t\to\tau}X_t$,  is a semimartingale on $[0,\infty]$ and $(\Delta X)^- \wedge X^-$ is stationarily locally integrable.
\item\label{C:conv001:d} The process $\overline X=X\oo_{\lc0,\tau\lc}+(\limsup_{t\to\tau}X_t)\oo_{\lc\tau,\infty\lc}$,  extended to $[0,\infty]$ by $\overline X_\infty  = \limsup_{t\to\tau}X_t$,  is a special semimartingale on $[0,\infty]$.
\end{enumerate}
\end{corollary}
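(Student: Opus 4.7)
The plan is to split the equivalences into two blocks: (a)$\Leftrightarrow$(b) and (a)$\Leftrightarrow$(c) are reduced to Theorem~\ref{T:conv} via the growth hypothesis on $f$, while (a)$\Leftrightarrow$(e)$\Leftrightarrow$(d) are handled by extending the canonical decomposition of $X$ across the boundary $\tau$. For (a)$\Rightarrow$(b) and (a)$\Rightarrow$(c), Theorem~\ref{T:conv} shows that (a) forces $X$ to be stationarily locally integrable and $A_{\tau-}<\infty$, so taking the optional process $U=X$ in~\eqref{eq:T:conv:exp} reduces the integrand to the finite constant $f(0)$, trivializing both conditions. For (b)$\Rightarrow$(a), the hypothesis $f\geq 0$ and $f(x)\geq x$ for sufficiently large $x$ yields a constant $K\geq 0$ with $f(x)+K\geq x^+$ for every $x\in\R$, so $\sup_{\sigma\in\mathcal{T}}\E[(X_\sigma-U_\sigma)^+\oo_{\sigma<\tau}]<\infty$. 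The optional process $(X-U)^+$ is therefore stationarily locally integrable by Lemma~\ref{L:ELI}\ref{L:ELI:6}, and combining this with the stationary local integrability of $U^+\leq|U|$ through Lemma~\ref{L:ELI}\ref{L:ELI:1} and the pointwise bound $X^+\leq(X-U)^++U^+$ shows $X^+$ to be stationarily locally integrable; together with $A_{\tau-}<\infty$ this is condition~\ref{T:conv:b''} of Theorem~\ref{T:conv}, delivering (a). The implication (c)$\Rightarrow$(a) is symmetric via $X^-\leq(U-X)^++U^-$ and condition~\ref{T:conv:b'} of Theorem~\ref{T:conv}.

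For (a)$\Rightarrow$(e), I would extend $A$ to $\overline A_t=A_t\oo_{t<\tau}+A_{\tau-}\oo_{t\geq\tau}$, which is predictable (because $\tau$ is foretellable and $A_{\tau-}$ is $\Fcal_{\tau-}$-measurable), nondecreasing, and of finite total variation $A_{\tau-}<\infty$ by Theorem~\ref{T:conv}\ref{T:conv:b''}; setting $\overline M=\overline X+\overline A$ then yields a càdlàg process coinciding with $M=X+A$ on $\lc 0,\tau\lc$ and with the finite limit $\lim_{t\to\tau}M_t$ thereafter. Because $|M|\leq|X|+A$ with both $X$ (by Theorem~\ref{T:conv}\ref{T:conv:e}) and $A$ (by Lemma~\ref{L:ELI}\ref{L:ELI:4}) stationarily locally integrable, Lemma~\ref{L:ELI}\ref{L:ELI:1} shows $M$ to be stationarily locally integrable. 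Combining a localizing sequence $(\rho_n)$ for $M$ with the announcing sequence $(\tau_n)$ produces $\sigma_n=\rho_n\wedge\tau_n<\tau$ along which $\overline M^{\sigma_n}=M^{\sigma_n}$ is a uniformly integrable martingale closed in $L^1$ by $M_{\sigma_n}$; exploiting that $\overline M$ is constant on $\rc\tau,\infty\rc$ together with $\{\rho_n\geq\tau\}\uparrow\Omega$ identifies $\overline M$ as a local martingale on $[0,\infty]$, and the decomposition $\overline X=\overline M-\overline A$ exhibits $\overline X$ as a special semimartingale on $[0,\infty]$.

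For (e)$\Rightarrow$(d), a special semimartingale is in particular a semimartingale, and its canonical pieces $\overline M$ and $\overline B$ are both stationarily locally integrable---$\overline M$ because its localizing uniformly integrable martingales are such (cf.\ the example preceding this corollary), and $\overline B$ because it is predictable with finite total variation (Lemma~\ref{L:ELI}\ref{L:ELI:4})---so by Lemma~\ref{L:ELI}\ref{L:ELI:1} their sum $\overline X$ and hence its restriction $X$ are stationarily locally integrable, whence $(\Delta X)^-\wedge X^-$ is as well. For (d)$\Rightarrow$(a), the càdlàg property of $\overline X$ on $[0,\infty]$ forces $\overline X_{\tau-}=\lim_{t\uparrow\tau}X_t$ to exist in $\R$, and agreement of this left limit with $\overline X_\tau=\limsup_{t\to\tau}X_t$ then yields the existence of $\lim_{t\to\tau}X_t$, which combined with the s.l.i.\ hypothesis gives (a). The main technical obstacle I anticipate is the identification of $\overline M$ as a genuine local martingale on the closed interval $[0,\infty]$ in (a)$\Rightarrow$(e): the stationary localizing times $\sigma_n$ converge to $\tau$ rather than $\infty$, so the argument requires a careful gluing of the uniformly integrable pieces $M^{\sigma_n}$ across the increasing events $\{\rho_n\geq\tau\}$, exploiting that $\overline M$ has already been extended constantly past $\tau$ by its terminal limit.
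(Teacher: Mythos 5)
Your proposal is essentially correct and, for the equivalences \ref{C:conv001}\ref{T:conv1:a}$\Leftrightarrow$\ref{T:conv1:c}$\Leftrightarrow$\ref{T:conv1:d}, coincides with the paper's argument: take $U=X$ for the forward directions, and use $f(x)\ge\1{x\ge\kappa}x^+$ together with Lemma~\ref{L:ELI}\ref{L:ELI:6} for the converses. Your treatment of \ref{T:conv1:d} is in fact a mild simplification: you pass from the stationarily local integrability of $(X-U)^-$ directly to that of $X^-$ and invoke \ref{T:conv:b'}$\Rightarrow$\ref{T:conv:a} of Theorem~\ref{T:conv}, whereas the paper first shows that $M^-$ is stationarily locally integrable, deduces that $M$ converges, and then recovers $A_{\tau-}<\infty$ so as to land in \ref{T:conv1:c}; both routes are valid. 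For the semimartingale conditions the paper proceeds differently: it proves \ref{T:conv1:a}$\Rightarrow$\ref{C:conv001:e} by extending the martingale part to a local martingale on $[0,\infty]$ via Lemma~\ref{L:SMC}, and then obtains \ref{C:conv001:d} from Proposition~II.2.29 of Jacod--Shiryaev combined with \ref{T:conv:a}$\Leftrightarrow$\ref{T:conv:c} of Theorem~\ref{T:conv}, rather than building the canonical decomposition by hand. The latter route also makes \ref{C:conv001:d}$\Rightarrow$\ref{T:conv1:a} immediate, whereas your argument for that step rests on the (plausible but unproved) claim that the local martingale part of a special semimartingale on $[0,\infty]$ is stationarily locally integrable.

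The one place where your argument is genuinely incomplete is the step you yourself flag: upgrading the uniformly integrable martingales $M^{\rho_n\wedge\tau_m}$ to the statement that $\overline M$ is a local martingale on $[0,\infty]$. The diagonal sequence $\sigma_n=\rho_n\wedge\tau_n$ does not do this directly, since $\sigma_n<\tau$ and the integrable bound $\Theta_n$ changes with $n$. The correct order of limits is: fix $n$, let $m\to\infty$ under the domination $|M^{\rho_n\wedge\tau_m}|\le\Theta_n$ (conditional dominated convergence) to conclude that $\overline M^{\rho_n}$ is a uniformly integrable martingale on $[0,\infty]$; only then glue over the increasing events $\{\rho_n\ge\tau\}$, using that $\overline M$ is constant after $\tau$ so that $\overline M^{\rho_n}=\overline M$ on $\{\rho_n\ge\tau\}$. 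This two-step passage is exactly what Lemma~\ref{L:SMC} packages (Fatou for the supermartingale property on $[0,\infty]$, then dominated convergence along first-crossing times of $|\overline X|$), and the paper simply cites it at this point. With that reference supplied in place of the ``anticipated obstacle,'' your proof closes the cycle.
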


\begin{proof}
{\ref{T:conv1:a}} $\Longrightarrow$ {\ref{T:conv1:c}} \& {\ref{T:conv1:d}}: Thanks to the implication {\ref{T:conv:a}} $\Longrightarrow$ {\ref{T:conv:b''}} \& {\ref{T:conv:e}} in Theorem~\ref{T:conv}  we may simply take $U=X$.

{\ref{T:conv1:c}} $\Longrightarrow$ {\ref{T:conv1:a}}:
We have $f(x)\ge \1{x\ge \kappa}x^+$ for some constant $\kappa \geq 0$ and all $x\in\R$. Hence~\eqref{eq:T:conv:exp} holds with $f(x)$ replaced by $x^+$. Lemma~\ref{L:ELI}\ref{L:ELI:6} then implies that $(X-U)^+$ is stationarily locally integrable. Since $X^+\le (X-U)^++U^+$, we have $X^+$ is stationarily locally integrable.
The implication \ref{T:conv:b''} $\Longrightarrow$ \ref{T:conv:a} in Theorem~\ref{T:conv} now yields~\ref{T:conv1:a}.

{\ref{T:conv1:d}} $\Longrightarrow$ {\ref{T:conv1:c}}:
We now have $f(x)\ge \1{x\le -\kappa}x^-$ for some constant $\kappa \geq 0$ and all $x\in\R$, whence as above, $(X-U)^-$ is stationarily locally integrable. Since $M^-\le(M-U)^-+U^-\le(X-U)^-+U^-$, it follows that $M^-$ is stationarily locally integrable. The implication \ref{T:conv:b'} $\Longrightarrow$ \ref{T:conv:a}~\&~\ref{T:conv:e} in Theorem~\ref{T:conv} yields that $\lim_{t\to\tau}M_t$ exists in $\R$ and that $M$ is stationarily locally integrable. Hence $A=(U-X+M-U)^+\le(X-U)^-+|M|+|U|$ is stationarily locally integrable, so Lemma~\ref{L:ELI}\ref{L:ELI:4} yields $A_{\tau-}<\infty$. Thus~{\ref{T:conv1:c}} holds.

\ref{T:conv1:a}   $\Longrightarrow$ \ref{C:conv001:e}:
By \eqref{eq:XMA}, $A$ and $M$ converge. Moreover, since $\Delta M \geq \Delta X$, we have  $(\Delta M)^-$ is stationarily locally integrable by Remark~\ref{R:4.3}, say with localizing sequence $(\rho_n)_{n \in \N}$. Now, it is sufficient to prove that $M^{\rho_n}$ is a local martingale on $[0,\infty]$ for each $n \in \N$, which, however, follows from Lemma~\ref{L:SMC} below.

\ref{C:conv001:e} $\Longrightarrow$ \ref{T:conv1:a}:
Obvious.

\ref{T:conv1:a} \& \ref{C:conv001:e} $\Longleftrightarrow$ \ref{C:conv001:d}:
This equivalence follows from Proposition~II.2.29 in~\citet{JacodS}, in conjunction with the equivalence \ref{T:conv:a}  $\Longleftrightarrow$  \ref{T:conv:c} in Theorem~\ref{T:conv}.
\end{proof}

	Examples~\ref{E:P1} and \ref{ex:semimartingale} below illustrate that the integrability condition is needed in order that \ref{C:conv001}\ref{T:conv1:a} imply the semimartingale property of $X$ on the extended axis. These examples also show that the integrability condition in Corollary~\ref{C:conv001}\ref{C:conv001:e} is not redundant.

\begin{remark} \label{R:bed implied} In Corollary~\ref{C:conv001}, convergence implies not only $L^1$--integrability but also boundedness. Indeed, let $g:\R\to\R_+$ be either $x\mapsto f(x)$ or $x\mapsto f(-x)$. If any of the conditions in Corollary~\ref{C:conv001} holds then there exists a  stationarily locally integrable optional process $U$ such that the family
\begin{align*}
\left(g(X_\sigma - U_\sigma)  \oo_{\{\sigma<\tau\}} \right)_{\sigma \in \mathcal{T}} \qquad \textnormal{is bounded.}
\end{align*}
To see this, note that if {\ref{T:conv1:a}} holds then $X$ is stationarily locally integrable. If $g$ is $x\mapsto f(x)$, let $U=\sup_{t \leq \cdot} X_t$, whereas if $g$ is $x\mapsto f(-x)$, let $U=\inf_{t \leq \cdot} X_t$. In either case, $U$ is  stationarily locally integrable and $(g(X_\sigma-U_\sigma))_{\sigma\in\Tcal}$ is bounded.\qed
\end{remark}

With a suitable choice of $f$ and additional requirements on $U$, condition~\eqref{eq:T:conv:exp} has stronger implications for the tail integrability of the compensator $\nu^X$ than can be deduced, for instance, from Theorem~\ref{T:conv} directly. The following result records the useful case where $f$ is an exponential.

\begin{corollary}[Exponential integrability of~$\nu^X$] \label{C:conv1}
Suppose Assumption~\ref{A:1} holds. If $A_{\tau-}<\infty$ and~\eqref{eq:T:conv:exp} holds with some~$U$ that is stationarily locally bounded and with $f(x)=e^{cx}$ for some $c\ge1$, then
\begin{equation}\label{T:conv:nu}
(e^x-1-x) * \nu^X_{\tau-} < \infty.
\end{equation}
\end{corollary}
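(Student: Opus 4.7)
The plan is to derive the conclusion from the canonical decomposition of $e^X$, obtained via It\^o's formula, and an application of Theorem~\ref{T:conv} to its local-martingale part.

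First I would invoke Corollary~\ref{C:conv001}: since $f(x)=e^{cx}$ satisfies $f(x)\ge x$ for all sufficiently large $x$, condition~\ref{T:conv1:c} is met, hence so is~\ref{T:conv1:a}. Thus $\lim_{t\to\tau}X_t$ exists in $\R$, $X$ is stationarily locally integrable, and by Theorem~\ref{T:conv}\ref{T:conv:c} also $[X^c,X^c]_{\tau-}<\infty$ almost surely. In particular $X_-$ is pathwise bounded on $\lc0,\tau\lc$, so $e^{X_-}$ is bounded above and bounded away from zero there. To upgrade to stationarily local integrability of $e^X$ itself, Lemma~\ref{L:ELI}\ref{L:ELI:6} applied to the nonnegative optional process $e^{c(X-U)}$ (using~\eqref{eq:T:conv:exp}) gives that $e^{c(X-U)}$ is stationarily locally integrable; combining this with the stationary local boundedness of $e^{cU}$ (by taking pointwise minima of the two localizing sequences, as in the proof of Lemma~\ref{L:ELI}\ref{L:ELI:2}) shows that the product $e^{cX}=e^{c(X-U)}e^{cU}$ is stationarily locally integrable. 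Since $c\ge1$, the inequality $e^x\le 1+e^{cx}$ and Lemma~\ref{L:ELI}\ref{L:ELI:1} yield that $e^X$ is stationarily locally integrable.

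Second, I would apply It\^o's formula to $e^X$ on $\lc0,\tau\lc$ and compensate the pure-jump term to obtain the canonical decomposition
\[
e^X = e^{X_0} + N + B^+ - B^-,
\]
where $N$ is a local martingale on $\lc0,\tau\lc$ with $N_0=0$, $B^-=\int e^{X_-}\,\dd A$, and
\[
B^+ = \tfrac12\int e^{X_-}\,\dd[X^c,X^c] + (e^{X_-}(e^x-1-x))*\nu^X
\]
are predictable and nondecreasing (the latter because $e^x-1-x\ge0$). The main technical point is the justification of this decomposition: stopping by the sequence that stationarily localizes $e^X$ turns the stopped process into a special semimartingale bounded by an integrable random variable, so that the nonnegative increasing process $(e^{X_-}(e^x-1-x))*\mu^X$ is locally integrable, its compensator $(e^{X_-}(e^x-1-x))*\nu^X$ exists, and subtracting it produces the jump-martingale contribution to $N$; the pieces paste across the localization to yield the decomposition on $\lc0,\tau\lc$.

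Since $e^{X_-}$ is bounded on $\lc0,\tau\lc$ and $A_{\tau-},[X^c,X^c]_{\tau-}<\infty$, both $B^-_{\tau-}$ and $\tfrac12\int_0^{\tau-}e^{X_-}\,\dd[X^c,X^c]$ are finite almost surely. Setting $Z=e^X+B^-$, I have $Z\ge0$ and $Z-e^{X_0}=N+B^+$. The process $Z$ is stationarily locally integrable by Lemma~\ref{L:ELI}\ref{L:ELI:1}: $e^X$ is so by the first step, and $B^-$ is predictable with $\sup_{t<\tau}B^-_t<\infty$, hence stationarily locally bounded by Lemma~\ref{L:ELI}\ref{L:ELI:4}. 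Since $B^+\ge0$ and $e^{X_0}\ge0$, $N=Z-e^{X_0}-B^+\le Z$, so $N^+\le Z$ is stationarily locally integrable. The local martingale $N$ trivially satisfies Assumption~\ref{A:1} with predictable part $A'=0$, so the implication \ref{T:conv:b''}$\Longrightarrow$\ref{T:conv:a} of Theorem~\ref{T:conv} applied to $N$ gives that $\lim_{t\to\tau}N_t$ exists in $\R$ almost surely. Therefore $B^+_{\tau-}=Z_{\tau-}-e^{X_0}-N_{\tau-}<\infty$, whence $(e^{X_-}(e^x-1-x))*\nu^X_{\tau-}<\infty$, and dividing by the pathwise strictly positive lower bound of $e^{X_-}$ yields~\eqref{T:conv:nu}.
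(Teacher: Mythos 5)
Your argument is correct, but it finishes quite differently from the paper's. Both proofs share the same opening moves: It\^o's formula applied to $e^X$, and Lemma~\ref{L:ELI}\ref{L:ELI:6} combined with~\eqref{eq:T:conv:exp} to obtain stationary local integrability of $e^X$ (the paper first localizes to reduce to $A=U=0$ and, via Jensen, to $c=1$, which you avoid by carrying $U$, $A$ and $c$ through). The divergence is in how finiteness of $(e^{X_-}(e^x-1-x))*\nu^X_{\tau-}$ is extracted. The paper never compensates the jump term: it takes expectations of the It\^o identity along a localizing sequence for the single local martingale $e^{X_-}\cdot X$, converts $\mu^X$ into $\nu^X$ via the defining property of the compensator, bounds the left-hand side uniformly by~\eqref{eq:T:conv:exp}, and concludes by monotone convergence --- an $L^1$ argument. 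You instead assemble the full canonical decomposition $e^X=e^{X_0}+N+B^+-B^-$ and feed the local martingale part $N$ back into Theorem~\ref{T:conv} (\ref{T:conv:b''} $\Rightarrow$ \ref{T:conv:a}) to obtain pathwise convergence of $N$ and hence of $B^+$. Your route is pathwise from the start and reuses the main theorem, but it costs you the justification of the decomposition, which is the one thin spot in your write-up: stationary localization by $(\rho_n)_{n\in\N}$ does not announce $\tau$, so ``pasting across the localization'' does not immediately yield that the compensated jump integral is a local martingale on $\lc0,\tau\lc$. This is repairable --- once the stopped special semimartingales give that $(e^{X_-}(e^x-1-x))*\nu^X$ is finite-valued almost surely on $\bigcup_{n\in\N}\{\rho_n\ge\tau\}=\Omega$, this predictable increasing process is locally bounded, whence the integrand lies in $G_{\rm loc}(\mu^X)$; alternatively you can sidestep the issue entirely by applying Theorem~\ref{T:conv} to each stopped process $N^{\rho_n}$ and taking the union over $\{\rho_n\ge\tau\}$. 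The paper's expectation argument avoids this technicality altogether, which is presumably why it is organized the way it is.
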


\begin{proof}
In view of Lemma~\ref{L:ELI}\ref{L:ELI:4} we may assume by localization that $A=U=0$ and by Jensen's inequality that $c = 1$. Lemma~\ref{L:ELI}\ref{L:ELI:6} then implies that $e^X$ and hence $X^+$ is stationarily locally integrable. Thus by Theorem~\ref{T:conv}, $\inf_{t<\tau} X_t >-\infty$. It\^o's formula yields
\begin{align*}
e^X &= 1 + e^{X_-} \cdot X + \frac{1}{2} e^{X_-}\cdot [X^c,X^c] + \left(e^{X_-}(e^x-1-x)\right)*\mu^X.
\end{align*}
 The second term on the right-hand side is a local martingale on $\lc0,\tau\lc$, so we may find a localizing sequence $(\rho_n)_{n\in\N}$ with $\rho_n<\tau$. Taking expectations and using the defining property of the compensator~$\nu^X$ as well as the associativity of the stochastic integral yield
\[
\E\left[ e^{X_{\rho_n}} \right] = 1+  \E\left[ e^{X_-}\cdot\left(\frac{1}{2} [X^c,X^c] + (e^x-1-x)*\nu^X\right)_{\rho_n} \right]
\]
for each $n \in \N$.
Due to~\eqref{eq:T:conv:exp}, the left-hand side is bounded by a constant that does not depend on~$n \in \N$. We now let~$n$ tend to infinity and recall that $\inf_{t<\tau} X_t >-\infty$ to deduce by the monotone convergence theorem that~\eqref{T:conv:nu} holds.
\end{proof}

\begin{remark}  \label{R:implication}
Stationarily local integrability of $U$ is not enough in Corollary~\ref{C:conv1}.  For example, consider an integrable random variable $\Theta$ with $\E[\Theta]= 0$ and $\E[e^\Theta] = \infty$ and the process $X =  \Theta \oo_{\lc 1, \infty\lc}$ under its natural filtration. Then $X$ is a martingale. Now, with $U = - X$, \eqref{eq:T:conv:exp} holds with $f(x)=e^x$, but $(e^x - 1 - x) * \nu^X_{\infty-} = \infty$. \qed
\end{remark}

\subsection{Convergence results with jumps bounded below} \label{S:conv locmg}

We now specialize to the case where $X$ is a local martingale on a stochastic interval with jumps bounded from below. The aim is to study a related process $Y$, which appears naturally in connection with the nonnegative local martingale $\Ecal(X)$. We comment on this connection below.

\begin{assumption} \label{A:2}
It is assumed that $\tau$ be a foretellable time, and $X$ a local martingale on~$\lc 0,\tau\lc$ with $\Delta X>-1$.  It is moreover assumed that $(x-\log(1+x))*\nu^X$ be finite-valued such that
\begin{equation*}
Y=X^c+\log(1+x)*(\mu^X-\nu^X)
\end{equation*}
is well defined.
\end{assumption}

The significance of the process $Y$ originates with the identity
\begin{align} \label{eq:VV}
\Ecal(X)= e^{Y-V} \quad\textnormal{on $\lc0,\tau\lc$,} \quad \textnormal{where} \quad V=\frac{1}{2}[X^c,X^c]+(x-\log(1+x))*\nu^X.
\end{align}
Thus $Y$ is the local martingale part and $-V$ is the predictable finite variation part of the local supermartingale $\log \Ecal(X)$. The process $V$ is called the {\em exponential compensator} of $Y$, and $Y-V$ is called the {\em logarithmic transform} of $X$. These notions play a central role in~\citet{Kallsen_Shir}.

Observe that the jumps of $Y$ can be expressed as
\begin{equation} \label{eq:DY}
\Delta Y_t = \log(1+\Delta X_t) + \gamma_t,
\end{equation}
where
\begin{align*}
	\gamma_t= - \int \log(1+x) \nu^X(\{t\},\dd x)
\end{align*}
for all $t<\tau$. Jensen's inequality and the fact that $\nu^X(\{t\},\R)\le 1$ imply that $\gamma\ge 0$. If $X$ is quasi-left continuous, then $\gamma\equiv 0$.

In the spirit of our previous results, we now present a theorem that relates convergence of the processes $X$ and $Y$ to the finiteness of various derived quantities.

\begin{theorem}[Joint convergence of a local martingale and its logarithmic transform] \label{T:convYX}
Suppose Assumption~\ref{A:2} holds, and fix $\eta\in(0,1)$ and $\kappa>0$. Then the following events are almost surely equal:
\begin{align}
 	&\Big\{ \lim_{t\to\tau}X_t \textnormal{ exists in }\R\Big\} \cap \Big\{ \lim_{t\to\tau}Y_t \textnormal{ exists in }\R\Big\}   \label{T:convYX:1};\\
	&\Big\{\frac{1}{2} [X^c,X^c]_{\tau-}+ (x - \log(1+x)) * \nu^X_{\tau-} < \infty\Big\} ;     \label{T:convYX:2} \\
 	&\Big\{ \lim_{t\to\tau}X_t \textnormal{ exists in }\R\Big\} \cap \Big\{ -\log(1+x)\oo_{x< -\eta}*\nu^X_{\tau-} < \infty\Big\}  \label{T:convYX:3};\\
	&\Big\{ \lim_{t\to\tau}Y_t \textnormal{ exists in }\R\Big\} \cap   \Big\{x\oo_{x>\kappa} * \nu^X_{\tau-} < \infty\Big\}    .  \label{T:convYX:4}
\end{align}
\end{theorem}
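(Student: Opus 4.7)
The plan is to show the four events are almost surely equal by making \eqref{T:convYX:2} the pivot: I will establish $\eqref{T:convYX:2} \Longleftrightarrow \eqref{T:convYX:1} \Longleftrightarrow \eqref{T:convYX:3}$, and then separately $\eqref{T:convYX:2} \Longleftrightarrow \eqref{T:convYX:4}$. The principal tools will be Corollary~\ref{C:conv2} applied to $X$ (whose negative jumps satisfy $(\Delta X)^- \le 1$) and to $Y$, the identity $\Ecal(X) = e^{Y - V}$, and a handful of pointwise comparisons of the integrands $x-\log(1+x)$, $(-\log(1+x))\oo_{x<-\eta}$ and $x\oo_{x>\kappa}$ on $x>-1$.

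My first step is to record that, for $x > -1$, one has with a constant $C = C(\eta, \kappa)$ the bounds
\begin{align*}
(x^2 \wedge |x|) + (-\log(1+x))\oo_{x < -\eta} + x\oo_{x > \kappa} &\leq C\,(x - \log(1+x)), \\
(x - \log(1+x)) &\leq C\bigl[(x^2 \wedge |x|) + (-\log(1+x))\oo_{x < -\eta}\bigr].
\end{align*}
These are routine consequences of $x - \log(1+x) = x^2/2 + O(x^3)$ near $0$, $x - \log(1+x) \sim x$ as $x \to \infty$, and $x - \log(1+x) = -\log(1+x) + O(1)$ as $x \to -1^+$.

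For the block $\eqref{T:convYX:1} \Leftrightarrow \eqref{T:convYX:2} \Leftrightarrow \eqref{T:convYX:3}$: starting from \eqref{T:convYX:2}, the first inequality gives $[X^c,X^c]_{\tau-} + (x^2 \wedge |x|)*\nu^X_{\tau-} < \infty$, so Theorem~\ref{T:conv}\,\ref{T:conv:c}$\Rightarrow$\ref{T:conv:a} applied to $X$ forces $X_{\tau-}\in\R$. To obtain $Y_{\tau-}\in\R$ I would exploit $\Ecal(X) = e^{Y-V}$: since $\Ecal(X)$ is a nonnegative local martingale, $\Ecal(X)_{\tau-}$ exists in $[0,\infty)$; the stationarily local integrability argument in the spirit of Lemma~\ref{L:ELI}\ref{L:ELI:5}, applied to the nonnegative predictable function $x\mapsto x-\log(1+x)$, transfers $(x-\log(1+x))*\nu^X_{\tau-} < \infty$ to $(x-\log(1+x))*\mu^X_{\tau-} < \infty$, which forces $\prod_{s<\tau}(1+\Delta X_s)e^{-\Delta X_s} > 0$ and hence $\Ecal(X)_{\tau-} > 0$, so $Y_{\tau-} = V_{\tau-} + \log\Ecal(X)_{\tau-} \in \R$. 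Conversely $\eqref{T:convYX:1} \Rightarrow \eqref{T:convYX:3}$ is immediate: Corollary~\ref{C:conv2} yields $\sum(\Delta X_s)^2 < \infty$, so only finitely many jumps satisfy $\Delta X_s < -\eta$, each contributing a finite amount to $(-\log(1+x))\oo_{x<-\eta}*\mu^X_{\tau-}$ (since $\Delta X_s > -1$), and a stationarily local integrability argument propagates the finiteness to $\nu^X$. Finally, $\eqref{T:convYX:3} \Rightarrow \eqref{T:convYX:2}$ combines Corollary~\ref{C:conv2} on $X$ with the second pointwise inequality.

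The remaining equivalence $\eqref{T:convYX:2} \Leftrightarrow \eqref{T:convYX:4}$ has an easy forward direction (the first inequality handles $x\oo_{x>\kappa}*\nu^X_{\tau-}$, and $Y_{\tau-} \in \R$ was already proved). The reverse, $\eqref{T:convYX:4} \Rightarrow \eqref{T:convYX:2}$, will be the main obstacle. Here I plan to apply Corollary~\ref{C:conv2} to $Y$---its hypothesis holds because $Y^-$ is bounded on $\{Y_{\tau-} \in \R\}$---to extract $[X^c, X^c]_{\tau-} = [Y^c, Y^c]_{\tau-} < \infty$ and $\sum_{s<\tau}(\log(1+\Delta X_s) + \gamma_s)^2 < \infty$. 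Translating this information on $Y$'s jumps back to integrability of $x - \log(1+x)$ against $\nu^X$ is the crux. In the quasi-left-continuous case $\gamma \equiv 0$, and $\nu^Y$ is the pushforward of $\nu^X$ under $x\mapsto\log(1+x)$, so the argument closes via the pointwise bound $x-\log(1+x) \leq C\bigl[((\log(1+x))^2 \wedge |\log(1+x)|) + x\oo_{x>\kappa}\bigr]$ on $x>-1$. The genuinely delicate part concerns the countable collection of predictable times $T$ where $\nu^X(\{T\},\cdot)\neq 0$ and $\gamma_T>0$; I would handle these individually, using the martingale identity $\int\log(1+x)\,\nu^X(\{T\},dx) = -\gamma_T$ together with Jensen-type inequalities to bound each atomic contribution to $V$.
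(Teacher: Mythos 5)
Your outline gets the easy inclusions right and correctly isolates $\eqref{T:convYX:4}\Rightarrow\eqref{T:convYX:2}$ as the crux, but two steps do not work as written. First, $\eqref{T:convYX:1}\Rightarrow\eqref{T:convYX:3}$ is not ``immediate'': finiteness of $-\log(1+x)\oo_{x<-\eta}*\mu^X_{\tau-}$ does \emph{not} propagate to $\nu^X$. The transfer between $\mu^X$ and $\nu^X$ in the style of Lemma~\ref{L:ELI}\ref{L:ELI:5} requires \emph{stationarily local integrability} of the $\mu^X$--integral, and for an optional increasing process with unbounded jumps this is strictly stronger than almost sure finiteness (Lemma~\ref{L:ELI}\ref{L:ELI:4} is restricted to predictable processes for exactly this reason; compare also Lemma~\ref{L:ex1}\ref{L:ex1:2}). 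Your argument for this arrow uses only the convergence of $X$, and convergence of $X$ alone does not imply $-\log(1+x)\oo_{x<-\eta}*\nu^X_{\tau-}<\infty$: let $X$ jump at each integer time $n$ by $a_n>0$ with probability $1-p_n$ and by $-1+e^{-e^n}$ with probability $p_n=2^{-n}$, with $a_n\approx 2^{-n}$ fixed by the martingale condition; then almost surely only finitely many negative jumps occur and $X$ converges, yet the compensator integral equals $\sum_n p_n e^n=\infty$ (of course $Y$ diverges there, consistently with the theorem). The correct route is to prove $\eqref{T:convYX:1}\subset\eqref{T:convYX:2}$ first --- on $\eqref{T:convYX:1}$ the implication \ref{T:conv:a}$\Rightarrow$\ref{T:conv:g} of Theorem~\ref{T:conv} gives $\lim_{t\to\tau}\Ecal(X)_t\in\R\setminus\{0\}$, so $V=Y-\log\Ecal(X)$ converges by \eqref{eq:VV} --- and then $\eqref{T:convYX:2}\subset\eqref{T:convYX:3}$ follows from your pointwise inequality.

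Second, the inclusion $\eqref{T:convYX:4}\subset\eqref{T:convYX:2}$ is left essentially unproved. To apply Corollary~\ref{C:conv2} (or Theorem~\ref{T:conv}\ref{T:conv:a}$\Rightarrow$\ref{T:conv:c}) to $Y$ on $\eqref{T:convYX:4}$ you must verify that $(\Delta Y)^-\wedge Y^-$ is stationarily locally integrable there; pathwise finiteness of $\sup_{t<\tau}Y_t^-$ on $\{Y_{\tau-}\in\R\}$ is not enough, since $Y^-$ is optional rather than predictable, and because $(\Delta Y)^+\le(\Delta X)^++\gamma$ one already needs control of $\gamma$ at this stage (this is the content of Lemma~\ref{L:convYX}). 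More seriously, ``handling the predictable atoms individually'' does not address summability: the atomic contribution of $T$ to $V$ is exactly $\gamma_T$, so you must show $\sum_{t<\tau}\gamma_t<\infty$, and finiteness of each $\gamma_T$ says nothing about the sum. The paper's proof needs the genuinely nontrivial fact that $\lim_{t\to\tau}\gamma_t=0$ on $\eqref{T:convYX:4}$ (Lemma~\ref{L:convYX:1}), obtained from a conditional Borel--Cantelli argument (Lemmas~\ref{L:cprob} and~\ref{L:cprob2}) combined with the martingale identity at each predictable time, followed by a quantitative bootstrap in which $\sum_{t}\gamma_t^2$ is absorbed into $x^2\oo_{|x|\le\varepsilon}*\nu^X$ with a constant made small by choosing $\varepsilon$ small. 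Neither ingredient appears in your sketch, and Jensen-type bounds at individual atoms will not produce them. The remainder of your plan (the pointwise comparisons of integrands, $\eqref{T:convYX:2}\Rightarrow\eqref{T:convYX:1}$ via $\Ecal(X)=e^{Y-V}$, and $\eqref{T:convYX:3}\Rightarrow\eqref{T:convYX:2}$ via Corollary~\ref{C:conv2} applied to $X$) is sound and essentially matches the paper.
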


\begin{lemma} \label{L:convYX}
Suppose Assumption~\ref{A:2} holds. For any event $D \in \mathcal F$ with $x\oo_{x>\kappa} * \nu^X_{\tau-} < \infty$ on $D$ for some $\kappa>0$, the following three statements are equivalent:
\begin{enumerate}[label={\rm(\alph{*})}, ref={\rm(\alph{*})}]
\item\label{T:convYX:a} $\lim_{t\to\tau}Y_t$ exists in $\R$ on $D$. 
\item\label{T:convYX:b'} $Y^-$  is stationarily locally integrable on $D$.
\item\label{T:convYX:b''}  $Y^+$  is stationarily locally integrable on $D$.
\end{enumerate}
\end{lemma}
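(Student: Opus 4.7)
My plan is to reduce the statement to Theorem~\ref{T:conv} applied to the local martingale $Y$, for which Assumption~\ref{A:1} holds with $M = Y$ and $A \equiv 0$. Since $A_{\tau-}$ is trivially zero, the equivalence $\ref{T:conv:b'} \Longleftrightarrow \ref{T:conv:b''}$ of Theorem~\ref{T:conv} applied to~$Y$ yields $\ref{T:convYX:b'} \Longleftrightarrow \ref{T:convYX:b''}$ at once, and the implication $\ref{T:conv:b'} \Longrightarrow \ref{T:conv:a}$ gives $\ref{T:convYX:b'} \Longrightarrow \ref{T:convYX:a}$ (and similarly for~$\ref{T:convYX:b''}$). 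The nontrivial direction is therefore $\ref{T:convYX:a} \Longrightarrow \ref{T:convYX:b'}$, and it is here that the hypothesis $x\oo_{x>\kappa}*\nu^X_{\tau-}<\infty$ on $D$ enters.

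For that direction, suppose $\lim_{t\to\tau}Y_t$ exists in $\R$ on $D$, so $\sup_{t<\tau}|Y_t|<\infty$ on $D$. By Lemma~\ref{L:ELI}\ref{L:ELI:3} applied to the c\`adl\`ag adapted process~$Y$, it is enough to prove that $\Delta Y$ is stationarily locally integrable on $D$; then $Y$, and a fortiori $Y^-$, will be stationarily locally integrable on $D$. Writing $\Delta Y_t=\log(1+\Delta X_t)+\gamma_t$ via~\eqref{eq:DY}, I first dispose of the large positive jumps: Lemma~\ref{L:ELI}\ref{L:ELI:4} applied to the predictable, nondecreasing process $x\oo_{x>\kappa}*\nu^X$ yields that this process is stationarily locally bounded on~$D$, and the nonnegative-integrand generalization of Lemma~\ref{L:ELI}\ref{L:ELI:5} (whose proof transfers verbatim) then shows that $x\oo_{x>\kappa}*\mu^X$, and hence also $\log(1+x)\oo_{x>\kappa}*\mu^X$ via $\log(1+x)\le x$, is stationarily locally integrable on $D$. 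Consequently the local martingale $W := \log(1+x)\oo_{x>\kappa}*(\mu^X-\nu^X)$ is stationarily locally integrable on $D$, and $Z := Y-W$ is a local martingale that converges in $\R$ on $D$ with jumps $\Delta Z_t=\log(1+\Delta X_t)\oo_{\Delta X_t\le\kappa}+\gamma'_t$, where $\gamma'_t:=-\int\log(1+x)\oo_{x\le\kappa}\,\nu^X(\{t\},dx)\ge 0$ differs from~$\gamma$ by a stationarily locally bounded process on~$D$.

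The main obstacle is to control the predictable quantity~$\gamma$ and the extreme negative jumps $-\log(1+\Delta X)\oo_{\Delta X<-\eta}$ (for any chosen $\eta\in(0,1)$); both pieces reduce to the single finiteness claim $(x-\log(1+x))\oo_{-1<x<0}*\nu^X_{\tau-}<\infty$ on $D$. Indeed, the local-martingale balance $\int x\,\nu^X(\{t\},dx)=0$ combined with the decomposition $-\log(1+x)=(x-\log(1+x))+|x|$ for $x\in(-1,0)$ yields the pointwise bound
\[
\gamma_t\;\le\;\Delta\bigl((x-\log(1+x))\oo_{-1<x<0}*\nu^X\bigr)_t+\Delta\bigl(x\oo_{x>\kappa}*\nu^X\bigr)_t+\kappa,
\]
so the claimed finiteness gives $\sup_{t<\tau}\gamma_t<\infty$ on $D$ and hence stationarily local boundedness of~$\gamma$ on $D$ via Lemma~\ref{L:ELI}\ref{L:ELI:4}; the same finiteness combined with the nonnegative-integrand analogue of Lemma~\ref{L:ELI}\ref{L:ELI:5} makes $-\log(1+\Delta X)\oo_{\Delta X<-\eta}$ stationarily locally integrable on $D$. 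The extraction of this central finiteness from the hypotheses is the most delicate step of the argument; I plan to exploit the identity $\mathcal{E}(X)=e^{Y-V}$ with $V:=\frac12[X^c,X^c]+(x-\log(1+x))*\nu^X$ nondecreasing, together with the fact that $\mathcal{E}(X)$ is a strictly positive local martingale (whence its limit exists in $[0,\infty)$), which combined with the convergence of~$Y$ on~$D$ should constrain the growth of~$V$ on~$D$. Once this finiteness is in hand, combining the bounds on the large positive jumps (hypothesis), on the bounded range $\Delta X\in[-\eta,\kappa]$ (trivial), on the large negative jumps (the Lemma~\ref{L:ELI}\ref{L:ELI:5} analogue), and on~$\gamma$ (Lemma~\ref{L:ELI}\ref{L:ELI:4}) yields $\Delta Y$ stationarily locally integrable on~$D$, completing the proof.
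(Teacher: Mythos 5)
Your reduction of the easy directions to Theorem~\ref{T:conv} applied to the local martingale $Y$ (with $A\equiv 0$) is correct, and your identification of \ref{T:convYX:a} $\Longrightarrow$ \ref{T:convYX:b'} as the real content matches the paper. But the core of your argument rests on the unproven claim that $(x-\log(1+x))\oo_{-1<x<0}*\nu^X_{\tau-}<\infty$ on $D$, and this is where the proposal breaks down. That finiteness is essentially the inclusion \eqref{T:convYX:4}~$\subset$~\eqref{T:convYX:2} of Theorem~\ref{T:convYX} --- the main result that the present lemma is a stepping stone towards --- so deriving it here is circular unless you supply an independent proof. The route you sketch, via $\Ecal(X)=e^{Y-V}$ and convergence of the nonnegative local martingale $\Ecal(X)$, does not deliver it: a nonnegative local martingale converges, but its limit may be zero, and $\Ecal(X)_{\tau-}=0$ together with $Y$ convergent is exactly the configuration in which $V_{\tau-}=\infty$. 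Ruling out a zero limit on $D$ is not a soft consequence of positivity; it is the hard content of the theorem.

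The paper sidesteps this entirely by asking for less. It does not prove that $\Delta Y$ is stationarily locally integrable on $D$ (which forces you to control the large negative jumps $-\log(1+\Delta X)\oo_{\Delta X<-\eta}$, hence the exponential compensator); it only proves that $(\Delta(-Y))^-=(\Delta Y)^+$ is, and then applies Theorem~\ref{T:conv} to $-Y$, whose condition \ref{T:conv:a} only involves $(\Delta(-Y))^-\wedge(-Y)^-$. Since $(\Delta Y)^+\le(\Delta X)^++\gamma$, the only genuinely delicate object is $\gamma$, and the paper controls it by the dedicated Lemma~\ref{L:convYX:1} ($\gamma_t\to 0$ on~\eqref{T:convYX:4}), proved by the conditional Borel--Cantelli argument of Lemmas~\ref{L:cprob} and~\ref{L:cprob2} together with Lemma~\ref{L:BJ} --- not by finiteness of $(x-\log(1+x))*\nu^X_{\tau-}$. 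If you want to complete your proof, you should either restrict to $(\Delta Y)^+$ and find an independent argument that $\gamma$ is stationarily locally bounded on $D$ (this is the missing ingredient, and it does require a real argument of the type in Lemma~\ref{L:convYX:1}), or accept that your stronger finiteness claim needs the full machinery of Theorem~\ref{T:convYX} and hence cannot be used at this stage.
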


\begin{proof}
The implications follow  from Theorem~\ref{T:conv}.  Only that \ref{T:convYX:a} implies \ref{T:convYX:b'} \& \ref{T:convYX:b''} needs an argument, and it suffices to show that $(\Delta (-Y))^-$ is stationarily locally integrable on $D$. By \eqref{eq:DY} we have $(\Delta(-Y))^-\le(\Delta X)^++\gamma$; Lemma~\ref{L:ELI}\ref{L:ELI:5} implies that $(\Delta X)^+$ is stationarily locally integrable; and Lemma~\ref{L:convYX:1} below and Lemma~\ref{L:ELI}\ref{L:ELI:4} imply that $\gamma$ is stationarily locally bounded on~$D$.
\end{proof}

\begin{corollary}[$L^1$--boundedness] \label{C:convYX2}
Suppose Assumption~\ref{A:2} holds and fix $c\ne0$, $\eta\in(0,1)$, and $\kappa>0$. The following conditions are equivalent:
\begin{enumerate}[label={\rm(\alph*)},ref={\rm(\alph*)}]
\item \label{T:convYX2:a} $\lim_{t\to\tau}X_t$ exists in $\R$ and $-\log(1+x)\oo_{x< -\eta}*\nu^X_{\tau-} < \infty$.
\item\label{T:convYX2:b} $x\oo_{x>\kappa} * \nu^X_{\tau-}< \infty$ and for some stationarily locally integrable optional process $U$ on $\lc0,\tau\lc$ we have
\begin{equation}  \label{eq:T:convYX:eYp}
\sup_{\sigma \in \mathcal{T}} \E\left[e^{cY_\sigma - U_\sigma}  \1{\sigma<\tau} \right] < \infty.
\end{equation}
\end{enumerate}
If $c\ge 1$, these conditions are implied by the following:
\begin{enumerate}[label={\rm(\alph*)},ref={\rm(\alph*)},resume]
\item\label{T:convYX2:c} \eqref{eq:T:convYX:eYp} holds for some stationarily locally bounded optional process $U$ on $\lc0,\tau\lc$.
\end{enumerate}
Finally, the conditions \ref{T:convYX2:a}--\ref{T:convYX2:b} imply that $(e^{cY_\sigma-U_\sigma})_{\sigma\in\Tcal}$ is bounded for some stationarily locally integrable optional process $U$ on $\lc0,\tau\lc$.
\end{corollary}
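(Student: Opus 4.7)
The proof naturally splits into three parts: \ref{T:convYX2:a}$\Rightarrow$\ref{T:convYX2:b} together with the final pathwise boundedness claim, the converse \ref{T:convYX2:b}$\Rightarrow$\ref{T:convYX2:a}, and \ref{T:convYX2:c}$\Rightarrow$\ref{T:convYX2:b} under $c\ge 1$. In each case, my plan is to rely heavily on Theorem~\ref{T:convYX} and Lemma~\ref{L:convYX} to move between convergence of $Y$ and stationarily local integrability of its positive and negative parts.

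For \ref{T:convYX2:a}$\Rightarrow$\ref{T:convYX2:b} and the final claim, I would first use the equalities \eqref{T:convYX:3}$=$\eqref{T:convYX:1}$=$\eqref{T:convYX:4} in Theorem~\ref{T:convYX} to extract that $\lim_{t\to\tau}Y_t$ exists in $\R$ and $x\oo_{x>\kappa}*\nu^X_{\tau-}<\infty$, then invoke Lemma~\ref{L:convYX} to obtain both $Y^+$ and $Y^-$ (hence $|Y|$) stationarily locally integrable. I would then set $U=c\sup_{t\le\cdot}Y_t$ when $c>0$ and $U=c\inf_{t\le\cdot}Y_t$ when $c<0$. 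This $U$ is optional by the discussion in Section~\ref{S:prelim}, and the bound $|U|\le|c|\sup_{t\le\cdot}|Y_t|$ transfers stationarily local integrability from $|Y|$ to $U$. By construction $cY\le U$, so $e^{cY_\sigma-U_\sigma}\oo_{\sigma<\tau}\le 1$ pointwise, which simultaneously delivers the sup estimate in~\ref{T:convYX2:b} and the final boundedness claim.

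For the converse \ref{T:convYX2:b}$\Rightarrow$\ref{T:convYX2:a}, Lemma~\ref{L:ELI}\ref{L:ELI:6} applied to the optional process $e^{cY-U}$ yields its stationarily local integrability. Since $(cY-U)^+=\log^+(e^{cY-U})\le e^{cY-U}$ and $U^+$ is stationarily locally integrable, it follows that $(cY)^+$ is stationarily locally integrable. Depending on the sign of~$c$, this localizes either $Y^+$ or $Y^-$, and Lemma~\ref{L:convYX} (whose hypothesis $x\oo_{x>\kappa}*\nu^X_{\tau-}<\infty$ is part of~\ref{T:convYX2:b}) then forces $\lim_{t\to\tau}Y_t$ to exist in $\R$. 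The equality \eqref{T:convYX:1}$=$\eqref{T:convYX:3} in Theorem~\ref{T:convYX} completes~\ref{T:convYX2:a}.

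For \ref{T:convYX2:c}$\Rightarrow$\ref{T:convYX2:b} with $c\ge 1$, I would apply Corollary~\ref{C:conv1} to $Y$ (viewed as $Y-0$ in the sense of Assumption~\ref{A:1}) to conclude $(e^x-1-x)*\nu^Y_{\tau-}<\infty$. The sup estimate in~\ref{T:convYX2:b} is exactly~\ref{T:convYX2:c}, so only $x\oo_{x>\kappa}*\nu^X_{\tau-}<\infty$ remains. By~\eqref{eq:DY} and $\gamma\ge 0$, at every jump $s$ with $\Delta X_s>\kappa$ one has $\Delta Y_s>\log(1+\kappa)$ and $\Delta X_s\le e^{\Delta Y_s}-1$; hence
\[
x\oo_{x>\kappa}*\mu^X\le(e^y-1)\oo_{y>\log(1+\kappa)}*\mu^Y\le C_\kappa(e^y-1-y)\oo_{y>0}*\mu^Y,
\]
with $C_\kappa<\infty$ given by the bounded ratio $(e^y-1)/(e^y-1-y)$ on $[\log(1+\kappa),\infty)$. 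Since the predictable process $(e^x-1-x)*\nu^Y$ is finite at~$\tau-$, Lemma~\ref{L:ELI}\ref{L:ELI:4} yields stationarily local boundedness, and the argument of Lemma~\ref{L:ELI}\ref{L:ELI:5}, which goes through verbatim for any nonnegative predictable integrand, lifts this to stationarily local integrability of $(e^y-1-y)\oo_{y>0}*\mu^Y$, and therefore of $x\oo_{x>\kappa}*\mu^X$ by the above display. One further application of the generalized Lemma~\ref{L:ELI}\ref{L:ELI:5} (this time in the $\mu\Rightarrow\nu$ direction) delivers $x\oo_{x>\kappa}*\nu^X_{\tau-}<\infty$, completing~\ref{T:convYX2:b}. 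The main obstacle I anticipate is precisely this last step: relating $\mu^Y$ and $\mu^X$ in the presence of the predictable correction $\gamma$ coming from atoms of $\nu^X$. The pathwise inequality $\Delta X\le e^{\Delta Y}-1$ on $\{\Delta X>\kappa\}$, available exactly because $\gamma\ge 0$, neatly sidesteps any explicit pushforward description of $\nu^Y$ in terms of $\nu^X$.
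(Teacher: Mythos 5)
Your proposal is correct and follows essentially the same route as the paper: the equivalence of \ref{T:convYX2:a} and \ref{T:convYX2:b} via Theorem~\ref{T:convYX} and Lemma~\ref{L:convYX} (you merely inline the content of Corollary~\ref{C:conv001}, i.e.\ Lemma~\ref{L:ELI}\ref{L:ELI:6} and the running sup/inf construction of Remark~\ref{R:bed implied}, instead of citing it), and \ref{T:convYX2:c}$\Rightarrow$\ref{T:convYX2:b} via Corollary~\ref{C:conv1} applied to $Y$ together with the pathwise jump comparison from \eqref{eq:DY} using $\gamma\ge0$. The only cosmetic difference is that the paper passes from $\nu^Y$ to $\nu^X$ by a direct expectation bound after localization, whereas you route through a (valid) generalization of Lemma~\ref{L:ELI}\ref{L:ELI:5}.
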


\begin{proof}
The equivalence of \ref{T:convYX2:a} and \ref{T:convYX2:b} is obtained from \eqref{T:convYX:3} = \eqref{T:convYX:4} in Theorem~\ref{T:convYX}. Indeed, Corollary~\ref{C:conv001} with $X$ replaced by $Y$ and $f(x)=e^{cx}$, together with Lemma~\ref{L:convYX}, yield that \ref{T:convYX2:b} holds if and only if \eqref{T:convYX:4} has full probability. In order to prove that \ref{T:convYX2:c} implies~\ref{T:convYX2:b}  we assume that~\eqref{eq:T:convYX:eYp} holds with $c\geq 1$ and $U$ stationarily locally bounded. Corollary~\ref{C:conv1} yields
\[
\left(1-\frac{1}{\kappa}\log (1+\kappa)\right)\, (e^y-1)\oo_{y>\log (1+\kappa)}*\nu^Y_{\tau-}\le(e^y-1-y)*\nu^Y_{\tau-}<\infty,
\]
so by a localization argument using Lemma~\ref{L:ELI}\ref{L:ELI:4} we may assume that $(e^y-1)\oo_{y>\log (1+\kappa)}*\nu^Y_{\tau-}\le \kappa_1$ for some constant $\kappa_1>0$. Now, \eqref{eq:DY} yields
\[
\Delta X \1{\Delta X>\kappa} = \left(e^{\Delta Y - \gamma}-1\right)\1{e^{\Delta Y}>(1+\kappa)e^\gamma} \le (e^{\Delta Y}-1)\1{\Delta Y>\log (1+\kappa)},
\]
whence $\E[x\oo_{x>\kappa}*\nu^X_{\tau-}]\le \E[(e^y-1)\oo_{y>\log (1+\kappa)}*\nu^Y_{\tau-}]\le\kappa_1$. Thus~\ref{T:convYX2:b} holds. The last statement of the corollary follows as in Remark~\ref{R:bed implied} after recalling Lemma~\ref{L:convYX}.
\end{proof}

\subsection{Some auxiliary results}
In this subsection, we collect some observations that will be useful for the proofs of the convergence theorems of the previous subsection.

\begin{lemma}[Supermartingale convergence] \label{L:SMC}
Under Assumption~\ref{A:1}, suppose $\sup_{n \in \N} \E[X^-_{\tau_n}]<\infty$. Then the limit $G=\lim_{t\to\tau}X_t$ exists in~$\R$ and the process $\overline X=X\oo_{\lc0,\tau\lc} + G\oo_{\lc\tau,\infty\lc}$, extended to $[0,\infty]$ by $\overline X_\infty=G$, is a supermartingale on $[0,\infty]$ and stationarily locally integrable. If, in addition, $X$ is a local martingale on $\lc 0,\tau\lc$ then $\overline X$ is a local martingale.
\end{lemma}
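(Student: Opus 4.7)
The plan is to first establish the almost-sure limit via Doob's upcrossing inequality applied to each $X^{\tau_n}$, then deduce the extension of $\overline X$ to a supermartingale on $[0,\infty]$ using the decomposition $X=M-A$ from Assumption~\ref{A:1}, and finally verify stationarily local integrability via crossing times. Since $x\mapsto x^-$ is convex and nonincreasing, $X^-$ is a submartingale on $\lc0,\tau\lc$, so optional stopping gives $\sup_t\E[(X^{\tau_n}_t)^-]\le\E[X^-_{\tau_n}]\le C:=\sup_n\E[X^-_{\tau_n}]<\infty$. Applied to each supermartingale $X^{\tau_n}$ on $[0,\infty)$, Doob's upcrossing inequality bounds the expected number of upcrossings of any $[a,b]$ uniformly in $n$; since these are nondecreasing in $n$, monotone convergence over a countable dense family of intervals shows that $G=\lim_{t\to\tau}X_t$ exists in $\overline\R$ almost surely. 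Since $X_0=0$, also $\E[X^+_{\tau_n}]\le\E[X^-_{\tau_n}]\le C$, and Fatou applied to both $X^\pm_{\tau_n}$ yields $G\in L^1$, hence $G\in\R$ a.s.

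Next, I would show $\E[A_{\tau-}]<\infty$ by further localizing $M$ via a sequence $(\rho_{n,k})_k$, chosen so that $\rho_{n,k}\ge k$ and each $M^{\tau_n\wedge\rho_{n,k}}$ is a true martingale. Taking expectations in $X=M-A$ yields $\E[A_{\tau_n\wedge\rho_{n,k}}]=-\E[X_{\tau_n\wedge\rho_{n,k}}]\le C$, and two applications of monotone convergence give $\E[A_{\tau-}]\le C$. Consequently $G_M:=G+A_{\tau-}$ is the almost-sure limit of $M_t$ as $t\to\tau$ and lies in $L^1$. Setting $\sigma_j:=\tau_j\wedge\rho_{j,j}$, each $M^{\sigma_j}$ is a true martingale that is eventually constant at the integrable $M_{\sigma_j}$, hence uniformly integrable, and its natural extension to $[0,\infty]$ with terminal value $M_{\sigma_j}$ is a martingale on $[0,\infty]$ coinciding with $\overline M^{\sigma_j}$. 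This proves $\overline M$ is a local martingale on $[0,\infty]$, which in the case $A=0$ already yields the final assertion of the lemma. For the supermartingale property of $\overline X$ in the general case, one starts from
\[
\E[X_{\rho\wedge\tau_n}\oo_F]\le\E[X_{\sigma\wedge\tau_n}\oo_F]
\]
for bounded stopping times $\sigma\le\rho$ in $[0,\infty]$ and $F\in\Fcal_\sigma$, decomposes $X=M-A$, and passes to the $L^1$ limit: $A_{\rho\wedge\tau_n}\uparrow\overline A_\rho$ in $L^1$ by monotone convergence using $\E[A_{\tau-}]<\infty$, while $M_{\rho\wedge\tau_n}\to\overline M_\rho$ in $L^1$ is obtained from the uniform integrability of each $M^{\sigma_j}$ via a diagonal argument exploiting $M_{\rho\wedge\tau_n}=\overline M^{\sigma_j}_{\rho\wedge\tau_n}$ whenever $\sigma_j\ge\tau_n$.

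Finally, stationarily local integrability of $\overline X$ follows from Lemma~\ref{L:ELI}\ref{L:ELI:3}: the crossing times $\rho_n=\inf\{t\ge0:|\overline X_t|\ge n\}$ are eventually infinite almost surely, since $\overline X$ is c\`adl\`ag on $[0,\infty]$ with $\overline X_\infty=G\in\R$, and the bound $|\overline X^{\rho_n}|\le n+|\Delta\overline X_{\rho_n}|$ together with integrability of the jump (forced by the supermartingale structure just established) yields the claim. The hard part will be the $L^1$-passage to the limit in the supermartingale step: naive Fatou on $X^\pm$ gives only one-sided inequalities, and the uniform integrability required must be extracted from the specific decomposition $X=M-A$ with $\E[A_{\tau-}]<\infty$ combined with the uniform integrability of each $M^{\sigma_j}$.
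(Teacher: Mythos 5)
Your treatment of the convergence of $X$, the integrability of $G$, and the bound $\E[A_{\tau-}]<\infty$ is fine in outline (it is essentially the upcrossing argument that the paper outsources to its references), but the two remaining steps contain genuine gaps. The decisive one is the localization of $\overline M$: your stopping times $\sigma_j=\tau_j\wedge\rho_{j,j}$ satisfy $\sigma_j<\tau$ for every $j$, so the statement that each $\overline M^{\sigma_j}$ is a uniformly integrable martingale merely re-expresses the hypothesis that $M$ is a local martingale on $\lc 0,\tau\lc$; it carries no information about $\overline M$ at and beyond $\tau$. Being a local martingale on $[0,\infty]$ requires a localizing sequence that is eventually equal to $\infty$ --- this is precisely the stationary localization of Section~3 --- and Lemma~\ref{L:ex1}\ref{L:ex1:2} (see also Example~\ref{E:P1}) exhibits a martingale on $[0,\infty)$ that converges almost surely yet is not even a semimartingale on $[0,\infty]$, so no argument that only stops strictly before $\tau$ can close this step. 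The paper instead uses the crossing times $\rho_m=\inf\{t\ge0:|\overline X_t|>m\}$ of the \emph{extended} process: because $\overline X$ converges, $\bigcup_{m}\{\rho_m=\infty\}=\Omega$, and the bound $|\overline X_{\rho_m\wedge\tau_n}|\le|\overline X_{\rho_m}|+m$ allows dominated convergence to give $\E[\overline X_{\rho_m}]=0$, which upgrades the supermartingale $\overline X^{\rho_m}$ to a uniformly integrable martingale. The same times deliver the stationary local integrability in one line, which you instead route through Lemma~\ref{L:ELI}\ref{L:ELI:3}.

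The second gap is the $L^1$-passage $M_{\rho\wedge\tau_n}\to\overline M_\rho$ needed for the supermartingale property of $\overline X$. Uniform integrability of each family $\{M_{t\wedge\sigma_j}\}_{t}$ separately does not yield uniform integrability of $\{M_{\rho\wedge\tau_n}\}_{n}$, which runs across all $j$; the ``diagonal argument'' is not an argument. Concretely, for the true martingale $M=1-\Ecal(B)$ with $B$ a Brownian motion, $\tau=\infty$, $\tau_n=n$, $A=0$, one has $\sup_n\E[M_n^-]\le 1$ and $M_t\to 1$ almost surely, yet $\E[M_n]=0$ for all $n$ while $\E[\lim_{t\to\infty}M_t]=1$, so $M_{\tau_n}$ does not converge to $\overline M_\infty$ in $L^1$. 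This shows that your route cannot be completed from the ingredients you have assembled, and also that the supermartingale inequality at time $\infty$ cannot be extracted from $\sup_n\E[X^-_{\tau_n}]<\infty$ alone; in every application the paper makes of this lemma the process is bounded below by an integrable random variable, and it is this uniform lower bound that legitimizes the conditional Fatou argument (the paper's one-line proof via Theorem~1.3.15 of Karatzas--Shreve). Relatedly, your opening step ``$X^-$ is a submartingale, so optional stopping gives $\E[(X^{\tau_n}_t)^-]\le\E[X^-_{\tau_n}]$'' uses that $X^{\tau_n}$ is a genuine supermartingale, whereas under Assumption~\ref{A:1} it is a priori only a local one; this must be addressed before Jensen and optional sampling can be invoked.
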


\begin{proof}
Supermartingale convergence implies that $G$ exists; see the proof of Proposition~A.4 in~\citet{CFR2011} and Lemma~2.1 in \citet{Larsson:Ruf:2017} for a similar statement. 
 Fatou's lemma, applied as in Theorem~1.3.15 in \citet{KS1}, yields the integrability of $\overline X_\rho$ for each $[0,\infty]$--valued stopping time $\rho$, as well as the supermartingale property of $\overline{X}$.
Now, define a sequence of stopping times $(\rho_m)_{m \in \N}$ by
\[
\rho_m = \inf\{ t\ge 0: |\overline{X}_t| > m \}
\]
and note that $\bigcup_{m\in\N}\{\rho_m=\infty\}=\Omega$.  Thus, $\overline{X}$ is stationarily locally integrable, with the corresponding sequence $(|\overline{X}_{\rho_m}| + m)_{m \in \N}$ of integrable random variables.

Assume now that $X$ is a local martingale and, without loss of generality, that  $\overline{X}^{\tau_n}$ is a uniformly integrable martingale for each $n \in \N$. 
Fix $m \in \N$ and note that  $\lim_{n\to\infty} \overline{X}_{\rho_m\wedge\tau_n}=\overline{X}_{\rho_m}$. Next, the inequality $|\overline{X}_{\rho_m \wedge \tau_n}| \leq |\overline{X}_{\rho_m}| + m$ for each $n \in \N$ justifies an application of dominated convergence as follows:
\[
\E\left[\overline{X}_{\rho_m}\right] = \E\left[\lim_{n \to \infty} \overline{X}_{\rho_m \wedge \tau_n}\right]  = \lim_{n \to \infty} \E\left[\overline{X}_{\rho_m \wedge \tau_n}\right] = 0.
\]
Hence, $\overline{X}$ is a local martingale, with localizing sequence $(\rho_m)_{m \in \N}$.
\end{proof}

For the proof of the next lemma, we are not allowed to use Corollary~\ref{C:convergence_QV}, as it relies on Theorem~\ref{T:conv}, which we have not yet proved.
\begin{lemma}[Continuous case] \label{L:CC}
Let $X$ be a continuous local martingale on $\lc0,\tau\lc$. If $[X,X]_{\tau-}<\infty$ then the limit $\lim_{t\to\tau}X_t$  exists in $\R$.
\end{lemma}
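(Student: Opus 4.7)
My plan is to stop $X$ at the first time $[X,X]$ reaches a level $c$, which yields a continuous martingale of bounded quadratic variation, and then to invoke the supermartingale convergence result Lemma~\ref{L:SMC}.

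Specifically, for each $c\in\N$ I will set $\sigma_c=\inf\{t\ge 0:[X,X]_t\ge c\}$, with the infimum taken over $t<\tau$ and equal to $\infty$ when the set is empty. Continuity and adaptedness of $[X,X]$ on $\lc0,\tau\lc$ show that $\sigma_c$ is a stopping time, and the event $\{[X,X]_{\tau-}<\infty\}$ coincides almost surely with $\bigcup_{c\in\N}\{\sigma_c\ge\tau\}$. On $\{\sigma_c\ge\tau\}$ the processes $X$ and $X^{\sigma_c}$ agree on $\lc0,\tau\lc$, so it suffices to prove, for each fixed $c\in\N$, that $\lim_{t\to\tau}X^{\sigma_c}_t$ exists in $\R$ almost surely.

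Fix such a $c$ and an announcing sequence $(\tau_n)_{n\in\N}$ for $\tau$. For every $n$, the process $X^{\sigma_c\wedge\tau_n}=(X^{\tau_n})^{\sigma_c}$ is a continuous local martingale on $[0,\infty)$ with quadratic variation bounded by $c$; hence it is a uniformly integrable, $L^2$-bounded martingale satisfying $\E[X^2_{\sigma_c\wedge\tau_n}]=\E[[X,X]_{\sigma_c\wedge\tau_n}]\le c$. Cauchy--Schwarz then yields
\[
\sup_{n\in\N}\E[(X^{\sigma_c})^-_{\tau_n}]\le\sup_{n\in\N}\E[|X_{\sigma_c\wedge\tau_n}|]\le\sqrt{c}<\infty.
\]
Since $X^{\sigma_c}$ is a continuous local martingale on $\lc0,\tau\lc$, Assumption~\ref{A:1} is in force with $M=X^{\sigma_c}$ and $A\equiv0$, and Lemma~\ref{L:SMC} delivers the convergence of $X^{\sigma_c}_t$ as $t\to\tau$ to a finite limit almost surely, completing the reduction.

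No step is truly obstructive; the only point deserving a brief line of care is that $\sigma_c$ is a genuine stopping time even though $[X,X]$ is defined only on the stochastic interval $\lc0,\tau\lc$. This follows from the identity $\{\sigma_c\le s\}=\{\sup_{t\le s,\,t<\tau}[X,X]_t\ge c\}$, which lies in $\Fcal_s$ by the right-continuity of $\F$ together with the continuity and adaptedness of $[X,X]$ on $\lc0,\tau\lc$.
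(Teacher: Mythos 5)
Your proof is correct, but it takes a different route from the paper, which at this point simply cites Exercise~IV.1.48 of Revuz and Yor and gives no argument at all. What you do is reconstruct, in the paper's own framework, the classical proof behind that citation: stop at the crossing times $\sigma_c$ of the quadratic variation, observe that $X^{\sigma_c\wedge\tau_n}$ is an $L^2$-bounded continuous martingale with second moment at most $c$, and then feed the resulting uniform bound $\sup_n\E[(X^{\sigma_c})^-_{\tau_n}]\le\sqrt{c}$ into Lemma~\ref{L:SMC}. This buys self-containedness and, importantly, respects the paper's explicit warning that Corollary~\ref{C:convergence_QV} may not be used here: Lemma~\ref{L:SMC} is proved independently of Theorem~\ref{T:conv}, so there is no circularity. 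It also handles the stochastic-interval setting directly rather than leaving the reader to adapt the textbook statement. Two cosmetic points you may wish to tidy: the identity $\{\sigma_c\le s\}=\{\sup_{t\le s,\,t<\tau}[X,X]_t\ge c\}$ can fail in the corner case where the supremum equals $c$ only in the limit $t\to\tau$ with $\tau\le s$; it is cleaner to characterize $\{\sigma_c<s\}$ via rational times and then use right-continuity of $\F$ to get $\{\sigma_c\le s\}\in\Fcal_s$. And the computation $\E[X^2_{\sigma_c\wedge\tau_n}]=\E[[X,X]_{\sigma_c\wedge\tau_n}]$ tacitly uses $X_0=0$, which is harmless (replace $X$ by $X-X_0$, or note that the lemma is only invoked for $X^c$) but worth a word.
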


\begin{proof}
See Exercise~IV.1.48 in \citet{RY}.
\end{proof}

The next lemma will serve as a tool to handle truncated jump measures.

\begin{lemma}[Bounded jumps] \label{L:BJ}
Let $\mu$ be an integer-valued random measure such that $\mu(\R_+ \times [-1,1]^c) = 0$, and let $\nu$ be its compensator. Assume either $x^2*\mu_{\infty-}$ or $x^2*\nu_{\infty-}$ is finite. Then so is the other one, we have $x\in G_{\rm loc}(\mu)$, and the limit $\lim_{t\to\infty}x*(\mu-\nu)_t$  exists in $\R$.
\end{lemma}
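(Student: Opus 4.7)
The plan is to leverage the hypothesis $\mu(\R_+\times[-1,1]^c)=0$, which forces $|x|\leq 1$ on the support of $\mu$, to make all relevant nondecreasing processes locally bounded, and then apply the Lenglart--Rebolledo inequality.

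First, I would observe that $x^2*\mu$ is adapted, nondecreasing, and has jumps at most $1$; hence stopping at $S_n=\inf\{t\ge 0: (x^2*\mu)_t>n\}$ gives $(x^2*\mu)^{S_n}\leq n+1$. So $x^2*\mu$ is locally integrable, and its predictable dual projection is $x^2*\nu$. A standard consequence of Lenglart's inequality, see e.g.~\citet[Section~VII.3]{Dellacherie/Meyer:1982}, asserts that for such a pair of an adapted nondecreasing process and its predictable dual projection, the sets where each is finite at infinity coincide up to a null set. This yields $\{x^2*\mu_{\infty-}<\infty\}=\{x^2*\nu_{\infty-}<\infty\}$ almost surely; denote the common event by $D$.

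Next, to place $x$ in $G_{\rm loc}(\mu)$, I note that $x^2*\nu$ is predictable, nondecreasing, with jumps bounded by $\nu(\{t\},\R)\leq 1$. Hence $R_n=\inf\{t\geq 0: (x^2*\nu)_t>n\}$ makes $(x^2*\nu)^{R_n}\leq n+1$, with $R_n\uparrow\infty$ almost surely under our hypothesis. Local integrability of $x^2*\nu$ together with $|x|\leq 1$ then yields $x\in G_{\rm loc}(\mu)$ via \citet[Definition~II.1.27 and Proposition~II.1.33]{JacodS}.

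Finally, set $L:=x*(\mu-\nu)$. The jump formula $\Delta L_t=x\oo_{\Delta\mu_t\neq 0}-\int x\,\nu(\{t\},\dd x)$ combined with $|x|\leq 1$ and $\nu(\{t\},\R)\leq 1$ gives $|\Delta L|\leq 2$, so $L$ is locally bounded. Expanding $(\Delta L)^2$ and applying Jensen's inequality to the compensating term yields $[L,L]\leq 2(x^2*\mu+x^2*\nu)$, hence $[L,L]_{\infty-}<\infty$ on $D$. Stopping at $T_n=\inf\{t\ge 0:[L,L]_t>n\}$ gives $[L^{T_n},L^{T_n}]_\infty\leq n+4$, so $L^{T_n}$ is an $L^2$-bounded martingale that converges almost surely. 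As $T_n\uparrow\infty$ on $D$, and $D$ has full probability, $\lim_{t\to\infty}L_t$ exists in $\R$ almost surely. The main delicate point is verifying the bound $[L,L]\leq 2(x^2*\mu+x^2*\nu)$, which requires careful tracking of the jump structure of $x*(\mu-\nu)$ when $\mu$ has predictable jump times; the rest is a routine chain of localizations.
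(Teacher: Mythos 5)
Your proof is correct and follows essentially the same route as the paper's: exploit that the hypothesis forces the jumps of $x^2*\mu$ and $x^2*\nu$ to be bounded by one, localize by first-crossing times to identify the events $\{x^2*\mu_{\infty-}<\infty\}$ and $\{x^2*\nu_{\infty-}<\infty\}$, obtain $x\in G_{\rm loc}(\mu)$ from Theorem~II.1.33 of Jacod--Shiryaev, and deduce convergence of $x*(\mu-\nu)$ from an $L^2$-bounded martingale after stopping. The only (harmless) differences are that you invoke the Lenglart-domination equivalence of finiteness sets where the paper applies supermartingale convergence directly to the local martingale $x^2*\mu-x^2*\nu$ stopped at crossing times, and that you localize via the optional bracket $[L,L]\le 2(x^2*\mu+x^2*\nu)$ where the paper uses the predictable bracket bound $\langle L,L\rangle\le x^2*\nu$ and crossing times of $x^2*\nu$.
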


\begin{proof}
First, the condition on the support of $\mu$ implies that both $x^2*\mu$ and $x^2*\nu$ have jumps bounded by one. 
Now, let $\rho_n$ be the first time $x^2*\nu$ crosses some fixed level $n \in \N$, and consider the local martingale $F=x^2*\mu-x^2*\nu$. Since $F^{\rho_n}\ge-n-1$, the supermartingale convergence theorem implies that $F^{\rho_n}_{\infty-}$ exists in $\R$, whence $x^2*\mu_{\infty-}=F_{\infty-}+x^2*\nu_{\infty-}$ exists and is finite on $\{\rho_n=\infty\}$. This yields
\begin{equation*}
\left\{ x^2*\nu_{\infty-}<\infty\right\}  \subset\left\{ x^2*\mu_{\infty-}<\infty\right\}.
\end{equation*}
The reverse inclusion is proved by interchanging $\mu$ and $\nu$ in the above argument.

Next, the local boundedness of $x^2*\nu$ implies that $x*(\mu-\nu)$ is well-defined and a local martingale with $\langle x*(\mu-\nu),x*(\mu-\nu)\rangle\le x^2*\nu$; see Theorem~II.1.33 in \citet{JacodS}. Hence, for each $n \in \N$, with $\rho_n$ as above, $x*(\mu-\nu)^{\rho_n}$ is a uniformly integrable martingale and thus convergent. Therefore $x*(\mu-\nu)$ is convergent on the set $\{\rho_n=\infty\}$, which completes the argument.
\end{proof}

\subsection{Proof of Theorem~\ref{T:conv}}

We start by proving that {\ref{T:conv:a}}  yields that $\Delta X$ is stationarily locally integrable on $D$. By localization, in conjunction with Lemma~\ref{L:ELI}\ref{L:ELI:2}, we may assume that $(\Delta X)^-\wedge X^-\le\Theta$ for some integrable random variable $\Theta$ and that $\sup_{t<\tau} |X_t| < \infty$. With $\rho_n=\inf\{t \geq 0:  X_t\le-n\}$ we have $X^{\rho_n}\ge -n-(\Delta X_{\rho_n})^-\1{\rho_n<\tau}$ and $X^{\rho_n}\ge -n-X_{\rho_n}^-\1{\rho_n<\tau}$. Hence $X^{\rho_n}  \geq -n-\Theta$ and thus, by Lemma~\ref{L:SMC}, $X^{\rho_n}$ is stationarily locally integrable and Lemma~\ref{L:ELI}\ref{L:ELI:3} yields that $\Delta X^{\rho_n}$ is as well, for each $n \in \N$. We have $\bigcup_{n\in\N}\{\rho_n=\tau\}=\Omega$, and another application of Lemma~\ref{L:ELI}\ref{L:ELI:2} yields the implication.

We now verify the claimed implications.

{\ref{T:conv:a}} $\Longrightarrow$ {\ref{T:conv:a'}}:  Obvious.

{\ref{T:conv:a'}} $\Longrightarrow$ {\ref{T:conv:a}}:   By localization we may assume that $(\Delta X)^-\wedge X^-\le\Theta$ for some integrable random variable $\Theta$ and that $\sup_{t<\tau} X_t^- < \infty$ on $\Omega$.
 With $\rho_n=\inf\{t \geq 0:  X_t\le-n\}$ we have $X^{\rho_n}  \geq -n-\Theta$, for each $n \in \N$.
 The supermartingale convergence theorem (Lemma~\ref{L:SMC}) now implies that $X$ converges.
 
{\ref{T:conv:a}} $\Longrightarrow$ {\ref{T:conv:b'}}: This is an application of Lemma~\ref{L:ELI}\ref{L:ELI:3}, after recalling that {\ref{T:conv:a}} implies that $\Delta X$ is stationarily locally integrable on $D$.

{\ref{T:conv:b'}} $\Longrightarrow$ {\ref{T:conv:a}}: This is an application of a localization argument and the supermartingale convergence theorem stated in Lemma~\ref{L:SMC}.

{\ref{T:conv:a}} $\Longrightarrow$ {\ref{T:conv:b''}} \& {\ref{T:conv:c}} \& {\ref{T:conv:f}}:
By localization, we may assume that $|\Delta X|\le \Theta$ for some integrable random variable $\Theta$ and that $X = X^{\rho}$ with $\rho=\inf\{t \geq 0:|X_t|\ge \kappa\}$ for some fixed $\kappa \geq 0$. Next, observe that $X\ge -\kappa-\Theta$. Lemma~\ref{L:SMC} yields that $G=\lim_{t\to \tau}X_t$ exists in $\R$ and that the process $\overline X=X\oo_{\lc0,\tau\lc}+G\oo_{\lc\tau,\infty\lc}$, extended to $[0,\infty]$ by $\overline X_\infty=G$, is a supermartingale on $[0,\infty]$. Let $\overline X=\overline M-\overline A$ denote its canonical decomposition. Then $A_{\tau-}=\overline A_\infty<\infty$ and $[X,X]_{\tau-}= [\overline X,\overline X]_\infty < \infty$. Moreover, since $\overline X$ is a special semimartingale on $[0,\infty]$, Proposition~II.2.29 in~\citet{JacodS} yields $(x^2\wedge|x|)*\nu^X_{\tau-}=(x^2\wedge|x|)*\nu^{\overline X}_\infty<\infty$. Thus~{\ref{T:conv:c}} and~{\ref{T:conv:f}} hold. Now, {\ref{T:conv:b''}} follows again by an application of Lemma~\ref{L:ELI}\ref{L:ELI:3}.

{\ref{T:conv:b''}} $\Longrightarrow$ {\ref{T:conv:a}}:  By Lemma~\ref{L:ELI}\ref{L:ELI:4} we may assume that $A = 0$, so that $-X$ is a local supermartingale. The result then follows again from Lemma~\ref{L:SMC} and  Lemma~\ref{L:ELI}\ref{L:ELI:3}.

{\ref{T:conv:b'} \& \ref{T:conv:b''}} $\Longrightarrow$ {\ref{T:conv:e}}: Obvious.

{\ref{T:conv:e}} $\Longrightarrow$ {\ref{T:conv:b'}}: Obvious.

{\ref{T:conv:c}} $\Longrightarrow$ {\ref{T:conv:a}}:
The process $B=[X^c,X^c]+(x^2\wedge|x|)*\nu^X+A$ is predictable and converges on~$D$. Hence, by Lemma~\ref{L:ELI}\ref{L:ELI:4}, $B$ is  stationarily locally bounded on $D$. By localization we may thus assume that $B\le\kappa$ for some constant $\kappa>0$. Lemma~\ref{L:CC} implies that $X^c$ converges and Lemma~\ref{L:BJ} implies that $x\oo_{|x|\le 1}*(\mu^X-\nu^X)$ converges. Furthermore,
\[
\E\left[\, |x|\oo_{|x|>1}*\mu^X_{\tau-}\, \right] = \E\left[\, |x|\oo_{|x|>1}*\nu^X_{\tau-}\, \right] \le \kappa,
\]
whence $|x|\oo_{|x|>1}*(\mu^X-\nu^X)=|x|\oo_{|x|>1}*\mu^X-|x|\oo_{|x|>1}*\nu^X$ converges. We deduce that~$X$ converges. It now suffices to show that $\Delta X$ is stationarily locally integrable. Since
\begin{align*}
\sup_{t<\tau} |\Delta X_t|
\leq 1 + |x|\oo_{|x|\ge 1}*\mu^X_{\tau-},
\end{align*}
we have $\E[\sup_{t<\tau} |\Delta X_t|] \le 1+\kappa$. 

{\ref{T:conv:f}} $\Longrightarrow$ {\ref{T:conv:a}}:  By a localization argument we may assume that $(\Delta X)^- \wedge X^-\leq \Theta$ for some integrable random variable~$\Theta$.  Moreover, since $[X,X]_{\tau-}<\infty$ on $D$, $X$ can only have finitely many large jumps on $D$. Thus after further localization we may assume that $X = X^{\rho}$, where $\rho=\inf\{t \geq 0:|\Delta X_t|\ge \kappa_1\}$ for some large $\kappa_1>0$. Now, Lemmas~\ref{L:CC} and~\ref{L:BJ} imply that  $X' = X^c + x \oo_{|x| \leq \kappa_1} * (\mu^X-\nu^X)$  converges on $D$.  Hence Lemma~\ref{L:ELI}\ref{L:ELI:3} and a further localization argument let us assume that $|X'|\le\kappa_2$ for some constant $\kappa_2>0$.  Define $\widehat X = x \oo_{x<-\kappa_1} * (\mu^X-\nu^X)$ and suppose for the moment we know that~$\widehat X$ converges on~$D$. Consider the decomposition
\begin{equation} \label{eq:T:conv:ftob}
X = X' + \widehat X + x\oo_{x>\kappa_1}*\mu^X-x\oo_{x>\kappa_1}*\nu^X - A.
\end{equation}
The first two terms on the right-hand side converge on $D$, as does the third term since $X=X^\rho$. However, since $\limsup_{t\to\tau}X_t>-\infty$ on $D$ by hypothesis, this forces also the last two terms to converge on~$D$, and we deduce~\ref{T:conv:a} as desired.  It remains to prove that $\widehat X$ converges on~$D$, and for this we will rely repeatedly on the equality $X=X^\rho$ without explicit mentioning. In view of~\eqref{eq:T:conv:ftob} and the bound $|X'|\le\kappa_2$, we have
\[
\widehat X  \geq X - \kappa_2  -  x \oo_{x>\kappa_1} * \mu^X = X - \kappa_2 -(\Delta X_\rho)^+\oo_{\lc\rho,\tau\lc}.
\]
Moreover,  by definition of $\widehat X$  and $\rho$ we have $\widehat X\ge 0$ on $\lc0,\rho\lc$; hence $\widehat X\ge \Delta X_\rho\oo_{\lc\rho,\tau\lc}$. We deduce from the defintiion of $\widehat X$ that  $\widehat X^-\le X^-+\kappa_2$ and $\widehat X^-\le(\Delta X)^-$. Hence we have $\widehat X^- \leq   (\Delta X)^-  \wedge  X^-  +  \kappa_2 \leq \Theta + \kappa_2.$  Lemma~\ref{L:SMC} now implies that $\widehat X$ converges, which proves the stated implication.

{\ref{T:conv:a}} \& {\ref{T:conv:f}} $\Longrightarrow$    {\ref{T:conv:g}}: We now additionally assume that $X$ is constant on $\lc\tau_J,\tau\lc$. First, note that $\Ecal(X)$ changes sign finitely many times on $D$ since $ \oo_{x < -1} * \mu_{\tau-}^X \leq  x^2  *\mu_{\tau-}^X < \infty$ on $D$. Therefore, it is sufficient to check that $\lim_{t\to\tau}|\Ecal(X)_t|$ exists in $(0,\infty)$ on $D \cap \{\tau_J = \infty\}.$  However, this follows from the fact that  $\log|\Ecal(X)|=X - [X^c,X^c] /2 - (x - \log |1+x|)  * \mu^X$ on $\lc 0, \tau_J\lc$ and the inequality $x-\log(1+x)\le x^2$ for all $x\ge-1/2$.

{\ref{T:conv:g}}  $\Longrightarrow$    {\ref{T:conv:a'}}: Note that we have  $\lim_{t \to \tau} X_t - [X^c,X^c]_t/2 - (x - \log (1+x))\oo_{x>-1} * \mu_t^X$ exists in $\R$ on $D$, which then yields the implication.
\qed

\subsection{Proof of Theorem~\ref{T:convYX}}

The proof relies on a number of intermediate lemmas. We start with a special case of Markov's inequality that is useful for estimating conditional probabilities in terms of unconditional probabilities. This inequality is then applied in a general setting to control conditional probabilities of excursions of convergent processes.

\begin{lemma}[A Markov type inequality] \label{L:cprob}
Let $\Gcal\subset\Fcal$ be a sub-sigma-field, and let $G\in\Gcal$, $F\in\Fcal$, and $\delta>0$. Then
\[
\P\left(\oo_G\,\P( F\mid\Gcal ) \geq \delta \right)\ \le\ \frac{1}{\delta}\P(G \cap F).
\]
\end{lemma}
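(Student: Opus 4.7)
The plan is to recognize this as a direct application of Markov's inequality to a cleverly chosen nonnegative random variable, combined with the tower property of conditional expectation.

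First I would define $Y = \oo_G\,\P(F\mid\Gcal)$, which is a nonnegative random variable. Since $G \in \Gcal$, the indicator $\oo_G$ is $\Gcal$-measurable, and $\P(F\mid\Gcal)$ is $\Gcal$-measurable by definition. Applying the standard Markov inequality to $Y$ yields
\[
\P(Y \geq \delta) \leq \frac{1}{\delta}\,\E[Y].
\]

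Next I would compute $\E[Y]$. Because $\oo_G$ is $\Gcal$-measurable, it can be pulled inside the conditional expectation:
\[
\E[Y] = \E\big[\oo_G\,\P(F\mid\Gcal)\big] = \E\big[\E[\oo_G \oo_F\mid\Gcal]\big] = \E[\oo_G \oo_F] = \P(G\cap F),
\]
where the second equality uses the tower property. Substituting this back into the Markov bound gives exactly the claimed inequality.

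There is essentially no obstacle here; the only subtlety is noticing that inserting $\oo_G$ inside the conditional expectation is legitimate precisely because $G \in \Gcal$, which is what allows the right-hand side to collapse to the unconditional probability $\P(G\cap F)$ rather than merely to $\E[\oo_G \P(F\mid\Gcal)]$. The proof should fit in two or three lines.
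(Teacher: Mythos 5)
Your proof is correct and is essentially identical to the paper's one-line argument: the paper writes $\P(G\cap F) = \E[\oo_G\,\P(F\mid\Gcal)] \ge \delta\,\P(\oo_G\,\P(F\mid\Gcal)\ge\delta)$, which is exactly your Markov-plus-tower-property computation in condensed form. Nothing to add.
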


\begin{proof} 
We have $\P(G\cap F) = \E\left[ \oo_G\,\P(F\mid\Gcal) \right] \ge \delta\, \P\left(\oo_G\,\P(F\mid\Gcal)\geq \delta\right)$.
\end{proof}

\begin{lemma} \label{L:cprob2}
Let $\tau$ be a foretellable time, let $W$ be a measurable process on $\lc0,\tau\lc$, and let $(\rho_n)_{n\in\N}$ be a nondecreasing sequence of stopping times with $\lim_{n\to\infty}\rho_n\ge\tau$. Suppose the event
\begin{align} \label{180726}
C=\Big\{\lim_{t\to\tau} W_t=0 \textnormal{ and } \rho_n<\tau \textnormal{ for all }n\in\N\Big\}
\end{align}
lies in $\Fcal_{\tau-}$. Then for each $\varepsilon>0$,
\[
\P\left( W_{\rho_n} \le \varepsilon \mid \Fcal_{\rho_n-}\right) \ge \frac{1}{2} \quad\textnormal{for infinitely many }n\in\N
\]
holds almost surely on $C$.
\end{lemma}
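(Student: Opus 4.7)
The plan is a proof by contradiction: if the conclusion fails, there is some $N \in \N$ with $\P(C \cap E_N) > 0$, where $E_N := \bigcap_{n \ge N}\{\phi_n > 1/2\}$ and $\phi_n := \P(W_{\rho_n} > \varepsilon \mid \Fcal_{\rho_n-})$. The natural tool is Lemma~\ref{L:cprob} applied to sets in $\Fcal_{\rho_n-}$, but $C$ itself only lies in $\Fcal_{\tau-}$, so I would first approximate it. Letting $(\sigma_k)_{k\in\N}$ be an announcing sequence for $\tau$, one has $\Fcal_{\tau-} = \bigvee_k \Fcal_{\sigma_k}$, hence L\'evy's upward convergence theorem applied to the bounded martingale $\beta_k := \E[\oo_C \mid \Fcal_{\sigma_k}]$ yields $\beta_k \to \oo_C$ almost surely, so that $\widetilde C_k := \{\beta_k > 1/2\} \in \Fcal_{\sigma_k}$ satisfies $\oo_{\widetilde C_k} \to \oo_C$ almost surely.

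Next I would set $E_N^{(n)} := \bigcap_{m=N}^{n-1}\{\phi_m > 1/2\} \in \Fcal_{\rho_n-}$ and $H_{n,k} := \widetilde C_k \cap \{\sigma_k < \rho_n\}$, the latter lying in $\Fcal_{\rho_n-}$ by the standard fact that $A \cap \{S < T\} \in \Fcal_{T-}$ for $A \in \Fcal_S$ and stopping times $S, T$. Applying Lemma~\ref{L:cprob} with $G = H_{n,k} \cap E_N^{(n)} \in \Fcal_{\rho_n-}$, $F = B_n := \{W_{\rho_n} > \varepsilon\}$ and $\delta = 1/2$, and observing that $G \cap \{\phi_n > 1/2\} = H_{n,k} \cap E_N^{(n+1)}$, yields
\[
\P\bigl(H_{n,k} \cap E_N^{(n+1)}\bigr) \;\le\; 2\, \P(\widetilde C_k \cap B_n).
\]
On the event $C \cap E_N \cap \widetilde C_k$, the set $H_{n,k} \cap E_N^{(n+1)}$ holds for all sufficiently large $n$ (since $\rho_n \nearrow \tau > \sigma_k$ on $C$, and $E_N \subset E_N^{(n+1)}$), so Fatou's lemma gives $\P(C \cap E_N \cap \widetilde C_k) \le 2 \liminf_n \P(\widetilde C_k \cap B_n)$.

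To close the argument, $W_{\rho_n} \to 0$ on $C$ and dominated convergence give $\P(C \cap B_n) \to 0$, whence $\liminf_n \P(\widetilde C_k \cap B_n) \le \P(\widetilde C_k \setminus C)$. Sending $k \to \infty$ and using $\oo_{\widetilde C_k} \to \oo_C$ produces $\P(C \cap E_N) \le 2 \lim_k \P(\widetilde C_k \setminus C) = 0$, the desired contradiction. The principal obstacle is that the $\rho_n$ are not known to announce $\tau$ (only $\rho_n \to \tau$ on $C$), so $C$ cannot be reached directly from within $(\Fcal_{\rho_n-})$; interposing a genuine announcing sequence $(\sigma_k)$ of $\tau$ and then ordering the two limits $n \to \infty$ and $k \to \infty$ correctly is the main technical point.
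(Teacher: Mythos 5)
Your argument is correct, and it rests on the same two pillars as the paper's proof: the Markov-type inequality of Lemma~\ref{L:cprob} applied with an $\Fcal_{\rho_n-}$-measurable approximation of $C$, together with the facts that $\P(C\cap\{W_{\rho_n}>\varepsilon\})\to0$ and that the approximating sets converge to $C$. The difference is organizational, and your route is longer than necessary because the ``principal obstacle'' you identify is not actually there: since $\rho_\infty:=\lim_{n\to\infty}\rho_n\ge\tau$, one has $\Fcal_{\tau-}\subset\Fcal_{\rho_\infty-}=\bigvee_{n}\Fcal_{\rho_n-}$, so $C$ \emph{can} be reached from within $(\Fcal_{\rho_n-})_{n\in\N}$ directly. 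The paper exploits this by applying L\'evy's theorem to get $\P(C\mid\Fcal_{\rho_n-})\to\oo_C$ and taking $G_n=\{\P(C\mid\Fcal_{\rho_n-})>1/2\}$ as the approximating set, which makes your auxiliary announcing sequence $(\sigma_k)$, the sets $H_{n,k}$, and the careful ordering of the two limits in $n$ and $k$ unnecessary; it then concludes with Borel--Cantelli along a subsequence rather than your contradiction-plus-Fatou argument. Two further small points of comparison: the paper first replaces $\tau$ by an almost surely equal predictable time so that $\{\rho_n<\tau\}\in\Fcal_{\rho_n-}$ and works with $F_n=\{W_{\rho_n}>\varepsilon\}\cap\{\rho_n<\tau\}$, whereas you implicitly use the zero-extension convention for processes on $\lc0,\tau\lc$ to get $\{W_{\rho_n}>\varepsilon\}\subset\{\rho_n<\tau\}$ for free --- that is fine; and your identity $\Fcal_{\tau-}=\bigvee_k\Fcal_{\sigma_k}$ holds only up to $\P$-null sets in this setting (the filtration is not augmented and the announcing property holds only almost surely), which suffices for L\'evy's theorem but deserves a word.
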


\begin{proof}
By Theorem~IV.71 in~\citet{Dellacherie/Meyer:1978}, $\tau$ is almost surely equal to some predictable time $\tau'$. We may thus assume without loss of generality that $\tau$ is already predictable. Define events $F_n = \{W_{\rho_n} > \varepsilon \textnormal{ and } \rho_n<\tau\}$ and $G_n = \{\P(C\mid\Fcal_{\rho_n-}) > 1/2\}$ for each $n \in \N$ and some fixed $\varepsilon > 0$.  By Lemma~\ref{L:cprob}, we have
\begin{equation}\label{eq:L:cprob2}
\P\left( \oo_{G_n} \P( F_n \mid \Fcal_{\rho_n-}) > \frac{1}{2} \right) \le 2\,\P(G_n\cap F_n) \le 2\,\P(F_n\cap C) + 2\,\P(G_n\cap C^c).
\end{equation}
Clearly, we have $\lim_{n\to \infty}\P(F_n\cap C)= 0$. Also, since $\rho_\infty=\lim_{n\to\infty}\rho_n\ge\tau$, we have 
\[ \lim_{n\to\infty}\P(C\mid\Fcal_{\rho_n-})=\P(C\mid\Fcal_{\rho_\infty-})=\oo_C. 
\]
Thus $\oo_{G_n}=\oo_C$ for all sufficiently large $n \in \N$, and hence $\lim_{n\to\infty}\P(G_n\cap C^c)=0$ by bounded convergence. The left-hand side of~\eqref{eq:L:cprob2} thus tends to zero as $n$ tends to infinity, so that, passing to a subsequence if necessary, the Borel-Cantelli lemma yields $\oo_{G_n} \P( F_n \mid \Fcal_{\rho_n-})\le1/2$ for all but finitely many $n \in \N$. Thus, since $\oo_{G_n}=\oo_C$ eventually, we have $\P( F_n \mid \Fcal_{\rho_n-})\le1/2$ for infinitely many $n \in \N$ on $C$. Since $\tau$ is predictable we have $\{\rho_n<\tau\}\in\Fcal_{\rho_n-}$ by Theorem~IV.73(b) in \citet{Dellacherie/Meyer:1978}. Thus $\P( F_n \mid \Fcal_{\rho_n-})=\P( W_{\rho_n}>\varepsilon \mid \Fcal_{\rho_n-})$ on $C$, which yields the desired conclusion.
\end{proof}

Returning to the setting of Theorem~\ref{T:convYX}, we now show that $\gamma$ vanishes asymptotically on the event~\eqref{T:convYX:4}.

\begin{lemma} \label{L:convYX:1}
Under Assumption~\ref{A:2}, we have $\lim_{t\to\tau}\gamma_t=0$ on~\eqref{T:convYX:4}.
\end{lemma}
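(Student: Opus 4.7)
I would argue by contradiction.  Fix $\varepsilon>0$ and assume that
\[
C_\varepsilon := \eqref{T:convYX:4}\cap \Big\{\limsup_{t\to\tau}\gamma_t > 2\varepsilon\Big\}
\]
has positive probability; clearly $C_\varepsilon\in\Fcal_{\tau-}$.  Since $\gamma$ is predictable and nonzero only at the countably many predictable atoms of $\nu^X$ in the time variable, I would extract an increasing sequence $(\rho_n)_{n\in\N}$ of predictable stopping times exhausting the predictable graph $\{\gamma>2\varepsilon\}$, such that on $C_\varepsilon$ one has $\rho_n<\tau$ for every $n\in\N$.

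The central tool is Lemma~\ref{L:cprob2} applied with the measurable process $W=|\Delta Y|$; since $Y$ converges on $C_\varepsilon$, $W_t\to 0$ there, and the lemma yields that, for infinitely many $n$, $\P(|\Delta Y_{\rho_n}|\le \varepsilon\mid\Fcal_{\rho_n-})\ge 1/2$ almost surely on $C_\varepsilon$.  Using the identity $\Delta Y_{\rho_n}=\log(1+\Delta X_{\rho_n})+\gamma_{\rho_n}$ together with $\gamma_{\rho_n}>2\varepsilon$, the event $\{|\Delta Y_{\rho_n}|\le\varepsilon\}$ is contained in $\{\Delta X_{\rho_n}\le -\delta\}$ where $\delta:=1-e^{-\varepsilon}$, so
\[
\nu^X(\{\rho_n\},(-1,-\delta])\ \ge\ \tfrac12 \qquad \text{for infinitely many $n$ on $C_\varepsilon$.}
\]

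To derive the contradiction, I would exploit that $X$ is a local martingale with $\Delta X>-1$, and that $x\oo_{x>\kappa}*\nu^X_{\tau-}<\infty$ on $C_\varepsilon$.  These together make the conditional jump $\Delta X_{\rho_n}$ integrable, so $\int x\,\nu^X(\{\rho_n\},\dd x)=0$ at each such $\rho_n$, and combined with the negative-mass bound above this forces $\int x\oo_{x>0}\,\nu^X(\{\rho_n\},\dd x)\ge \delta/2$ for infinitely many $n$.  Splitting the positive-tail integral at~$\kappa$ leads either to $x\oo_{x>\kappa}*\nu^X_{\tau-}=\infty$---a direct contradiction---or to infinite accumulation of $x\oo_{0<x\leq\kappa}*\nu^X$ on $C_\varepsilon$; the latter case, in conjunction with the standard compensator--process correspondence $\{B_{\tau-}<\infty\}=\{B^p_{\tau-}<\infty\}$ for locally integrable nondecreasing processes applied to $(-\log(1+x))\oo_{x\leq-\delta}*\mu^X$ (whose local integrability follows from Assumption~\ref{A:2}), produces infinitely many actual jumps of $X$ of magnitude at least $\delta$ whose contributions to $\Delta Y$ cannot all be asymptotically cancelled by $\gamma$ at the accessible times, contradicting $\Delta Y_t\to 0$.

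The main obstacle is this last step: turning the blow-up of a predictable compensator back into almost-sure jump behaviour for $X$ and reconciling it with the convergence of $Y$.  I expect this to require careful separation of accessible from totally inaccessible jump times and repeated use of the compensator correspondence, with the hypothesis $x\oo_{x>\kappa}*\nu^X_{\tau-}<\infty$ playing its essential role through Lemma~\ref{L:ELI}\ref{L:ELI:5}.
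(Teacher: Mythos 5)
Your setup and the first half of the argument coincide with the paper's proof: the contradiction hypothesis, the predictable times $(\rho_n)_{n\in\N}$ exhausting $\{\gamma\ge 2\varepsilon\}$, Lemma~\ref{L:cprob2} applied to the jumps of $Y$, the martingale identity $\int x\,\nu^X(\{\rho_n\},\dd x)=0$, and the resulting bound $\int x\oo_{x>0}\,\nu^X(\{\rho_n\},\dd x)\ge \delta/2$ for infinitely many $n$ on the bad event. The gap is in the final step, and the route you sketch for closing it fails. You propose to convert the accumulated compensator mass into infinitely many actual jumps $\Delta X_t\le-\delta$ and then contradict $\Delta Y_t\to0$. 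But by \eqref{eq:DY} such a jump gives $\Delta Y_t=\log(1+\Delta X_t)+\gamma_t$, and nothing prevents $\gamma_t$ from being close to $-\log(1+\Delta X_t)$ at those times, making $\Delta Y_t\approx 0$. On \eqref{T:convYX:4} you only know that $Y$ converges --- the convergence of $X$ is the conclusion of Theorem~\ref{T:convYX}, not a hypothesis --- so infinitely many negative jumps of size at least $\delta$, exactly compensated by $\gamma$ inside $\Delta Y$, are entirely consistent with everything you have assumed. Your closing sentence flags this as ``the main obstacle,'' and indeed it is fatal: the contradiction cannot come from the negative jumps.

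The missing idea is to contradict the accumulated \emph{positive} compensator mass directly. Any actual jump with $\Delta X_t\ge 0\vee(e^{\varepsilon-\gamma_t}-1)$ forces $\Delta Y_t\ge\varepsilon$: if $\gamma_t\ge\varepsilon$ this holds for every nonnegative jump, and if $\gamma_t<\varepsilon$ it holds for every jump above $e^{\varepsilon-\gamma_t}-1$. Since $\Delta Y_t\to0$ on \eqref{T:convYX:4}, only finitely many such jumps occur, i.e.\ $\oo_{x\ge 0\vee(e^{\varepsilon-\gamma}-1)}*\mu^X_{\tau-}<\infty$ there. Applying Lemma~\ref{L:BJ} to the truncated measure $\oo_{0\vee(e^{\varepsilon-\gamma}-1)\le x\le\kappa}\oo_{\lc0,\tau\lc}\mu^X$ transfers this finiteness to the compensator, and together with $x\oo_{x>\kappa}*\nu^X_{\tau-}<\infty$ (part of the definition of \eqref{T:convYX:4}) one obtains $x\oo_{x\ge 0\vee(e^{\varepsilon-\gamma}-1)}*\nu^X_{\tau-}<\infty$. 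This quantity dominates $\sum_{n}\int x\oo_{x>0}\,\nu^X(\{\rho_n\},\dd x)$, since the threshold equals $0$ at each $\rho_n$ where $\gamma_{\rho_n}\ge2\varepsilon$, so it is simultaneously infinite by your lower bound --- the desired contradiction. This is the paper's argument; without it, or some substitute for it, the proposal does not close.
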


\begin{proof}
As in the proof of Lemma~\ref{L:cprob} we may assume that $\tau$ is predictable. We now argue by contradiction. To this end, assume there exists $\varepsilon>0$ such that $\P(D)>0$ where
\[
 D=\{\gamma_t\ge 2 \varepsilon\textnormal{ for infinitely many $t$}\} \cap \eqref{T:convYX:4}.
\]
 Let $(\rho_n)_{n\in\N}$ be a sequence of predictable times covering the predictable set $\{\gamma\ge2\varepsilon\}$. By \eqref{eq:DY} and since $X$ and $Y$ are c\`adl\`ag, any compact subset of $[0,\tau)$ can only contain finitely many time points~$t$ for which $\gamma_t\ge 2\varepsilon$. We may thus take the $\rho_n$ to satisfy $\rho_n<\rho_{n+1}<\tau$ on $D$ for all $n\in\N$, as well as $\lim_{n\to\infty}\rho_n\ge\tau$.

We now have, for each $n\in\N$ on $\{\rho_n<\tau\}$,
\begin{align*}
0
&= \int x\nu^X(\{\rho_n\}, \dd x) 
\le -(1-e^{-\varepsilon}) \P\left(\Delta X_{\rho_n} \le e^{-\varepsilon}-1\mid\Fcal_{\rho_n-}\right) + \int x\oo_{x>0}\, \nu^X(\{\rho_n\},\dd x) \\
&\le -(1-e^{-\varepsilon}) \P\left(\Delta Y_{\rho_n} \le \varepsilon\mid\Fcal_{\rho_n-}\right) + \int x\oo_{x>0}\, \nu^X(\{\rho_n\},\dd x),
\end{align*}
where the equality uses the local martingale property of $X$, the first inequality is an elementary bound involving Equation~II.1.26 in~\citet{JacodS}, and the second inequality follows from~\eqref{eq:DY}. 

Thus on $D$,
\begin{equation} \label{eq:convYX:1:1}
x \oo_{x \geq 0 \vee (e^{\varepsilon-\gamma}-1)}*\nu^X_{\tau-}
\ge \sum_{n\in\N}\int x\oo_{x>0}\, \nu^X(\{\rho_n\},\dd x) 
\ge (1-e^{-\varepsilon})\sum_{n\in\N}\P\left(\Delta Y_{\rho_n} \le \varepsilon\mid\Fcal_{\rho_n-}\right).
\end{equation}
With $W = \Delta Y$, Lemma~\ref{L:cprob2} implies that the right-hand side of~\eqref{eq:convYX:1:1} is infinite almost surely on $ C \supset D$, where $C$ is given in \eqref{180726}. 

We now argue that the left-hand side of~\eqref{eq:convYX:1:1} is finite almost surely on \eqref{T:convYX:4} $\supset D$, yielding the contradiction. To this end, since $\lim_{t\to\tau}\Delta Y_t=0$ on~\eqref{T:convYX:4}, we have $\oo_{x > e^{\varepsilon - \gamma} - 1} * \mu^X_{\tau-} < \infty$ on~\eqref{T:convYX:4}. Lemma~\ref{L:BJ}  applied to the random measure
\[
\mu=\oo_{0 \vee (e^{\varepsilon-\gamma}-1)\le x\le \kappa} \oo_{\lc 0, \tau\lc}\,\mu^X
\]
yield $x \oo_{0 \vee (e^{\varepsilon-\gamma}-1)\le x\le \kappa}*\nu^X_{\tau-}<\infty$; here $\kappa$ is as in Theorem~\ref{T:convYX}.  Since also $x\oo_{x>\kappa}*\nu^X_{\tau-}<\infty$ on~\eqref{T:convYX:4} by definition, the left-hand side of~\eqref{eq:convYX:1:1} is finite. 
\end{proof}

\begin{lemma} \label{L:convYX:0 new}
Fix $\varepsilon \in (0,1)$. Under Assumption~\ref{A:2}, we have
\[
[X^c, X^c]_{\tau-} + (\log(1+x)+\gamma)^2 \oo_{|x| \leq \varepsilon} * \nu^X_{\tau-}  - \log(1+x) \oo_{x \leq -\varepsilon} * \nu^X_{\tau-} + x  \oo_{x \geq  \varepsilon} * \nu^X_{\tau-}  < \infty
\]
 on~\eqref{T:convYX:4}.
 \end{lemma}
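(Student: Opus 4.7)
Write $D$ for the event~\eqref{T:convYX:4}. First, Lemma~\ref{L:convYX:1} gives $\lim_{t\to\tau}\gamma_t=0$ on $D$, so by the predictability of $\gamma$ and Lemma~\ref{L:ELI}\ref{L:ELI:4} we may localize stationarily on $D$ to ensure $\gamma\le L/2$, where $L:=-\log(1-\varepsilon)>0$. Lemma~\ref{L:convYX} makes $Y$ stationarily locally integrable on $D$, and then Theorem~\ref{T:conv} applied to the local martingale $Y$ yields, on $D$,
\[
[Y^c,Y^c]_{\tau-}+(y^2\wedge|y|)*\nu^Y_{\tau-}<\infty \quad\text{and}\quad [Y,Y]_{\tau-}<\infty.
\]
Since $Y^c=X^c$, the first summand already takes care of $[X^c,X^c]_{\tau-}<\infty$.

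\textbf{Key inequality.} The plan for the remaining three terms rests on the translation
\[
(y^2\wedge|y|)*\nu^Y \;\geq\; \bigl((\log(1+x)+\gamma)^2\wedge|\log(1+x)+\gamma|\bigr)*\nu^X.
\]
I would obtain this by splitting $\nu^Y$ into its atomless-in-$t$ part, which is the pushforward of the atomless part of $\nu^X$ under $x\mapsto\log(1+x)$ (since $\gamma\equiv 0$ there), and its atoms at predictable times $T$: at such a $T$, the conditional law of $\Delta Y_T=\log(1+\Delta X_T)+\gamma_T$ given $\Fcal_{T-}$ decomposes as the pushforward of $\nu^X(\{T\},dx)$ under $x\mapsto\log(1+x)+\gamma_T$ plus an atom of mass $1-\nu^X(\{T\},\R)$ at $\gamma_T$, so $\nu^Y(\{T\},\cdot)$, being this law restricted to $\R\setminus\{0\}$, integrates against any nonnegative Borel $g$ vanishing at $0$ to at least $\int g(\log(1+x)+\gamma_T)\nu^X(\{T\},dx)$. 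Specialising to $g(y)=y^2\wedge|y|$ and combining with the setup gives $\bigl((\log(1+x)+\gamma)^2\wedge|\log(1+x)+\gamma|\bigr)*\nu^X_{\tau-}<\infty$ on $D$.

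\textbf{Finishing the four terms.} For the small-jump term I would argue directly: $(\log(1+x)+\gamma)^2\oo_{|x|\le\varepsilon}*\mu^X\le [Y,Y]_{\tau-}-[Y^c,Y^c]_{\tau-}<\infty$ on $D$, and after the $\gamma$-localization its jumps are bounded, so Lemma~\ref{L:ELI}\ref{L:ELI:3} makes it stationarily locally bounded on $D$; the standard compensator bound $\E[\tilde A^{\rho_n}]=\E[A^{\rho_n}]$ then shows its $\nu^X$-compensator is finite on $D$. For term~3 the key inequality enters: for $x\le-\varepsilon$ and $\gamma\le L/2$ one has $|\log(1+x)+\gamma|\ge L/2$, and the elementary bound $y^2\wedge|y|\ge\min(1,L/2)\,|y|$ on $\{|y|\ge L/2\}$ converts the translation into $(-\log(1+x)-\gamma)\oo_{x\le-\varepsilon}*\nu^X_{\tau-}<\infty$ on $D$; the missing $\gamma\oo_{x\le-\varepsilon}*\nu^X$ piece is dominated by $(L/2)\,\oo_{x\le-\varepsilon}*\nu^X$, and the counting compensator $\oo_{x\le-\varepsilon}*\nu^X$ is finite on $D$ because each such jump of $X$ contributes at least $(L/2)^2$ to $[Y,Y]$, yielding only finitely many on $D$, bounded jumps, and therefore stationarily local boundedness of the counting process by Lemma~\ref{L:ELI}\ref{L:ELI:3}. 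Term~4 is immediate when $\varepsilon>\kappa$ from the definition of $D$, and when $\varepsilon\le\kappa$ reduces by the same counting argument applied to $\{\varepsilon\le x\le\kappa\}$, since $\Delta Y_s\ge\log(1+\varepsilon)>0$ at such jumps.

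\textbf{Main obstacle.} The delicate step is the key inequality above, particularly the careful bookkeeping at predictable times $T$ where $\gamma_T\ne 0$, at which $\nu^Y(\{T\},\cdot)$ only dominates the shifted pushforward of $\nu^X(\{T\},\cdot)$ rather than equalling it; everything else reduces to Lemma~\ref{L:ELI}, the standard compensator argument, and elementary inequalities.
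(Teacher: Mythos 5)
Your overall strategy coincides with the paper's: apply Lemma~\ref{L:convYX} and Theorem~\ref{T:conv} to $Y$ to obtain $[Y^c,Y^c]_{\tau-}+(y^2\wedge|y|)*\nu^Y_{\tau-}<\infty$ on~\eqref{T:convYX:4}, transfer this to $\nu^X$ through~\eqref{eq:DY} (the paper records your ``key inequality'' as an equality with an extra nonnegative $\gamma$-term; the content is the same), and then treat the four terms separately. Your compensator/counting arguments for the small-jump and positive-jump terms differ in detail from the paper's direct estimates based on the key relation, but they are sound.

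There is, however, one genuine gap, and it sits in the very first line of your setup: you cannot ``localize stationarily on $D$ to ensure $\gamma\le L/2$''. Lemma~\ref{L:ELI}\ref{L:ELI:4} only upgrades $\sup_{t<\tau}\gamma_t<\infty$ on $D$ to stationarily local \emph{boundedness}, i.e.\ $\gamma^{\rho_n}\le\Theta_n$ for some possibly large constants $\Theta_n$; since a stopped process retains all of its early values, no choice of stopping times can force the specific bound $L/2$ unless $\gamma\le L/2$ already holds on all of $\lc0,\tau\lc$, and $\lim_{t\to\tau}\gamma_t=0$ on $D$ only gives $\gamma_t\le L/2$ \emph{eventually}. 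This matters precisely for your third term: at a predictable time $T$ with $\gamma_T>L/2$ the bound $|\log(1+x)+\gamma_T|\ge L/2$ for $x\le-\varepsilon$ fails (indeed $\log(1+x)+\gamma_T$ can vanish on $\{x\le-\varepsilon\}$), so the key inequality gives no control of $\int-\log(1+x)\oo_{x\le-\varepsilon}\,\nu^X(\{T\},\dd x)$ there. The repair is exactly the extra step the paper takes: by Lemma~\ref{L:convYX:1} there are only finitely many times with $\gamma_t\ge L/2$ on~\eqref{T:convYX:4}, and each contributes a finite amount because $-\log(1+x)\oo_{x\le-\varepsilon}\le(x-\log(1+x))+|x|$, where $(x-\log(1+x))*\nu^X$ is finite-valued by Assumption~\ref{A:2} and $\int|x|\,\nu^X(\{t\},\dd x)<\infty$ by the local martingale property of $X$. (The same caveat brushes your counting argument for $\oo_{x\le-\varepsilon}*\mu^X$, but there it is harmless: the finitely many exceptional times do not affect finiteness of the count.) With this patch your proof is complete; your uses of $\gamma$-boundedness elsewhere only require a bound by \emph{some} constant, which Lemma~\ref{L:ELI}\ref{L:ELI:4} does deliver.
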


\begin{proof}
By Lemma~\ref{L:convYX}  condition \ref{T:conv:a} in Theorem~\ref{T:conv} holds with $X$ replaced by $-Y$. Using the equivalence with Theorem~\ref{T:conv}\ref{T:conv:c}, we obtain that
 $[X^c, X^c]_{\tau-}  = [Y^c, Y^c]_{\tau-} < \infty$ and 
\begin{align}\label{eq: L:convYX:0 new proof}
\Big( (\log(1+x)+\gamma)^2 \wedge |\log(1+x)+\gamma|\Big) * \nu^X_{\tau-}  + \sum_{s<\tau} (\gamma_s^2\wedge\gamma_s)\1{\Delta X_s=0}
= (y^2\wedge|y|)*\nu^Y_{\tau-}  < \infty
\end{align}
on \eqref{T:convYX:4}, where the equality in~\eqref{eq: L:convYX:0 new proof} follows from \eqref{eq:DY}.  Now, by localization, Lemma~\ref{L:convYX:1}, and Lemma~\ref{L:ELI}\ref{L:ELI:4}, we may assume that $\sup_{t < \tau} \gamma_t$ is bounded.  We then obtain from~\eqref{eq: L:convYX:0 new proof} that $(\log(1+x)+\gamma)^2 \oo_{|x| \leq \varepsilon} * \nu^X_{\tau-} < \infty$ on \eqref{T:convYX:4}. 

Next, note that
\begin{align*}  
  -\log(1+x)\oo_{x\le -\varepsilon} * \nu^X_{\tau-} &=   -\log(1+x)\oo_{x\le-\varepsilon}  \oo_{\{\gamma<-\log(1-\varepsilon)/2\}}  * \nu^X_{\tau-} \\
  	&\qquad +  \sum_{t < \tau} \int -\log(1+x)\oo_{x\le-\varepsilon}  \oo_{\{\gamma_t\geq -\log(1-\varepsilon)/2\}}\,  \nu^X(\{t\},\dd x) < \infty
\end{align*}
on \eqref{T:convYX:4}. Indeed, an argument based on \eqref{eq: L:convYX:0 new proof}  shows that the first summand is finite. The second summand is also finite since it consists of finitely many terms due to Lemma~\ref{L:convYX:1}, each of which is finite. The latter follows since $(x-\log(1+x))*\nu^X$ is a finite-valued process by assumption and $\int|x|\nu^X(\{t\},dx)<\infty$ for all $t<\tau$ due to the local martingale property of $X$.
Finally, a calculation based on \eqref{eq: L:convYX:0 new proof} yields $x\oo_{\varepsilon\le x\le \kappa}*\nu^X_{\tau-}<\infty$ on the event \eqref{T:convYX:4}, where $\kappa$ is as in Theorem~\ref{T:convYX}. This, together with the definition of \eqref{T:convYX:4}, implies that $x  \oo_{x \geq  \varepsilon} * \nu^X_{\tau-}  < \infty$ there, completing the proof.
\end{proof}

We are now ready to verify the claimed inclusions of Theorem~\ref{T:convYX}.

\eqref{T:convYX:1} $\subset$ \eqref{T:convYX:2}: The implication \ref{T:conv:a} $\Longrightarrow$ \ref{T:conv:g} of Theorem~\ref{T:conv} shows that $\Ecal(X)_{\tau-}>0$ on~\eqref{T:convYX:1}. The desired inclusion now follows from~\eqref{eq:VV}.

\eqref{T:convYX:2} $\subset$ \eqref{T:convYX:1}: By the inclusion \eqref{T:conv2:2} $\subset$ \eqref{T:conv2:1} of Corollary~\ref{C:conv2} and the implication \ref{T:conv:a} $\Longrightarrow$ \ref{T:conv:g} of Theorem~\ref{T:conv}, $X$ converges and $\Ecal(X)_{\tau-}>0$ on \eqref{T:convYX:2}. Hence by \eqref{eq:VV}, $Y$ also converges on \eqref{T:convYX:2}.

\eqref{T:convYX:1}  $\cap$ \eqref{T:convYX:2} $\subset$ \eqref{T:convYX:3}: Obvious.

\eqref{T:convYX:3} $\subset$ \eqref{T:convYX:2}: The inclusion \eqref{T:conv2:1} $\subset$ \eqref{T:conv2:2} of Corollary~\ref{C:conv2} implies $[X^c,X^c]_{\tau-}+(x^2\wedge|x|)*\nu^X_{\tau-}<\infty$ on~\eqref{T:convYX:3}. Since also $-\log(1+x)\oo_{x\le -\eta}*\nu^X_{\tau-} < \infty$ on~\eqref{T:convYX:3} by definition, the desired inclusion follows.

\eqref{T:convYX:1} $\cap$ \eqref{T:convYX:2} $\subset$ \eqref{T:convYX:4}: Obvious.

\eqref{T:convYX:4} $\subset$ \eqref{T:convYX:1}: We need to show that $X$ converges on \eqref{T:convYX:4}. By Theorem~\ref{T:conv} it is sufficient to argue that $[X^c,X^c]_{\tau-}+(x^2\wedge|x|)*\nu^X_{\tau-}<\infty$ on \eqref{T:convYX:4}.  Lemma~\ref{L:convYX:0 new} yields directly that $[X^c,X^c]_{\tau-}< \infty$, so we focus on the jump component. To this end, using that $\int x\nu^X(\{t\},dx)=0$ for all $t<\tau$, we first observe that, for fixed $\varepsilon\in (0,1)$,
\begin{align*}
\gamma_t
&= \int (x-\log(1+x)) \,\nu^X(\{t\},\dd x) \\
&\le \frac{1}{1-\varepsilon} \int x^2\oo_{|x|\le \varepsilon}\,\nu^X(\{t\},\dd x) + \int x\oo_{x>\varepsilon}\,\nu^X(\{t\},\dd x)  +  \int -\log(1+x)\oo_{x<-\varepsilon}\,\nu^X(\{t\},\dd x)
\end{align*}
for all $t<\tau$.
Letting $\Theta_t$ denote the last two terms for each $t < \tau$, Lemma~\ref{L:convYX:0 new} implies that $\sum_{t<\tau}\Theta_t<\infty$, and hence also $\sum_{t<\tau}\Theta_t^2<\infty$, hold on~\eqref{T:convYX:4}. Furthermore, the inequality $(a+b)^2\le 2a^2 + 2b^2$ yields that
\begin{align} \label{eq: sum gamma2}
\sum_{t<\tau_n}  \gamma_t^2
&\le  \frac{2}{(1-\varepsilon)^2} \sum_{t<\tau_n}  \left( \int x^2\oo_{ |x| \le \varepsilon}\,\nu^X(\{t\},dx) \right)^2 + 2 \sum_{t<\tau_n}  \Theta_t^2 
 \le  \frac{2 \varepsilon^2}{(1-\varepsilon)^2}  x^2\oo_{|x|\le \varepsilon} * \nu^X_{\tau_n} + 2 \sum_{t<\tau}  \Theta_t^2
\end{align} 
for all $n \in \N$, where $(\tau_n)_{n\in\N}$ denotes an announcing sequence for~$\tau$.

Also observe that, for all $n \in \N$,
\begin{align*}
\frac{1}{16}  x^2 \oo_{|x|\le \varepsilon} * \nu^X_{\tau_n} 
&\leq (\log(1+x))^2 \oo_{|x|\leq \varepsilon} * \nu^X_{\tau_n} 
\leq 2 (\log(1+x) + \gamma)^2 \oo_{|x| \leq \varepsilon} * \nu^X_{\tau_n} + 2 \sum_{t \leq \tau_n} \gamma_t^2,
\end{align*}
which yields, thanks to \eqref{eq: sum gamma2},
\begin{align*}
	\left(\frac{1}{16} - \frac{4 \varepsilon^2}{(1-\varepsilon)^2} \right) x^2 \oo_{|x|\le \varepsilon} * \nu^X_{\tau_n} \leq 2 (\log(1+x) + \gamma)^2 \oo_{|x| \le \varepsilon} * \nu^X_{\tau_n} + 4 \sum_{t<\tau}  \Theta_t^2.
\end{align*}
Choosing $\varepsilon$ small enough and letting $n$ tend to infinity, we obtain that $x^2 \oo_{|x|\le \varepsilon} * \nu^X_{\tau-} < \infty$ on \eqref{T:convYX:4} thanks to Lemma~\ref{L:convYX:0 new}. The same lemma also yields  $|x| \oo_{|x|\geq \varepsilon} * \nu^X_{\tau-} < \infty$, which concludes the proof.
\qed

\section{Counterexamples}  \label{S:examp}

In this section we collect several examples of local martingales that illustrate the wide range of asymptotic behavior that can occur. This showcases the sharpness of the results in Section~\ref{S:convergence}.

\subsection{Random walk with large jumps}  \label{A:SS:lack}

Choose a sequence $(p_n)_{n \in \N}$ of real numbers such that $p_n\in(0,1)$ and $\sum_{n =1}^\infty p_n < \infty$. Moreover, choose a sequence $(x_n)_{n \in \N}$ of real numbers.
Then let $(\Theta_n)_{n \in \N}$ be a sequence of independent random variables with $\P(\Theta_n = 1) = p_n$ and $\P(\Theta_n = 0) = 1-p_n$ for all $n \in N$.  Now define a process $X$ by
\begin{align*}
	X_t = \sum_{n =1}^{[t]} x_n \left(1 - \frac{\Theta_n}{p_n}\right),
\end{align*}
where $[t]$ is the largest integer less  than or equal to  $t$, and let $\F$ be its natural filtration. Clearly $X$ is a locally bounded martingale. The Borel-Cantelli lemma implies that  $\Theta_n$ is nonzero for only finitely many $n \in \N$, almost surely, whence for all sufficiently large $n \in \N$ we have $\Delta X_n = x_n$. By choosing a suitable sequence $(x_n)_{n \in \N}$ one may therefore achieve essentially arbitrary asymptotic behavior. This construction was inspired by an example due to George Lowther that appeared on his blog Almost Sure on December 20, 2009.

\begin{lemma} \label{L:ex1}
With the notation of this subsection, $X$ satisfies the following properties:
\begin{enumerate}
\item\label{L:ex1:1} $\lim_{t \to \infty}  X_t$ exists in $\R$ if and only if $\lim_{m\to \infty} \sum_{n = 1}^m x_n$ exists in $\R$.
\item\label{L:ex1:3} $(1 \wedge x^2) * \mu^X_{\infty-} < \infty$ if and only if $[X,X]_{\infty-} < \infty$ if and only if $\sum_{n = 1}^\infty x_n^2 < \infty$.
\item\label{L:ex1:2} $X$ is a semimartingale on $[0,\infty]$ if and only if $(x^2\wedge|x|)*\nu^X_{\infty-}<\infty$ if and only if $X$ is a uniformly integrable martingale if and only if  $\sum_{n = 1}^\infty |x_n| < \infty$.
\end{enumerate}
\end{lemma}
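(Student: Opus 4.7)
The plan is to exploit the fact that, by the Borel--Cantelli lemma, $\sum p_n<\infty$ forces $\Theta_n=0$ for all sufficiently large $n$ almost surely. Concretely, there is a random $N\in\N$ such that $\Delta X_n=x_n$ for every $n\ge N$, while for $n<N$ only finitely many (random, but integrable, since $|x_n(1-\Theta_n/p_n)|$ has expectation $2|x_n|(1-p_n)$) terms arise. All three parts reduce to a statement about the tail of the sequence $(x_n)$.

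For \ref{L:ex1:1}, I would write $X_m=X_{N-1}+\sum_{n=N}^m x_n$ for $m\ge N$ and observe that the first summand is a finite random variable, so convergence of $X_m$ is equivalent to convergence of $\sum_{n=1}^m x_n$. For \ref{L:ex1:3}, the same decomposition gives $[X,X]_{\infty-}=\sum_{n<N}x_n^2(1-\Theta_n/p_n)^2+\sum_{n\ge N}x_n^2$ and $(1\wedge x^2)*\mu^X_{\infty-}=\sum_{n<N}(1\wedge(\Delta X_n)^2)+\sum_{n\ge N}(1\wedge x_n^2)$; each is finite iff $\sum x_n^2<\infty$, using the elementary fact that $\sum(1\wedge x_n^2)<\infty$ forces $x_n\to0$ and hence $1\wedge x_n^2=x_n^2$ eventually.

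For \ref{L:ex1:2}, I would set up the implication chain
\[
\textstyle\sum|x_n|<\infty \ \Longrightarrow\ X\text{ is a UI martingale}\ \Longrightarrow\ X\text{ is a semimartingale on }[0,\infty]\ \Longrightarrow\ (x^2\wedge|x|)*\nu^X_{\infty-}<\infty\ \Longrightarrow\ \textstyle\sum|x_n|<\infty.
\]
The first implication follows from $\E\big[\sum_n|x_n(1-\Theta_n/p_n)|\big]=2\sum_n|x_n|(1-p_n)<\infty$, which makes $X$ of integrable total variation and hence dominated in $L^1$, so UI. The second implication is trivial. For the third, since $X$ is a semimartingale on $[0,\infty]$, $X_\infty=\lim_{t\to\infty}X_t$ exists in $\R$, so $\sup_{t\ge0}|X_t|<\infty$; by Lemma~\ref{L:ELI}\ref{L:ELI:3}, $(\Delta X)^-\wedge X^-$ is stationarily locally bounded, and Theorem~\ref{T:conv}\ref{T:conv:a}~$\Longrightarrow$~\ref{T:conv:c} (with $X^c=A=0$) yields $(x^2\wedge|x|)*\nu^X_{\infty-}<\infty$.

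The main obstacle is the last implication. Here I would exploit that $\Theta_n$ is independent of $\Fcal_{n-}$, which identifies the compensator as $\nu^X(\dd t,\dd x)=\sum_{n\in\N}\delta_n(\dd t)F_n(\dd x)$ with $F_n=(1-p_n)\delta_{x_n}+p_n\delta_{y_n}$ and $y_n=-x_n(1-p_n)/p_n$. Using $p_n(y_n^2\wedge|y_n|)=|x_n|(1-p_n)(|y_n|\wedge1)$, I would split the indices into two classes: for $n$ with $|y_n|\ge1$ (equivalently $|x_n|\ge p_n/(1-p_n)$) the $p_n$-contribution to $\int(x^2\wedge|x|)F_n(\dd x)$ is at least $|x_n|/2$ (eventually), while for the remaining indices $|x_n|\le 2p_n$ and summability is automatic from $\sum p_n<\infty$. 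Combining the two cases gives $\sum|x_n|<\infty$ and closes the circle.
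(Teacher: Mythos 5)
Your arguments for \ref{L:ex1:1} and \ref{L:ex1:3} are correct and are exactly the Borel--Cantelli reduction the paper has in mind. Your identification of the compensator as $\nu^X=\sum_n\delta_n\otimes F_n$ with $F_n=(1-p_n)\delta_{x_n}+p_n\delta_{y_n}$, $y_n=-x_n(1-p_n)/p_n$, and the two-case estimate showing $(x^2\wedge|x|)*\nu^X_{\infty-}<\infty\Rightarrow\sum_n|x_n|<\infty$ are also correct, and are more explicit than anything in the paper.

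However, there is a genuine gap in the third link of your chain for \ref{L:ex1:2}, namely ``$X$ is a semimartingale on $[0,\infty]$ $\Longrightarrow$ $(x^2\wedge|x|)*\nu^X_{\infty-}<\infty$.'' You derive it by arguing that $X_\infty$ exists, hence $\sup_{t\ge0}|X_t|<\infty$, hence by Lemma~\ref{L:ELI}\ref{L:ELI:3} the process $(\Delta X)^-\wedge X^-$ is stationarily locally bounded. That inference is false: Lemma~\ref{L:ELI}\ref{L:ELI:3} requires stationarily local integrability of $\Delta X$ as a \emph{hypothesis} to pass from pathwise boundedness to stationary localization (only for \emph{predictable} processes, as in Lemma~\ref{L:ELI}\ref{L:ELI:4}, does pathwise finiteness alone suffice). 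Indeed, for $x_n=(-1)^n/n$ the process $X$ converges and is pathwise bounded, yet $(\Delta X)^-\wedge X^-$ cannot be stationarily locally integrable --- otherwise Theorem~\ref{T:conv}\ref{T:conv:a}$\Rightarrow$\ref{T:conv:c} would give $\sum_n|x_n|<\infty$ by your own fourth step, contradicting $\sum_n 1/n=\infty$. So your argument only uses convergence and boundedness of $X$, which is strictly weaker than the semimartingale property on $[0,\infty]$ and demonstrably insufficient. The paper closes the loop differently: assuming $\sum_n|x_n|=\infty$, it integrates the bounded predictable sign process $H=\sum_n(\oo_{x_n>0}-\oo_{x_n<0})\oo_{\lc n\rc}$ against $X$ and observes that $H\cdot X$ diverges like $\sum_n|x_n|$ by Borel--Cantelli, so $X$ cannot be a semimartingale on $[0,\infty]$; the implication $\sum_n|x_n|<\infty\Rightarrow(x^2\wedge|x|)*\nu^X_{\infty-}<\infty$ is then obtained from the uniform integrability via Proposition~II.2.29 of Jacod--Shiryaev (or, in your notation, directly from $\int|x|\,F_n(\dd x)=2|x_n|(1-p_n)$). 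Replacing your third arrow by this contrapositive argument repairs the proof.
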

\begin{proof}
The statements in \ref{L:ex1:1} and \ref{L:ex1:3} follow from the Borel-Cantelli lemma. For \ref{L:ex1:2}, note that $|X_t| \le \sum_{n\in\N} |x_n|  (1 + \Theta_n/p_n)$ for all $t\ge0$. Since
\[
\E\left[\sum_{n = 1}^\infty |x_n|  \left(1 + \frac{\Theta_n}{p_n}\right)\right]=2\sum_{n = 1}^\infty |x_n|,
\]
the condition $\sum_{n = 1}^\infty |x_n| < \infty$ implies that $X$ is a uniformly integrable martingale, which implies that~$X$ is a special semimartingale on $[0,\infty]$, or equivalently that $(x^2\wedge|x|)*\nu^X_{\infty-}<\infty$ (see Proposition~II.2.29 in~\citet{JacodS}), which implies that $X$ is a semimartingale on $[0,\infty]$. It remains to show that this implies $\sum_{n = 1}^\infty |x_n| < \infty$. We prove the contrapositive, and assume $\sum_{n = 1}^\infty |x_n| = \infty$. Consider the bounded predictable process $H = \sum_{n=1}^\infty (\oo_{x_n>0} - \oo_{x_n<0}) \oo_{\lc n\rc}$. If $X$ were a semimartingale on $[0,\infty]$, then $(H\cdot X)_{\infty-}$ would be well-defined and finite. However, by Borel-Cantelli, $H\cdot X$ has the same asymptotic behavior as $\sum_{n=1}^\infty |x_n|$ and thus diverges. Hence $X$ is not a semimartingale on~$[0,\infty]$.
\end{proof}

Martingales of the above type can be used to illustrate that much of Theorem~\ref{T:conv} and its corollaries fails if one drops stationarily local integrability of $(\Delta X)^-\wedge X^-$. We now list several such counterexamples.

\begin{example} \label{E:P1}
We use the notation of this subsection.
\begin{enumerate}
\item \label{E:P1:1} Let $x_n = (-1)^n/\sqrt{n}$ for all $n \in \N$. Then
\[
\P\Big(\lim_{t \to \infty} X_t \textnormal{ exists in $\R$}\Big)  = \P\left([X,X]_{\infty-} = \infty\right) =  \P\left(x^2 \oo_{|x|<1}* \nu^X_{\infty-} = \infty\right) = 1.
\]
Thus the implications \ref{T:conv:a} $\Longrightarrow$ \ref{T:conv:c} and \ref{T:conv:a} $\Longrightarrow$ \ref{T:conv:f} in Theorem~\ref{T:conv} fail without the integrability condition on $(\Delta X)^-\wedge X^-$. Furthermore, by setting $x_1=0$ but leaving  $x_n$ for all $n \geq 2$ unchanged, and ensuring that $p_n\ne x_n/(1+x_n)$ for all $n \in \N$, we have $\Delta X\ne-1$. Thus, $\Ecal(X)_t=\prod_{n=1}^{[t]} (1+\Delta X_n)$ is nonzero for all $t$. Since $\Delta X_n=x_n$ for all sufficiently large $n \in \N$, $\Ecal(X)$ will eventually be of constant sign. Moreover, for any $n_0\in\N$ we have
\[
\lim_{m\to\infty}\sum_{n=n_0}^m \log(1+x_n)\le \lim_{m\to\infty}\sum_{n=n_0}^m \left(x_n - \frac{x_n^2}{4}\right)=-\infty.
\]
It follows that $\P( \lim_{t\to\infty}\Ecal(X)_t=0) = 1$, showing that the implication \ref{T:conv:a} $\Longrightarrow$ \ref{T:conv:g} in Theorem~\ref{T:conv} fails without the integrability condition on $(\Delta X)^-\wedge X^-$.

\item  Part~\ref{E:P1:1} illustrates that the implications \ref{T:conv:a'} $\Longrightarrow$ \ref{T:conv:c},  \ref{T:conv:a'} $\Longrightarrow$ \ref{T:conv:f}, and \ref{T:conv:a'} $\Longrightarrow$ \ref{T:conv:g} in Theorem~\ref{T:conv} fail without the integrability condition on $(\Delta X)^-\wedge X^-$.
We now let $x_n = 1$ for all $n \in \N$. Then $\P(\lim_{t \to \infty} X_t = \infty)  = 1$, which illustrates that also \ref{T:conv:a'} $\Longrightarrow$ \ref{T:conv:a} in that theorem fails without integrability condition.

\item We now fix a sequence $(x_n)_{n \in \N}$ such that $|x_n| = 1/n$  but $g: m \mapsto \sum_{n = 1}^m x_n$ oscillates with $\liminf_{m \to \infty} g(m) = -\infty$ and  $\limsup_{m \to \infty} g(m) = \infty$.  This setup illustrates that \ref{T:conv:f} $\Longrightarrow$ \ref{T:conv:a} and \ref{T:conv:f} $\Longrightarrow$ \ref{T:conv:a'}  in Theorem~\ref{T:conv}  fail without the integrability condition on $(\Delta X)^- \wedge X^-$.  Moreover, by Lemma~\ref{L:ex1}\ref{L:ex1:2} the implication \ref{T:conv:f} $\Longrightarrow$ \ref{T:conv:c} fails without the additional integrability condition. The same is true for the implication \ref{T:conv:f} $\Longrightarrow$ \ref{T:conv:g}, since $\log \mathcal{E}(X) \leq X$.

\item Let $x_n=e^{(-1)^n/\sqrt{n}}-1$ and suppose $p_n\ne x_n/(1+x_n)$ for all $n \in \N$ to ensure $\Delta X\ne-1$. Then
\[
\P\Big( \lim_{t\to\infty}\Ecal(X)_t \textnormal{ exists in }\R\setminus\{0\}\Big) = \P\Big(\lim_{t \to \infty} X_t = \infty\Big) = \P\Big([X,X]_{\infty-}= \infty\Big) = 1.
\]
Indeed, $\lim_{m\to\infty}\sum_{n=1}^m \log(1+x_n)=\lim_{m\to\infty}\sum_{n=1}^m (-1)^n/\sqrt{n}$ exists in $\R$, implying that $\Ecal(X)$ converges to a nonzero limit. Moreover,
\[
\lim_{m\to\infty}\sum_{n=1}^m x_n \ge \lim_{m\to\infty}\sum_{n=1}^m \left(\frac{(-1)^n}{\sqrt{n}}+\frac{1}{4n}\right)=\infty,
\]
whence $X$ diverges. Since $\sum_{n=1}^\infty x_n^2 = \infty$, we obtain that $[X,X]$ also diverges. Thus the implications \ref{T:conv:g} $\Longrightarrow$ \ref{T:conv:a} and \ref{T:conv:g} $\Longrightarrow$ \ref{T:conv:f} in Theorem~\ref{T:conv} fail without the integrability condition on $(\Delta X)^-\wedge X^-$. So does the implication \ref{T:conv:g} $\Longrightarrow$ \ref{T:conv:c} due to Lemma~\ref{L:ex1}\ref{L:ex1:2}. Finally, note that the implication \ref{T:conv:g} $\Longrightarrow$ \ref{T:conv:a'} holds independently of any integrability conditions since $\log \mathcal{E}(X) \leq X$.

\item Let $x_n=-1/n$ for all $n\in\N$. Then $[X,X]_{\infty-}<\infty$ and $(\Delta X)^-$ is stationarily locally integrable, but $\lim_{t\to\infty}X_t=-\infty$. This shows that the condition involving limit superior is needed in Theorem~\ref{T:conv}\ref{T:conv:f}, even if $X$ is a martingale. We further note that if $X$ is Brownian motion, then $\limsup_{t\to\infty}X_t>-\infty$ and $(\Delta X)^-=0$, but $[X,X]_{\infty-}=\infty$. Thus some condition involving the quadratic variation is also needed in Theorem~\ref{T:conv}\ref{T:conv:f}.

\item Note that choosing $x_n= (-1)^n/n$ for each $n\in \N$ yields a locally bounded martingale~$X$ with $[X,X]_{\infty-}<\infty$, $X_\infty = \lim_{t \to \infty} X_t$ exists, but $X$ is not a semimartingale on $[0,\infty]$.   This contradicts statements in the literature which assert that a semimartingale that has a limit is a semimartingale on the extended interval.  This example also illustrates that the implications \ref{T:conv1:a} $\Longrightarrow$ \ref{C:conv001:d} and \ref{T:conv1:a} $\Longrightarrow$ \ref{C:conv001:e}
in Corollary~\ref{C:conv001} fail without additional integrability condition. For the sake of completeness, Example~\ref{ex:semimartingale} below illustrates that the integrability condition in Corollary~\ref{C:conv001}\ref{C:conv001:e} is not redundant either. \qed
\end{enumerate}
\end{example}

\begin{remark}
Many other types of behavior can be generated within the setup of this subsection. For example, by choosing the sequence $(x_n)_{n \in \N}$ appropriately we can obtain a martingale $X$ that converges nowhere, but satisfies $\P(\sup_{t\geq 0} |X_t| < \infty)=1$. We can also choose $(x_n)_{n \in \N}$ so that, additionally, either $\P([X,X]_{\infty-} = \infty)=1$ or $\P([X,X]_{\infty-} <\infty)=1$.
\qed
\end{remark}

\begin{example} \label{ex:ui}
The assumption in Corollary~\ref{C:convYX2} \ref{T:convYX2:b} cannot be weakened to $L^1$--boundedness. To see this, within the setup of this subsection, let $x_n=-1/2$ for all $n \in \N$. Then $\Delta X\ge-1/2$. Moreover,  we claim that the sequence $(p_n)_{n\in\N}$ can be chosen so that
\begin{align}  \label{ex:ui:eq1}
	\sup_{\sigma\in\Tcal} \E\left[ e^{c\log(1+x) * (\mu^X-\nu^X)_\sigma}  \right] < \infty
\end{align}
for each $c<1$,
while, clearly, $\P(\lim_{t \to \infty} X_t = -\infty)=1$. This shows that the implication \ref{T:convYX2:b} $\Longrightarrow$ \ref{T:convYX2:a} in Corollary~\ref{C:convYX2}, with $c<1$, fails without the tail condition on $\nu^X$.

To obtain~\eqref{ex:ui:eq1}, note that $Y=\log(1+x) * (\mu^X-\nu^X)$ is a martingale, so that $e^{cY}$ is a submartingale, whence $\E[e^{cY_\sigma}]$ is nondecreasing in~$\sigma$. Since the jumps of $X$ are independent, this yields
\begin{align*}
	\sup_{\sigma\in\Tcal}  \E\left[ e^{c \log(1+x) * (\mu^X-\nu^X)_\sigma}  \right]
\le  \prod_{n=1}^\infty \E\left[ (1+\Delta X_n)^c \right] e^{-c\,\E[\log(1+\Delta X_n)]} =: \prod_{n=1}^\infty e^{\kappa_n}.
\end{align*}
We have $\kappa_n\ge0$ by Jensen's inequality, and a direct calculation yields
\begin{align*}
\kappa_n =& \log\E\left[ (1+\Delta X_n)^{c} \right] - c\,\E[\log(1+\Delta X_n)] 
\le \log\left( 2 p_n^{1-c} +1\right) - c\,p_n \log(1+p_n^{-1})
\end{align*}
for all $c <1$.
Let us now fix a sequence $(p_n)_{n \in \N}$ such that the following inequalities hold for all $n \in \N$:
\begin{align*}
	p_n \log(1+p_n^{-1})  \leq \frac{1}{n^3} \qquad \textnormal{and} \qquad
	p_n \leq \frac{1}{2^n}\left( e^{n^{-2}}  -1\right)^n. 
\end{align*}
This is always possible. Such a sequence satisfies $\sum_{n\in\N}p_n<\infty$ and results in $\kappa_n \leq 2/n^2$ for all $n \geq (-c) \vee (1/(1-c))$, whence $\sum_{n\in\N}\kappa_n<\infty$. This yields the assertion.
\qed
\end{example}

\subsection{Quasi-left continuous one-jump martingales}  \label{A:SS:one}

We now present examples based on a martingale $X$ which, unlike in Subsection~\ref{A:SS:lack}, has one single jump that occurs at a totally inaccessible stopping time. In particular, the findings of Subsection~\ref{A:SS:lack} do not rely on the fact that the jump times there are predictable.

Let $\lambda, \gamma:\mathbb R_+\to\mathbb R_+$ be two continuous nonnegative functions. Let $\Theta$ be a standard exponential random variable and define $\rho = \inf\{t\ge 0: \int_0^t\lambda(s) ds \ge \Theta\}$. Let $\F$ be the filtration generated by the indicator process $\oo_{\lc\rho,\infty\lc}$, and define a process $X$ by
\[
X_t = \gamma(\rho)\1{\rho\le t} - \int_0^t \gamma(s)\lambda(s)\1{s<\rho}\dd s.
\]
Note that $X$ is the integral of $\gamma$ with respect to $\1{\rho\le t}-\int_0^{t\wedge\rho}\lambda_s\dd s$ and is  a martingale. Furthermore, $\rho$ is totally inaccessible. This construction is sometimes called the {\em Cox construction}. Furthermore, the jump measure $\mu^X$ and corresponding compensator $\nu^X$ satisfy
\[
F*\mu^X = F(\rho,\gamma(\rho))\oo_{\lc \rho, \infty\lc}, \qquad
F*\nu^X_t = \int_0^{t\wedge\rho}F(s,\gamma(s))\lambda(s) \dd s
\]
for all $t \geq 0$,
where $F$ is any nonnegative predictable function. We will study such martingales when $\lambda$ and $\gamma$ posses certain integrability properties, such as the following:
\begin{align}
&\int_0^\infty\lambda(s)\dd s <\infty;   \label{A:eq:int1}\\
&\int_0^\infty\gamma(s)\lambda(s)\dd s =\infty;  \label{A:eq:int2}\\
&\int_0^\infty (1+\gamma(s))^c\lambda(s) \dd s <\infty \quad \textnormal{ for all }c<1.  \label{A:eq:int3}
\end{align}
For instance, $\lambda(s) = 1/(1+s)^2$ and $\gamma(s)=s$ satisfy all three properties.

\begin{example} \label{E:2}
The limit superior condition in Theorem~\ref{T:conv} is essential, even if $X$ is a local martingale. Indeed, with the notation of this subsection, let $\lambda$ and $\gamma$ satisfy \eqref{A:eq:int1} and \eqref{A:eq:int2}. Then
\begin{align*}
&\P\left([X,X]_{\infty-} +  (x^2\wedge 1) * \nu^X_{\infty-}  < \infty\right) = \P\Big(\sup_{t\ge 0} X_t < \infty\Big)=1; \\
&\P \Big(\limsup_{t\to \infty} X_t = - \infty\Big) > 0.
\end{align*}
This shows that finite quadratic variation does not prevent a martingale from diverging; in fact, $X$ satisfies $\{[X,X]_{\infty-} =0\} = \{\limsup_{t\to \infty}X_t = - \infty\}$. The example also shows that one cannot replace $(x^2\wedge |x|)*\nu^X_{\infty-}$ by $(x^2\wedge 1) * \nu^X_{\infty-}$ in \eqref{T:conv2:2}. Finally, it illustrates in the quasi-left continuous case that diverging local martingales need not oscillate, in contrast to continuous local martingales.

To prove the above claims, first observe that $[X,X]_{\infty-} = \gamma(\rho)^2 \1{\rho<\infty} < \infty$ and $\sup_{t\ge 0} X_t\le\gamma(\rho)\1{\rho<\infty}<\infty$ almost surely. Next, we get $\P(\rho=\infty) = \exp({-\int_0^\infty\lambda(s) \dd s})>0$ in view of~\eqref{A:eq:int1}. We conclude by observing that $\lim_{t \to \infty} X_t = - \lim_{t \to \infty}  \int_0^t \gamma(s)\lambda(s) \dd s = -\infty$ on the event $\{\rho=\infty\}$ due to~\eqref{A:eq:int2}.
\qed
\end{example}

\begin{example}
Example~\ref{E:2} can be refined to yield a martingale with a single positive jump, that diverges without oscillating, but has infinite quadratic variation. To this end, extend the probability space to include a Brownian motion $B$ that is independent of~$\Theta$, and suppose $\F$ is generated by $(\oo_{\lc\rho,\infty\lc},B)$. The construction of $X$ is unaffected by this. In addition to \eqref{A:eq:int1} and \eqref{A:eq:int2}, let $\lambda$ and $\gamma$ satisfy
\begin{equation} \label{eq:E:3:1}
\lim_{t\to\infty} \frac{\int_0^t \gamma(s)\lambda(s) \dd s }{\sqrt{2 t \log \log t}} = \infty.
\end{equation}
For instance, take $\lambda(s)=1/(1+s)^2$ and $\gamma(s)=1/\lambda(s)$. Then the martingale $X' = B + X$ satisfies
\begin{equation} \label{eq:E:3:2}
\P\left([X',X']_{\infty-} = \infty\right)= 1 \qquad\textnormal{and}\qquad \P\Big(\sup_{t\ge 0} X'_t < \infty\Big) > 0,
\end{equation}
so that, in particular, the inclusion $\{ [X',X']_{\infty-}=\infty\} \subset \{\sup_{t\ge 0} X'_t=\infty\}$ does not hold in general. 

To prove~\eqref{eq:E:3:2}, first note that $[X',X']_{\infty-} \geq [B,B]_{\infty-} = \infty$. Next, \eqref{eq:E:3:1} and the law of the iterated logarithm yield, on the event $\{\rho=\infty\}$,
\[
\limsup_{t \to \infty} X'_t = \limsup_{t \to \infty} \Big(B_t - \int_0^t \gamma(s)\lambda(s) \dd s\Big)  \leq \limsup_{t \to \infty} \Big(2 \sqrt{2 t \log \log t}  - \int_0^t \gamma(s)\lambda(s) \dd s\Big) = -\infty.
\]
Since $\P(\rho=\infty)>0$, this implies $\P(\sup_{t\geq 0} X'_t < \infty)>0$.
\qed
\end{example}

\begin{example} \label{ex:semimartingale}
The semimartingale property does not imply that $X^- \wedge (\Delta X)^-$ is stationarily locally integrable. With the notation of this subsection, let $\lambda$ and $\gamma$ satisfy \eqref{A:eq:int1} and \eqref{A:eq:int2}, and consider the process $\widehat{X} = -\gamma(\rho) \oo_{\lc \rho, \infty\lc}$, which is clearly a semimartingale on $[0,\infty]$. On $[0,\infty)$, it has the special decomposition $\widehat{X} = \widehat{M} - \widehat{A}$, where $\widehat{M} = -X$ and $\widehat{A} = \int_0 \gamma(s) \lambda(s) \oo_{\{s < \rho\}} \dd s$. We have $\P(\widehat A_\infty = \infty) > 0$, and thus, by Corollary~\ref{C:conv2} we see that the integrability condition in Corollary~\ref{C:conv001}\ref{C:conv001:e} is non-redundant. This example also illustrates that~\eqref{eq:XMA} does not hold in general.
\qed
\end{example}

\begin{example}\label{ex:6.8}
Also in the case where $X$ is quasi-left continuous, the uniform integrability assumption in Corollary~\ref{C:convYX2} cannot be weakened to $L^1$--boundedness. We again put ourselves in the setup of this subsection and suppose $\lambda$ and $\gamma$ satisfy \eqref{A:eq:int1}--\eqref{A:eq:int3}. Then, while $X$ diverges with positive probability, it nonetheless satisfies
\begin{align}  \label{A:eq:prop6.1}
	\sup_{\sigma\in\Tcal} \E\left[ e^{c\log(1+x) * (\mu^X-\nu^X)_\sigma }\right] < \infty
\end{align}
for all $c<1$.
Indeed, if $c \leq 0$, then $$e^{c\log(1+x) * (\mu^X-\nu^X)_\sigma} \leq e^{|c| \log(1+x) * \nu^X_\rho} \leq e^{|c| \int_0^\infty \log(1+s)  \lambda(s) \dd s} < \infty $$ for all $\sigma \in \Tcal$. 
If $c \in (0,1)$, the left-hand side of~\eqref{A:eq:prop6.1} is bounded above by
\begin{align*}
\sup_{\sigma \in \mathcal T} \E\left[e^{c\log(1+x)*\mu^X_\sigma}\right]
&\le 1 + \sup_{\sigma \in \mathcal T} \E\left[(1+\gamma(\rho))^c\,\1{\rho\le\sigma}\right] 
\le 1 + \E\left[(1+x)^c*\mu^X_{\infty}\right] \\
&=1 + \E\left[(1+x)^c*\nu^X_{\infty}\right] \le 1 +  \int_0^\infty(1+\gamma(s))^c\lambda(s)\dd s < \infty,
\end{align*}
due to \eqref{A:eq:int3}. 
\qed
\end{example}

\nocite{Stricker_1981}

\setlength{\bibsep}{0.0pt}
\bibliography{aa_bib}{}
\bibliographystyle{apalike}

\end{document}